\definecolor{darkgreen}{rgb}{0,0.45,0} 
\definecolor{lightgrey}{rgb}{0.666666,0.666666,0.666666}
\renewcommand{\epsilon}{\varepsilon}
\renewcommand{\phi}{\varphi}
\newtheorem{theo}{Theorem}[section]
\newtheorem{lemma}[theo]{Lemma}
\newtheorem{propo}[theo]{Proposition}
\newtheorem{coro}[theo]{Corollary}
\theoremstyle{definition}
\newtheorem{defi}[theo]{Definition}
\theoremstyle{remark}
\newtheorem{rem}[theo]{Remark}
\newtheorem{exam}[theo]{Example}
\newcommand\Inj{\operatorname{Inj}}
\newcommand\Set{\operatorname{\bf Set}}
\newcommand\SSet{\ensuremath{\operatorname{\bf SSet}}\xspace}
\newcommand\Lex{\operatorname{\bf Lex}}
\newcommand\Cat{\ensuremath{\operatorname{\bf Cat}}\xspace}
\newcommand\Gpd{\ensuremath{\operatorname{\bf Gpd}}\xspace}
\newcommand\vcat{\ensuremath{\operatorname{\bf \cv-Cat}}\xspace}
\newcommand\Twocat{\ensuremath{\operatorname{\bf 2-Cat}}\xspace}
\DeclareMathOperator\ev{ev}
\DeclareMathOperator\Ho{Ho}
\DeclareMathOperator\ho{ho}
\DeclareMathOperator\Hor{Hor}
\DeclareMathOperator\Int{Int}
\DeclareMathOperator\colim{colim}
\DeclareMathOperator\hocolim{hocolim}
\newcommand\ca{\ensuremath{\mathcal {A}}\xspace}
\newcommand\cb{\ensuremath{\mathcal {B}}\xspace}
\newcommand\cc{\ensuremath{\mathcal {C}}\xspace}
\newcommand\cd{\ensuremath{\mathcal {D}}\xspace}
\newcommand\cf{\ensuremath{\mathcal {F}}\xspace}
\newcommand\cg{\ensuremath{\mathcal {G}}\xspace}
\newcommand\ch{\mathcal {H}}
\newcommand\ci{\ensuremath{\mathcal {I}}\xspace}
\newcommand\cj{\ensuremath{\mathcal {J}}\xspace}
\newcommand\ck{\ensuremath{\mathcal {K}}\xspace}
\newcommand\cl{\ensuremath{\mathcal {L}}\xspace}
\newcommand\cm{\ensuremath{\mathcal {M}}\xspace}
\newcommand\cn{\ensuremath{\mathcal {N}}\xspace}
\newcommand\cw{\ensuremath{\mathcal {W}}\xspace}
\newcommand\cx{\ensuremath{\mathcal {X}}\xspace}
\newcommand\cv{\ensuremath{\mathcal {V}}\xspace}
\newcommand{\ox}{\otimes}
\newcommand{\op}{^{\textnormal{op}}}
\newcommand{\fib}{_{\textnormal{fib}}}
\newcommand{\cof}{_{\textnormal{cof}}}
\renewcommand{\tilde}{\widetilde}
\newcommand{\dhom}{\ensuremath{\mathbb{H}}\xspace}
\date{April 27, 2015}
\begin{document}
\title[Homotopy locally presentable enriched categories]
{Homotopy locally presentable enriched categories}

\author{Stephen Lack}
\address{Department of Mathematics, Macquarie University NSW 2109, Australia}
\email{steve.lack@mq.edu.au}

\author{Ji\v r\'i Rosick\'{y}}
\address{Department of Mathematics and Statistics, Masaryk University, Faculty of Sciences, Kotl\'{a}\v{r}sk\'{a} 2, 611 37 Brno, Czech Republic}
\email{rosicky@math.muni.cz}

\thanks{Lack gratefully acknowledges the support of the Australian Research Council Discovery Grant DP130101969 and an ARC Future Fellowship. Rosick\'y gratefully acknowledges the support of MSM 0021622409 and GA\v CR 210/11/0528.}

\begin{abstract}
We develop a homotopy theory of categories enriched in a monoidal model category \cv. In particular, we deal with homotopy weighted limits and colimits, and homotopy local presentability. The main result, which was known for simplicially-enriched categories, links homotopy locally presentable $\cv$-categories with combinatorial model $\cv$-categories, in the case where all objects of \cv are cofibrant.
\end{abstract}
\keywords{monoidal model category, enriched model category, weighted homotopy colimit}

\maketitle
 
\section{Introduction}
There is a fruitful interaction between enriched category theory and homotopy theory, of which the classical case is simplicial homotopy theory: the homotopy theory of simplicial model categories. Moreover, since Dwyer-Kan equivalences provide a suitable notion of weak equivalence between simplicial categories, one can develop a homotopy theory of simplicial categories. In fact there is a model category structure on the category of small simplicial categories, in which the weak equivalences are the Dwyer-Kan equivalences \cite{Be}. This model category is Quillen equivalent to the model category of small quasi-categories \cite{Be1}. Dwyer-Kan equivalences and fibrations also make sense for large simplicial categories, and fibrant simplicial categories correspond to quasi-categories. In particular, the homotopy locally presentable simplicial categories introduced in \cite{R1} correspond to the locally presentable quasi-categories of \cite{L}, in the sense of the following result from \cite{R1}: a  fibrant simplicial category $\ck$ is homotopy locally presentable if and only if it admits a Dwyer-Kan equivalence to the simplicial category $\Int\cm$ of cofibrant and fibrant objects in a combinatorial simplicial model category $\cm$.  There is a gap in the proof in \cite{R1} which we correct by assuming the large cardinal axiom called Vop\v enka's principle (see \cite{AR}). 

Dwyer-Kan equivalences and fibrations can be defined for $\cv$-categories over any monoidal model category $\cv$, and so one can ask whether there is a corresponding model category structure on the category \vcat of all (small) \cv-categories. This question has been studied by various authors under various hypotheses \cite{BM,L,M}; also particular examples have been studied, such as $\cv=\SSet$ \cite{Be}, $\cv=\Cat$ \cite{La2}, and $\cv=\Twocat$ \cite{La3}.

The aim of our paper is to introduce homotopy locally presentable $\cv$-categories, and to give a characterization analogous to that in the case of simplicial categories.
As in the simplicial case, we need Vop\v enka's principle for this.

Just as the definition of (enriched) locally presentable categories \cite{K} involves (weighted) limits and colimits, the definition of {\em homotopy} locally presentable categories involves weighted {\em homotopy} limits and colimits. We define these as weighted limits or colimits whose weight is cofibrant in the projective model structure. This emerges from a classical calculation of homotopy limits and colimits in simplicial model categories; see  \cite{H} for example.

In what follows, $\cv$ will be a monoidal model category in the sense used in \cite{L}; in particular this means that the unit object $I$ is cofibrant, rather than the weaker condition introduced in \cite{Ho}. For such a \cv, there is a notion of model \cv-category, as defined in \cite{Ho}. We also suppose that \cv is cofibrantly generated. We further suppose that \cv is locally presentable as a closed category, in the sense of \cite{K}. For such a \cv, there is a notion of locally presentable \cv-category; see \cite{K} again. Whenever we need the projective model category structure on $[\cd,\cv]$, we have to assume either that $\cv$ satisfies the monoid axiom of \cite{SS}, or that $\cd$ is locally cofibrant, in the sense that all its hom-objects are cofibrant in \cv: see \cite[24.4]{S}. Finally, we also need to suppose that there is a cofibrant replacement functor $Q\colon\cv\to\cv$ which is enriched. But in fact this last assumption, together with the earlier assumption that the unit is cofibrant, already implies that {\em all} objects of \cv are cofibrant---see Proposition~\ref{prop:all-cofibrant}---and in this case \cd is automatically locally cofibrant, and indeed the monoid axiom follows from the assumption that \cv is a monoidal model category. 

Thus we may summarize our assumptions by saying that \cv is a combinatorial monoidal model category in which all objects are cofibrant.

Since the assumption that all objects are cofibrant is very strong, perhaps we should discuss briefly why it is needed. (This assumption was also made in \cite[Appendix~A]{L} in constructing a model structure on \vcat.) A key aspect in the theory of (enriched) locally presentable categories is that given a \cv-category \ck and a full subcategory \cg, there is an induced \cv-functor $\tilde{J}\colon\ck\to[\cg\op,\cv]$ sending an object $A\in\ck$ to the presheaf $\ck(J-,A)\colon\cg\op\to\cv$, where $J\colon\cg\to\ck$ is the inclusion. If \ck is cocomplete and \cg is closed in \ck under finite colimits, then $\tilde{J}$ will land in the locally finitely presentable category $\cm=\Lex(\cg\op,\cv)$, and one can now characterize when $\tilde{J}\colon\ck\to\cm$ is an equivalence. 

In the homotopy context, we want to replace $\tilde{J}\colon\ck\to[\cg\op,\cv]$ by a \cv-functor $\ck\to\Int[\cg\op,\cv]$ landing in the full subcategory of $[\cg\op,\cv]$ consisting of the fibrant and cofibrant objects. Since the hom-objects of \ck will be assumed to be fibrant, certainly the values of $\tilde{J}$ are fibrant, but there is no reason in general why they should be cofibrant. To rectify this, we compose $\tilde{J}$ with a cofibrant replacement functor $Q\colon[\cg\op,\cv]\fib\to\Int[\cg\op,\cv]$, but of course this $Q$ should itself be a \cv-functor. It is not hard to use an enriched form of the small object argument to construct such a \cv-functor $Q$, provided that there exists a cofibrant replacement \cv-functor $\cv\to\cv$. In an appendix to the paper, we sketch how this enriched small-object argument goes (see also \cite[24.2]{S}), as well as giving the argument, referred to above, that the existence of such a \cv-functor $Q$ along with the assumption that the unit $I$ is cofibrant implies that all objects are cofibrant. 

Although our assumptions on \cv are strong, there are nonetheless quite a few examples. Of course the classical example is \SSet. Another key example is \Cat, with the natural/categorical model structure. If $R$ is a Frobenius ring which is also a finite dimensional Hopf algebra over a field, then the category of $R$-modules with the stable model structure is an example. Another example is the category of chain complexes of comodules over a commutative Hopf algebra defined over a field, equipped with the projective model structure. All of these are described in \cite{Ho}.

An example closely related to \Cat is the cartesian closed model category \Gpd of small groupoids. The locally finitely presentable category of non-negatively graded chain complexes over a field also has a cofibrantly generated model structure in which all objects are cofibrant: a straightforward modification of the proof of \cite[Proposition~4.2.13]{Ho} shows that this is a monoidal model category under the usual tensor product.
Another source of examples is provided by {\em Cisinski model categories} \cite{C}: these are model structures on a topos, in which the cofibrations are the monomorphisms and so in particular all objects are cofibrant. Toposes are cartesian closed, and a Cisinski model category will often be a monoidal model category with respect to the cartesian monoidal structure. (The compatibility condition between monoidal structure and cofibrations always holds.)

The recent paper \cite{GuillouMay-enrichedhomotopy} also studies enrichment in homotopical settings. They characterize enriched model categories which are Quillen equivalent to enriched presheaf categories with respect to the projective model structure. Their context is more general --- in particular they do not need to require all objects of \cv to be cofibrant --- but the problem is less general, since they consider only presheaf categories rather than locally presentable ones). In particular, Theorem~\ref{thm:GuillouMay-comparison} shows that our (enriched) homotopy locally presentable categories correspond to left Bousfield localizations of presheaf categories (with the projective model structure). The paper  \cite{GuillouMay-enrichedhomotopy} also contains many general facts about enrichment in the homotopy-theoretic context, and copious references to earlier work. 

\tableofcontents


\section{Review of enriched categories}
\label{sect:review}

\subsection*{Notation}

For a morphism $f\colon X\to Y$ in a \cv-category \ck, composition with $f$ induces maps 
$$\xymatrix @R1pc {
\ck(U,X) \ar[r]^{\ck(U,f)} & \ck(U,Y) & 
\ck(Y,V) \ar[r]^{\ck(f,V)} & \ck(X,V) }$$
in \cv. In order to save space, we shall sometimes call these $f_*$ and $f^*$, respectively, if we allow ourselves to think that the context makes clear what the domains and codomains are.

For a \cv-functor $F\colon \ck\to\cl$ and objects $X,Y\in\ck$, there is an induced morphism 
$$\xymatrix{
\ck(X,Y) \ar[r]^-{F_{X,Y}} & \cl(FX,FY) }$$
which we shall sometimes simply call $F$. A \cv-functor $F\colon \ck\to\cl$ is sometimes called a {\em diagram} in \cl of shape \ck, especially if \ck is small.

\subsection*{Enriched categories and ordinary categories}

If \ck is a \cv-category, we write $\ck_0$ for the underlying ordinary category with the same objects as \ck but with morphisms from $X$ to $Y$ given by morphisms $I\to \ck(X,Y)$ in \cv. The assignment $\ck\mapsto \ck_0$ defines a 2-functor from \cv-categories to categories; this has a left adjoint sending the ordinary category \cx to the \cv-category $\overline{\cx}$ with the same objects as \cx and with hom-object $\overline{\cx}(X,Y)$ given by the copower $\cx(X,Y)\cdot I$; that is, the coproduct of $\cx(X,Y)$ copies of $I$.

By the universal property of the adjunction, if \cx is an ordinary category and \ck a \cv-category, then any ordinary functor $S\colon\cx\to\ck_0$ extends to a unique \cv-functor $\overline{S}\colon\overline{\cx}\to\ck$. In particular, 
for an ordinary category \cx, we have the ordinary functor $\cx\op\to\cv_0$ constant at the unit object $I$ of \cv, and this extends to a \cv-functor $\Delta I\colon\overline{\cx}\op\to\cv$. Limits or colimits weighted by $\Delta I$ are called {\em conical} limits or colimits. 

\subsection*{Limits and colimits}

A {\em weight} is a presheaf $\cd\op\to\cv$ on a small \cv-category. 
Given a $\cv$-category $\ck$, a diagram $S\colon\cd\to\ck$, a weight $G\colon\cd\op\to\cv$, and an object $C\in\ck$, we may form the presheaf $\ck(S,C)\colon\cd\op\to\cv$ which sends an object $D\in\cd$ to the \cv-valued hom $\ck(SD,C)$. 

The Yoneda lemma provides a bijection between morphisms $\delta\colon G\to\ck(S,C)$ in $[\cd\op,\cv]$ and morphisms $\beta\colon \ck(C,-)\to[\cd\op,\cv](G,\ck(S,-))$ in $[\ck,\cv]$. We say that $\delta$ exhibits $C$ as the (weighted) colimit $G*S$ if the corresponding $\beta$ is invertible. 

A special case is where $\cd$ is the unit \cv-category \ci consisting of a single object $0$ with $\ci(0,0)=I$; then a diagram $S\colon\ci\to\ck$ just consists of an object $S\in\ck$, while a weight consists of an object $X\in\cv$. The corresponding weighted colimit, usually written $X\cdot S$, and called a {\em copower}, is defined by a natural isomorphism 
$$\ck(X\cdot S,-) \cong \cv(X,\ck(S,-)).$$

A limit in \ck is the same as a colimit in $\ck\op$, but typically one writes in terms of a diagram $S\colon\cd\to\ck$ and weight $\cd\to\cv$ (a presheaf on $\cd\op$). 

\subsection*{Local presentability}

A symmetric monoidal closed category \cv is said to be {\em locally $\lambda$-presentable as a closed category} \cite{K} if the underlying ordinary category is locally $\lambda$-presentable in the usual sense, and the full subcategory of $\lambda$-presentable objects is closed under tensoring and contains the unit. 

In this case there is a good theory of \cv-enriched locally $\lambda$-presentable categories \cite{K}; a \cv-category \ck is locally $\lambda$-presentable if and only if it is a full reflective \cv-category of a presheaf category $[\cd\op,\cv]$, closed under $\lambda$-filtered colimits. 

Any symmetric monoidal closed category \cv which is locally presentable as a category is locally $\lambda$-presentable as a closed category for some $\lambda$, and then also for all larger values of $\lambda$: see \cite{vcat}.

\subsection*{Model \cv-categories}

We suppose that \cv is a monoidal model category in the sense used in \cite{L}, which includes the assumption that the unit $I$ is cofibrant, and that \cv satisfies the monoid axiom \cite{SS}.

A model \cv-category \cite{Ho} is a complete and cocomplete \cv-category \cm, with a model structure on the underlying ordinary category $\cm_0$, subject to a compatibility condition asserting that if $j\colon X\to Y$ is a cofibration in \cv and $f\colon A\to B$ a cofibration in \cm then the induced map $j\square f$ out of the pushout in 
$$\xymatrix{
X\cdot A \ar[r]^{j\cdot A} \ar[d]_{X\cdot f} & Y\cdot A \ar[d]  \ar@/^1pc/[ddr]^{Y\cdot f} \\
X\cdot B \ar[r] \ar@/_1pc/[drr]_{j\cdot B} & P \ar@{.>}[dr]^{j\square f} \\
&& Y\cdot B }$$
is a cofibration; and a trivial cofibration if either $j$ or $f$ is one. 

For a small \cv-category \cd, we regard the presheaf category $[\cd\op,\cv]$ as a model \cv-category under the projective model structure. 

Suppose that \cm is a model \cv-category. A morphism $f\colon A\to B$ in \cm may be identified with a morphism $I\to \cm(A,B)$. Since \cm and \cv each have model structures, we could consider either the homotopy relation on morphisms $A\to B$ defined using the model structure on \cm, or the homotopy relation on morphisms $I\to\cm(A,B)$ defined using the model structure of \cv. The following easy result helps to resolve this potential ambiguity:

\begin{propo}\label{prop:two-notions}
  If $A$ is cofibrant and $B$ is fibrant in \cm, then the two notions of homotopy agree. 
\end{propo}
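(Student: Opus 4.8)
The plan is to reduce both homotopy relations to the existence of a single map out of one cleverly chosen cylinder object, and then to match such maps via the copower adjunction. I will use the standard fact from model category theory that, when the source is cofibrant and the target is fibrant, the left and right homotopy relations coincide, form an equivalence relation, and can be detected through any fixed \emph{good} cylinder object of the source (one for which $A\sqcup A\to\mathrm{Cyl}(A)$ is a cofibration and $\mathrm{Cyl}(A)\to A$ a weak equivalence); the ``any fixed cylinder'' clause rests on a lifting argument that is available precisely because the target is fibrant. I want to apply this on both sides. On the \cm-side the source $A$ is cofibrant and the target $B$ fibrant by hypothesis. On the \cv-side the source is the unit $I$, which is cofibrant under our standing assumptions, so I first record that the target $\cm(A,B)$ is fibrant in \cv. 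This holds because, for $A$ cofibrant, the copowering functor $-\cdot A\colon\cv\to\cm$ is left Quillen — its preservation of cofibrations and trivial cofibrations is exactly the compatibility axiom applied to $\emptyset\to A$ — so its right adjoint $\cm(A,-)\colon\cm\to\cv$ is right Quillen and hence sends the fibrant object $B$ to a fibrant object.

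The heart of the argument is to produce compatible good cylinder objects on the two sides. I choose any good cylinder object $I\sqcup I\to C\to I$ for the unit in \cv; since $I$, and hence $I\sqcup I$, is cofibrant and $I\sqcup I\to C$ is a cofibration, $C$ is cofibrant. I claim that $C\cdot A$ is a good cylinder object for $A$. Because $-\cdot A$ preserves coproducts and satisfies $I\cdot A\cong A$, applying it to $I\sqcup I\to C\to I$ produces a factorization $A\sqcup A\to C\cdot A\to A$ of the fold map. The first map is $(I\sqcup I\to C)\cdot A$, which is a cofibration by the compatibility axiom, since the pushout-product of the cofibration $I\sqcup I\to C$ with $\emptyset\to A$ is precisely this copowered map; the second map is a weak equivalence by Ken Brown's lemma, as $-\cdot A$ is left Quillen and $C\to I$ is a weak equivalence between cofibrant objects.

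Finally I invoke the copower adjunction $\cm_0(C\cdot A,B)\cong\cv_0(C,\cm(A,B))$. Under this bijection a map $H\colon C\cdot A\to B$ corresponds to $\tilde H\colon C\to\cm(A,B)$, and by naturality in the \cv-variable the restrictions of $H$ along the endpoint inclusions $i_k\cdot A\colon A\cong I\cdot A\to C\cdot A$ correspond to the restrictions of $\tilde H$ along the endpoint inclusions $i_k\colon I\to C$. Thus $H$ is a homotopy from $f$ to $g$ in \cm exactly when $\tilde H$ is a homotopy from $\bar f$ to $\bar g$ in \cv, where $\bar f,\bar g\colon I\to\cm(A,B)$ are the morphisms corresponding to $f,g$. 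Detecting each homotopy relation through the respective good cylinder — $C\cdot A$ on the \cm-side and $C$ on the \cv-side — then yields $f\simeq g$ in \cm if and only if $\bar f\simeq\bar g$ in \cv, which is the claim.

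I expect the main obstacle to be the middle step: verifying that $C\cdot A$ is genuinely a cylinder object, that is, simultaneously extracting the cofibration condition from the pushout-product axiom and the weak-equivalence condition via the left-Quillen and Ken-Brown route, and being careful that the copowered endpoint inclusions really are the cylinder's endpoint inclusions, so that the adjunction matches the boundary data on the nose.
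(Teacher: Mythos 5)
Your proof is correct and takes essentially the same route as the paper's: both factorize the codiagonal to get a good cylinder $I+I\to J\to I$ for the unit in \cv, transport it along the left Quillen copower functor $-\cdot A$ to a good cylinder $A+A\to J\cdot A\to A$ for $A$, and then match homotopies through these two cylinders via the adjunction $\cm_0(J\cdot A,B)\cong\cv_0(J,\cm(A,B))$. The only differences are cosmetic: you use Ken Brown's lemma where the paper deduces that $w\cdot A$ is a weak equivalence from the trivial cofibrations $i\cdot A$, $j\cdot A$ and two-out-of-three, and you spell out the fibrancy of $\cm(A,B)$, which the paper leaves implicit.
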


\proof
Let $f,g\colon A\to B$ be morphisms, and write $f',g'$ for the corresponding $I\to\cm(A,B)$.
Factorize the codiagonal $I+I\to I$ as a cofibration $(i~j)\colon I+I\to J$ followed by a weak equivalence $w\colon J\to I$; it follows that $i$ and $j$ are trivial cofibrations. Since $I$ and $A$ are cofibrant, $A+A$ is cofibrant; also $(i~j)\cdot A\colon A+A\to J\cdot A$ is a cofibration with $i\cdot A$ and $j\cdot A$ trivial cofibrations, and so $w\cdot A\colon J\cdot A\to A$ is a weak equivalence. 

Then $f'$ is homotopic to $g'$ if and only if the induced $(f'~g')\colon I+I\to \cm(A,B)$ factorizes through $(i~j)$, say as $h'\colon J\to \cm(A,B)$. And $f$ is homotopic to $g$ if and only if the induced $(f~g)\colon A+A\to B$ factorizes through $(i~j)\cdot A$, say as $h\colon J\cdot A\to B$. But to give such an $h$ is equivalently to give an $h'$, by the universal property of the copower $J\cdot A$. Thus $f$ is homotopic to $g$ if and only if $f'$ is homotopic to $g'$. 
\endproof

\section{Homotopy equivalences}\label{sect:equivalences}
 
Given a monoidal model category $\cv$, we have a monoidal structure on $\Ho\cv$, for which the canonical functor $P\colon\cv\to\Ho\cv$ is strong 
monoidal. Since the hom-functor $\Ho\cv(I,-)\colon\Ho\cv\to\Set$ is also monoidal, so is the composite $U:=\Ho\cv(I,P-)\colon\cv\to\Set$. On the other hand, there is also the monoidal functor $\cv(I,-)\colon\cv\to\Set$, and $P$ induces a monoidal natural transformation $p\colon\cv(I,-)\to U$ whose component at $A\in\cv$ is the function $p\colon\cv(I,X)\to\Ho\cv(I,X)$ given by applying $P$.

The following definition also appears in \cite[A.3.2.9]{L}, although we have been more explicit about the role of $p\colon\cv(I,-)\to U$.

\begin{defi}\label{defn:hoK}
Let $\cv$ be a monoidal model category and $\ck$ a $\cv$-category. The {\em homotopy category} $\ho\ck$ of $\ck$ has the same objects as $\ck$, 
and $\ho\ck(A,B)=U(\ck(A,B))$. There is an induced functor $p_*$ from the underlying ordinary category $\ck_0$ of \ck to $\ho\ck$, sending a morphism $f\colon I\to\ck(A,B)$ to $p(f)$.
 
A morphism $f\colon A\to B$ in a $\cv$-category $\ck$ is called a {\em homotopy equivalence} if its image in $\ho\ck$ is invertible.
\end{defi}

\begin{rem}\label{rmk:ho-Ho}
For a model $\cv$-category $\cm$, we now have the standard homotopy category $\Ho\cm$ of the underlying ordinary category $\cm_0$ of $\cm$, and the homotopy category $\ho\cm$ defined 
using the enrichment, and these need not agree. But if $\Int\cm$ is the full subcategory of $\cm$ consisting of the fibrant and cofibrant objects,
then $\ho(\Int\cm)$ is equivalent to $\Ho(\cm)$, thanks to Proposition~\ref{prop:two-notions}. 

Since the passage from $\ck$ to $\ho\ck$ is functorial, a $\cv$-functor $\ck\to\cl$ sends homotopy equivalences to homotopy equivalences. Furthermore, if a \cv-functor $\ck\to\cl$ is fully faithful then so is the induced functor $\ho\ck\to\ho\cl$; thus fully faithful \cv-functors reflect homotopy equivalence. 
\end{rem}
 
\begin{defi}\label{defn:fibrant}
A $\cv$-category $\ck$ is said to be {\em fibrant} if each hom-object $\ck(A,B)$ is fibrant in $\cv$. 
\end{defi}

\begin{rem}\label{rmk:loc-fib}
These are also called {\em locally fibrant} \cite[A.3.2.9]{L}, following the usage that an enriched category is ``locally $P$'' if its hom-objects are $P$. The name fibrant was also used in \cite{R1} in the case $\cv=\SSet$, and is justified by the fact that, in those cases where a model structure on \vcat has been defined, the fibrant objects are precisely the fibrant \cv-categories in our sense.
\end{rem}

\begin{exam}
  If \cm is a locally presentable \cv-category with a \cv-enriched model structure, then $\cm(A,B)$ is fibrant in \cv whenever $A$ is cofibrant in \cm and $B$ is fibrant in \cm. Thus $\Int\cm$ is a fibrant \cv-category.
\end{exam}

\begin{rem}\label{rmk:hty-equiv-symmetry}
Let \cm be an ordinary model category, with homotopy category $P\colon\cm\to\Ho\cm$. If $X$ is cofibrant and $Y$ is fibrant, then $\Ho\cm(PX,PY)$ may be identified with the quotient of $\cm(X,Y)$ by the homotopy relation. In particular, $P_{X,Y}\colon\cm(X,Y)\to\Ho\cm(PX,PY)$ is surjective. If now \ck is a fibrant \cv-category, then for any two objects $A,B\in\ck$ we have $I$ cofibrant and $\ck(A,B)$ fibrant in \cv; thus 
$$\xymatrix{
\ck_0(A,B) = \cv(I,\ck(A,B)) \ar[r]^-{P} & \Ho\cv(I,\ck(A,B)) = \ho\ck(A,B) }$$
is surjective, and any morphism in $\ho\ck$ is induced by one in $\ck_0(A,B)$.
This means, in particular, that if \ck is a fibrant \cv-category, then there exists a homotopy equivalence $A\to B$ if and only if there exists a homotopy equivalence $B\to A$. Thus ``homotopy equivalence'' defines an equivalence relation on the objects of a fibrant \cv-category. In this case we shall sometimes write $A\simeq B$. 
\end{rem}

\begin{defi}[J. H. Smith] \label{def:lambda-combinatorial}
  A model category is {\em $\lambda$-combinatorial}, for a regular cardinal $\lambda$, if it is locally $\lambda$-presentable as a category, cofibrantly generated as a model category, and the generating cofibrations and trivial cofibrations may be chosen to have $\lambda$-presentable domains and codomains. It is combinatorial if it is $\lambda$-combinatorial for some $\lambda$ This will be the case for some $\lambda$ if and only if it is cofibrantly generated and locally presentable; it will then also be $\mu$-combinatorial whenever $\mu>\lambda$.
\end{defi}

\begin{rem}\label{rmk:combinatorial}
We shall use the following facts about $\lambda$-combinatorial model categories:
\begin{itemize}
\item the cofibrant and fibrant replacement functors preserve $\lambda$-filtered colimits;
\item the weak equivalences are closed under $\lambda$-filtered colimits;
\item the fibrant objects are closed under $\lambda$-filtered colimits.
\end{itemize}
Proofs of the first two can be found in \cite[2.3]{D} or \cite[3.1]{R}, while the third is really a general fact about injectivity classes, and was proved in that context in \cite[4.7]{AR}.
\end{rem}

\begin{propo}\label{prop:hty-equiv}
Let $\cv$ be a combinatorial monoidal model category satisfying the monoid axiom, $\ck$ a fibrant $\cv$-category and $f\colon A\to B$ a morphism 
in $\ck$. Then the following conditions are equivalent:
\begin{enumerate}
\item[(i)] $f$ is a homotopy equivalence.
\item[(ii)] $\ck(C,f)$ is a weak equivalence in $\cv$, for all $C\in\ck$.
\item[(iii)] $\ck(f,C)$ is a weak equivalence in $\cv$, for all $C\in\ck$.
\item[(iv)] $\ck(C,f)$ is a weak equivalence in $\cv$ for $C$ equal to $A$ or $B$;
\item[(v)] $\ck(f,C)$ is a weak equivalence in $\cv$ for $C$ equal to $A$ or $B$.
\end{enumerate}
\end{propo}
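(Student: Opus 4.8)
The plan is to transport the entire statement into the homotopy category $\Ho\cv$, where it becomes a purely formal fact about enriched categories. Two preliminary observations drive everything. First, since $\cv$ is a model category, a morphism $h$ of $\cv$ is a weak equivalence if and only if $Ph$ is invertible in $\Ho\cv$ (saturation of the weak equivalences); thus conditions (ii)--(v) may be restated as saying that the relevant $P\ck(C,f)$ or $P\ck(f,C)$ is invertible in $\Ho\cv$. Second, because $P\colon\cv\to\Ho\cv$ is strong monoidal, pushing the composition of $\ck$ forward along $P$ makes the objects of $\ck$ into a $\Ho\cv$-category $\tilde\ck$ with $\tilde\ck(A,B)=P\ck(A,B)$, and the underlying ordinary category $\tilde\ck_0$, obtained by applying $\Ho\cv(I,-)$ to its hom-objects, is exactly the category $\ho\ck$ of Definition~\ref{defn:hoK}. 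Under this identification the map $P\ck(C,f)\colon\tilde\ck(C,A)\to\tilde\ck(C,B)$ is post-composition by $\bar f:=p_*f$ in $\tilde\ck$, and $P\ck(f,C)$ is pre-composition by $\bar f$; applying $\Ho\cv(I,-)$, the maps $U(\ck(C,f))$ and $U(\ck(f,C))$ are respectively post- and pre-composition by $\bar f$ in the ordinary category $\ho\ck$.

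I would then prove (i)$\Leftrightarrow$(ii)$\Leftrightarrow$(iv). For (i)$\Rightarrow$(ii): if $\bar f$ is invertible in $\ho\ck=\tilde\ck_0$ with inverse $\bar g$, then since post-composition in the $\Ho\cv$-category $\tilde\ck$ is functorial in the morphism, $P\ck(C,f)$ is invertible in $\Ho\cv$ with inverse post-composition by $\bar g$; hence $\ck(C,f)$ is a weak equivalence by saturation. The implication (ii)$\Rightarrow$(iv) is trivial. For (iv)$\Rightarrow$(i), the hypothesis makes $P\ck(A,f)$ and $P\ck(B,f)$ invertible, so applying $\Ho\cv(I,-)$ shows post-composition by $\bar f$ is a bijection $\ho\ck(A,A)\to\ho\ck(A,B)$ and a bijection $\ho\ck(B,A)\to\ho\ck(B,B)$. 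From surjectivity of the second, pick $\bar h\in\ho\ck(B,A)$ with $\bar f\bar h=\id_B$; then $\bar f(\bar h\bar f)=(\bar f\bar h)\bar f=\bar f=\bar f\,\id_A$, and injectivity of the first forces $\bar h\bar f=\id_A$. Thus $\bar f$ is invertible in $\ho\ck$, i.e. $f$ is a homotopy equivalence.

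The equivalence (i)$\Leftrightarrow$(iii)$\Leftrightarrow$(v) is entirely dual, replacing post-composition by pre-composition and the covariant representable $\tilde\ck(C,-)$ by the contravariant $\tilde\ck(-,C)$: from (v) one produces a left inverse $\bar k$ with $\bar k\bar f=\id_A$ using $C=A$, and promotes it to a two-sided inverse using injectivity at $C=B$. I expect the only delicate points to be the two bookkeeping facts in the first paragraph --- that $\tilde\ck$ really is a $\Ho\cv$-category whose underlying category is $\ho\ck$, and that its representable action maps are exactly $\Ho\cv(I,-)$ applied to $P\ck(C,f)$ and $P\ck(f,C)$ --- together with the two-object Yoneda trick that manufactures a genuine inverse in $\ho\ck$ out of invertibility of the representables at just $C=A$ and $C=B$. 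It is worth remarking that this argument uses only that $\cv$ is a monoidal model category (so that $P$ is strong monoidal and inverts precisely the weak equivalences); the fibrancy of $\ck$ and the combinatorial and monoid-axiom hypotheses, while in force throughout the section, are not actually needed for this particular statement.
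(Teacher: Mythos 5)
Your proof is correct, and it takes a genuinely different route from the paper's. For the one nontrivial implication (iv)$\Rightarrow$(i), the paper forms the full subcategory $\cc\subseteq\ck$ on $A$ and $B$, embeds $\ck$ into $[\cc\op,\cv]$ with its projective model structure via $\tilde{J}$, notes that $\tilde{J}f$ is then a weak equivalence between the representables $\cc(-,A)$ and $\cc(-,B)$, which are projectively cofibrant and also fibrant (this is exactly where fibrancy of $\ck$ enters), hence a homotopy equivalence by Whitehead plus Proposition~\ref{prop:two-notions}, and finally reflects this along the fully faithful Yoneda embedding. You instead transport everything to $\Ho\cv$ by change of base along $P$, invoke saturation of the weak equivalences, and finish with the elementary two-object retraction trick in $\ho\ck$. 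What your approach buys: it is more elementary and strictly more general, since it never uses the projective model structure, so---as you correctly observe---neither combinatoriality, nor the monoid axiom, nor even fibrancy of $\ck$ is actually needed; the statement holds for any $\cv$-category over any monoidal model category. What the paper's approach buys: it runs on machinery (presheaf model structures, Proposition~\ref{prop:two-notions}) that is set up anyway, and the same embedding-into-presheaves technique recurs throughout the paper, e.g.\ in the proof of Proposition~\ref{prop:hcolimit-uniqueness}(2). One caveat on your bookkeeping: strong monoidality of $P\colon\cv\to\Ho\cv$, although asserted at the start of Section~\ref{sect:equivalences}, fails for a general monoidal model category (the comparison $QX\otimes QY\to X\otimes Y$ need not be a weak equivalence unless, say, all objects of $\cv$ are cofibrant, which the paper assumes only from Section~\ref{sect:hlp} onward); this is harmless here, since change of base and the identification of $P\ck(C,f)$ with post-composition by $\bar{f}$ in $\tilde{\ck}$ require only a lax monoidal structure on $P$ together with its unit coherence, plus cofibrancy of $I$ to identify the unit objects---but it would be worth saying so explicitly if you want the full generality you claim in your last sentence.
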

\begin{proof}
Clearly (i) implies all the other conditions, since representable functors send homotopy equivalences in $\ck$ or $\ck\op$ to homotopy equivalences 
in $\cv$, and homotopy equivalences in $\cv$ are weak equivalences.  Even more clearly (ii) implies (iv) and (iii) implies (v). If we can prove 
that (iv) implies (i), then dually (v) will imply (i), and so all conditions will be equivalent.

Suppose then that $\ck(C,f)$ is a weak equivalence for $C$ equal to $A$ or $B$. Let $\cc$ be the full subcategory of $\ck$ with objects $A$ and $B$, 
and consider the projective model structure on $[\cc\op,\cv]$. The inclusion  $J\colon\cc\to\ck$ induces a $\cv$-functor 
$\tilde{J}\colon\ck\to[\cc\op,\cv]$ with $\tilde{J}J$ equal to the Yoneda functor $Y$. Now $\tilde{J}f\colon\tilde{J}A\to\tilde{J}B$ is a weak equivalence 
in $[\cc\op,\cv]$ by assumption, but its domain and codomain are the representables $\cc(-,A)$ and $\cc(-,B)$ which are fibrant and cofibrant,
thus $\tilde{J}f$ is in fact a homotopy equivalence. But that means $Yf$ is a homotopy equivalence, whence $f$ is a homotopy equivalence in $\cc$, and 
so also in $\ck$. 
\end{proof}

\section{Homotopy orthogonality}\label{sect:orthogonality}

\begin{defi}\label{defn:hty-orthog}
Let $\cv$ be a monoidal model category, $\ck$ a $\cv$-category, and $f\colon A\to B$ a morphism in $\ck$. Then an object $K$ in $\ck$  is called
{\em homotopy orthogonal} to $f$ if $\ck(f,K)$ is a weak equivalence. 
\end{defi}

While homotopy orthogonality is clearly some sort of homotopy version of orthogonality, it also resembles injectivity in some ways: see Proposition~\ref{prop:Fhorn} for example.
In the terminology of \cite{LR1}, such a homotopy orthogonal object would be called $f$-injective over the weak equivalences. An object $K$ is homotopy orthogonal to a class $\cf$ of morphisms 
if it is homotopy orthogonal to each $f\in\cf$. The class of all objects homotopy orthogonal to $\cf$ is denoted by $\cf$-$\Inj$. 
{\em Small homotopy orthogonality classes} are defined as classes $\cf$-$\Inj$ where $\cf$ is a set. Without this limitation, we speak about 
{\em homotopy orthogonality classes}.

By Proposition~\ref{prop:hty-equiv}, a morphism $f$ is a homotopy equivalence if and only if {\em every} object $K$ is homotopy orthogonal to $f$. 

\begin{lemma}\label{lemma:hty-orthog-invariance}
Let $\ck$ be a fibrant $\cv$-category, $f\colon A\to B$ a morphism in $\ck$, and $K$ and $L$ homotopy equivalent objects of \ck. Then $K$ is homotopy orthogonal to $f$ if and only if $L$ is so. 
\end{lemma}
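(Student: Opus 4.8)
The plan is to transport homotopy orthogonality across the equivalence $K\simeq L$ by means of a naturality square together with the two-out-of-three property of weak equivalences. Since $\ck$ is fibrant, Remark~\ref{rmk:hty-equiv-symmetry} tells us that homotopy equivalence is a symmetric relation on the objects of $\ck$, so from $K\simeq L$ we may choose an actual homotopy equivalence $g\colon K\to L$. Precomposition with $f\colon A\to B$ and postcomposition with $g$ then fit into a commutative square
\[
\xymatrix{
\ck(B,K) \ar[r]^{\ck(f,K)} \ar[d]_{\ck(B,g)} & \ck(A,K) \ar[d]^{\ck(A,g)} \\
\ck(B,L) \ar[r]_{\ck(f,L)} & \ck(A,L)
}
\]
in $\cv$, the commutativity being exactly the middle-four interchange built into the definition of a $\cv$-category.

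Next I would check that the two vertical maps are weak equivalences. The covariant representables $\ck(A,-)$ and $\ck(B,-)$ are $\cv$-functors $\ck\to\cv$, so by Remark~\ref{rmk:ho-Ho} they carry the homotopy equivalence $g$ to homotopy equivalences $\ck(A,g)$ and $\ck(B,g)$ in $\cv$; and homotopy equivalences in $\cv$ are weak equivalences. (Alternatively one could appeal directly to Proposition~\ref{prop:hty-equiv}, which gives that $\ck(C,g)$ is a weak equivalence for every $C$, in particular for $C=A,B$.)

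Finally, suppose $K$ is homotopy orthogonal to $f$, so that $\ck(f,K)$ is a weak equivalence. Then the composite $\ck(A,g)\circ\ck(f,K)$ is a weak equivalence, and by commutativity of the square it equals $\ck(f,L)\circ\ck(B,g)$. Since $\ck(B,g)$ is a weak equivalence, two-out-of-three forces $\ck(f,L)$ to be a weak equivalence as well; that is, $L$ is homotopy orthogonal to $f$. The converse implication is obtained by running the identical argument with the inverse homotopy equivalence $L\to K$ supplied by Remark~\ref{rmk:hty-equiv-symmetry}, equivalently by interchanging the roles of $K$ and $L$.

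I do not anticipate any real obstacle here: the argument is a short diagram chase, and the only points needing care are verifying that the square commutes (the enriched interchange law) and justifying that $g$ may be taken to run from $K$ to $L$, which is precisely the content of the symmetry statement in Remark~\ref{rmk:hty-equiv-symmetry}.
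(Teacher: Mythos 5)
Your proof is correct and takes essentially the same route as the paper: the same commutative square comparing $\ck(f,K)$ and $\ck(f,L)$, with vertical maps $\ck(B,g)$ and $\ck(A,g)$ shown to be weak equivalences via Proposition~\ref{prop:hty-equiv}, followed by two-out-of-three. (A tiny simplification: you do not need the inverse equivalence $L\to K$ for the converse, since two-out-of-three applied twice to the same square already gives the ``if and only if'' in both directions.)
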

\begin{proof}
Suppose that $K$ is homotopy orthogonal to $f$, and let $h\colon K\to L$ be a homotopy equivalence. In the commutative square
$$
\xymatrix@C=4pc@R=3pc{
\ck(B,K) \ar[r]^{\ck(f,K)} \ar[d]_{\ck(B,h)} &
\ck(A,K)\ar [d]^{\ck(A,h)}\\
\ck(B,L) \ar[r]_{\ck(f,L)}& \ck(A,D)
}
$$
the vertical morphisms are weak equivalences by Proposition~\ref{prop:hty-equiv}, thus $\ck(f,K)$ is a weak equivalence if and only if $\ck(f,L)$ is one. 
\end{proof}

\begin{lemma}\label{lemma:hty-orthog-comp}
Let $\ck$ be a fibrant $\cv$-category, $f\colon A\to B$ a morphism in $\ck$, and $f= gh$ where $h$ is a homotopy equivalence. Then an object $K$ is homotopy orthogonal to $f$ if and only if it homotopy orthogonal to $g$.
\end{lemma}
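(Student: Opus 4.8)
The plan is to reduce the statement to the two-out-of-three property of weak equivalences in \cv. Write $h\colon A\to C$ and $g\colon C\to B$, so that $f=gh$. Since $\ck(-,K)\colon\ck\op\to\cv$ is a (contravariant) \cv-functor, applying it to the factorization $f=gh$ produces the factorization
$$\ck(f,K)=\ck(h,K)\circ\ck(g,K)$$
in \cv, where $\ck(g,K)\colon\ck(B,K)\to\ck(C,K)$ and $\ck(h,K)\colon\ck(C,K)\to\ck(A,K)$.

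The key observation is that $\ck(h,K)$ is automatically a weak equivalence. Indeed, \ck is fibrant and $h$ is a homotopy equivalence, so by the implication (i)$\Rightarrow$(iii) of Proposition~\ref{prop:hty-equiv} the map $\ck(h,C')$ is a weak equivalence for every object $C'\in\ck$; taking $C'=K$ gives the claim.

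Now, by definition, $K$ is homotopy orthogonal to $f$ (respectively to $g$) precisely when $\ck(f,K)$ (respectively $\ck(g,K)$) is a weak equivalence. Since $\ck(f,K)=\ck(h,K)\circ\ck(g,K)$ with $\ck(h,K)$ a weak equivalence, the two-out-of-three property applied in \cv shows that $\ck(f,K)$ is a weak equivalence if and only if $\ck(g,K)$ is, which is exactly the asserted equivalence.

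There is no substantial obstacle here: the only ingredient beyond formal functoriality is the passage from ``$h$ is a homotopy equivalence'' to ``$\ck(h,K)$ is a weak equivalence,'' which is delivered directly by Proposition~\ref{prop:hty-equiv} and uses the fibrancy of \ck. The one point requiring care is the contravariance of $\ck(-,K)$: it reverses the order of composition, so one must check that it is the factor $\ck(h,K)$ coming from the homotopy equivalence $h$ — and not $\ck(g,K)$ — that is known in advance to be a weak equivalence before invoking two-out-of-three.
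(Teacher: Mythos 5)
Your proof is correct and is essentially the paper's own argument: both factor $\ck(f,K)=\ck(h,K)\circ\ck(g,K)$, invoke Proposition~\ref{prop:hty-equiv} (using fibrancy of \ck) to see that $\ck(h,K)$ is a weak equivalence, and conclude by two-out-of-three in \cv. Your extra remarks on contravariance and on which factor is known to be a weak equivalence are just a more explicit rendering of the same steps.
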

\begin{proof}
Since 
$$
\ck(f,K)=\ck(h,K)\ck(g,K)
$$
and $\ck(h,K)$ is a weak equivalence by Proposition~\ref{prop:hty-equiv}, $\ck(f,K)$ is a weak equivalence if and only if $\ck(g,K)$
is a weak equivalence.
\end{proof}

\begin{defi}\label{defn:hty-refl}
Let $\cl$ be a full sub-$\cv$-category of a fibrant $\cv$-category $\ck$. We say that $\cl$ is {\em homotopy reflective} in $\ck$ if,
for each $K$ in $\ck$, there is a morphism $\eta_K\colon K\to K^\ast$ with $K^\ast$ in $\cl$ such that each $L$ in $\cl$ is homotopy orthogonal to $\eta_K$.
\end{defi}

Homotopy reflective full subcategories coincide with subcategories which are, in the sense of \cite{LR1}, weakly reflective with respect to the weak equivalences. 

A locally presentable model category $\cm$ is called {\em tractable} \cite{Ba} if both cofibrations and trivial cofibrations are cofibrantly generated 
by a set of morphisms between cofibrant objects. Of course, every tractable model category is combinatorial.  

\begin{theo}\label{thm:orthog-refl}
Let $\cv$ be a tractable monoidal model category and $\cm$ a tractable left proper model $\cv$-category. Then each small homotopy orthogonality class in $\Int\cm$ is homotopy reflective.
\end{theo}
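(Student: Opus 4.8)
The plan is to realize the reflection as the localization map of a left Bousfield localization. Write $\cf$ for the given set of morphisms in $\Int\cm$; since every object of $\Int\cm$ is cofibrant and fibrant in $\cm$, we may regard $\cf$ as a set of maps between cofibrant objects of $\cm_0$. Because $\cm$ is combinatorial (tractability implies this) and left proper, the left Bousfield localization $L_\cf\cm$ exists: it has the same underlying category and the same cofibrations as $\cm$, its weak equivalences are the $\cf$-local equivalences, and its fibrant objects are precisely the $\cf$-local objects. The tractability of $\cv$ and of $\cm$ is what guarantees this localization can be built and that the generating data stay between cofibrant objects, while left properness is needed for the localized structure to exist and to behave well under the pushouts that enter implicitly.

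The reflection is then produced by fibrant replacement in $L_\cf\cm$. Given $K\in\Int\cm$, which is in particular cofibrant, I would factorize $K\to\ast$ in $L_\cf\cm$ as a trivial cofibration $\eta_K\colon K\to K^\ast$ followed by an $L_\cf\cm$-fibration. Since cofibrations are unchanged by localization, $\eta_K$ is a cofibration of $\cm$ and $K^\ast$ is cofibrant; and since $K^\ast$ is fibrant in $L_\cf\cm$ it is both fibrant in $\cm$ and $\cf$-local. Hence $K^\ast\in\Int\cm$, and $\eta_K\colon K\to K^\ast$ is an $\cf$-local equivalence between cofibrant objects.

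The technical heart is to match the enriched notion of homotopy orthogonality with the standard notion of $\cf$-locality. For $A$ cofibrant and $W$ fibrant in $\cm$, the object $\cm(A,W)\in\cv$ is a model for the derived mapping object, and I would show that a map $g\colon A\to B$ of cofibrant objects has $\cm(g,W)$ a weak equivalence in $\cv$ if and only if $g$ is seen as an equivalence by $W$ in the usual sense, via the homotopy function complex $\operatorname{map}(A,W)$. This is exactly where the running hypotheses on $\cv$ --- a monoidal model category in which all objects are cofibrant --- are used: they make the derived $\cv$-valued hom homotopically correct and comparable with $\operatorname{map}(A,W)$. Granting this comparison, an object $W\in\Int\cm$ is $\cf$-local if and only if $W\in\cf$-$\Inj$; in particular $K^\ast\in\cf$-$\Inj$.

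Finally, given any $L\in\cf$-$\Inj$, the comparison shows $L$ is $\cf$-local and fibrant, so the $\cf$-local equivalence $\eta_K$ between cofibrant objects induces a weak equivalence $\cm(\eta_K,L)\colon\cm(K^\ast,L)\to\cm(K,L)$ in $\cv$; that is, $L$ is homotopy orthogonal to $\eta_K$. Together with $K^\ast\in\cf$-$\Inj$, this verifies the conditions of Definition~\ref{defn:hty-refl}, so $\cf$-$\Inj$ is homotopy reflective. I expect the main obstacle to be the comparison of the third paragraph --- showing that the $\cv$-enriched hom $\cm(-,-)$ detects precisely the $\cf$-local objects and $\cf$-local equivalences --- since this is what bridges the enriched definition of homotopy orthogonality and the model-categorical localization machinery; the verifications in the remaining steps are then formal consequences of the defining properties of $L_\cf\cm$.
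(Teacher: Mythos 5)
Your overall strategy (realize the reflection as fibrant replacement in a left Bousfield localization) is the same as the paper's, but you localize in the wrong category of homotopy theories: you form the \emph{ordinary} left Bousfield localization $L_\cf\cm$, whose fibrant objects are the $\cf$-local objects detected by simplicial homotopy function complexes $\operatorname{map}(A,W)$, whereas the paper forms the \emph{$\cv$-enriched} left Bousfield localization of Barwick \cite{Ba} (after first using Lemma~\ref{lemma:hty-orthog-comp} to replace \cf by cofibrations), whose fibrant objects are by construction exactly the objects $W$ with $\cm(f,W)$ a weak equivalence \emph{in \cv} for all $f\in\cf$, i.e.\ exactly the homotopy orthogonal objects of Definition~\ref{defn:hty-orthog}. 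This is precisely why tractability of \cv itself (and not just of \cm) appears in the hypotheses. The step you call the ``technical heart''---that $\cv$-enriched locality agrees with locality for homotopy function complexes---is a genuine gap, and in fact it is false in general: weak equivalence of homotopy function complexes is strictly weaker than weak equivalence of $\cv$-valued homs. Concretely, take $\cv=\cm$ to be unbounded chain complexes over a field $k$ with the projective model structure (tractable, left proper), and $\cf=\{0\to k[1]\}$, where $k[1]$ is $k$ in homological degree $1$. Then $\cm(k[1],k[0])\cong k[-1]$ has no homology in non-negative degrees, so $\operatorname{map}(k[1],k[0])$ is contractible and $k[0]$ is $\cf$-local in your sense; but $\cm(k[1],k[0])\to\cm(0,k[0])=0$ is not a weak equivalence in \cv, so $k[0]$ is \emph{not} homotopy orthogonal to \cf. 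Since $k[0]$ is already fibrant in your $L_\cf\cm$, your construction gives $\eta_{k[0]}=\mathrm{id}$ and $K^\ast=k[0]\notin\cf$-$\Inj$, so neither half of Definition~\ref{defn:hty-refl} is established by your argument.

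The justification you offer for the bridge also cannot be invoked: the assumption that all objects of \cv are cofibrant is not among this theorem's hypotheses (it becomes a standing assumption only from Section~\ref{sect:hlp} onwards), and even where it holds it only makes $\cm(A,W)$ a correct model for the derived $\cv$-valued hom; it does not identify weak equivalences in \cv between such homs with weak equivalences of simplicial mapping spaces, as the example above (where all the relevant objects are cofibrant and fibrant) shows. The repair is to replace $L_\cf\cm$ by the enriched localization of \cite{Ba}, after which your remaining steps (fibrant replacement of $K$ in the localized $\cv$-model structure gives $\eta_K\colon K\to K^\ast$ with $K^\ast\in\Int\cm$ enriched-local, and enriched-local objects see enriched-local equivalences between cofibrant objects as weak equivalences on $\cv$-homs) go through and recover the paper's proof; alternatively one can try to reduce to an ordinary localization, but then the localizing set must be built from \cf together with the generating cofibrations of \cv (the $\cf$-horns of Proposition~\ref{prop:Fhorn}), not \cf itself.
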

\begin{proof}
Let $\cf$ be a set of morphisms in $\Int\cm$. Since weak equivalences in $\Int\cm$ are homotopy equivalences in the sense of Definition~\ref{defn:hoK}, 
we can use Lemma~\ref{lemma:hty-orthog-comp} and assume that $\cf$ consists of cofibrations in $\Int\cm$. An object $K$ in $\Int\cm$ is $\cf$-injective if and only if 
it is $\cf$-local; that is, if and only if it is fibrant in the $\cf$-localized $\cv$-model category structure on $\cm$ (see \cite{Ba}). Thus homotopy reflections $\eta_K\colon K\to K^\ast$ are given by fibrant replacements in this model category.
\end{proof}

\begin{rem}\label{rmk:orthog-and-localization}
Let $\cv$ be a tractable monoidal model category and $\cm$ a left proper tractable model $\cv$-category. A consequence of the relationship between 
small homotopy orthogonality classes in $\Int\cm$ and enriched left Bousfield localizations in $\cm$ used in the proof above is that each small 
homotopy orthogonality class $\cf$-$\Inj$ in $\Int\cm$ is $\Int\cn$ for some combinatorial model $\cv$-category $\cn$; in particular, we could take $\cn$ to be the $\cf$-localized model $\cv$-category.
\end{rem}

Let \cm be a cofibrantly generated model \cv-category. 
If $X\in\cv$ and $A\in\cm$, recall from Section~\ref{sect:review} that the {\em copower} $X\cdot A\in\cm$,  is defined by the universal property $\cm(X\cdot A,B)\cong \cv(X,\cm(A,B))$. If $i\colon X\to Y$ is a generating cofibration in \cm, and $f\colon A\to B$ is a morphism in \cm, we can form the pushout $P_{i,f}$ as in the diagram below, and the induced map $i\square f\colon P_{i,f}\to Y\cdot B$, called the {\em pushout-product} of $i$ and $f$.
$$
\xymatrix { 
X\cdot A \ar[r]^{i\cdot A} \ar[d]_{X\cdot f} &
Y\cdot A  \ar[d] \ar@/^1pc/[ddr]^{Y\cdot f}\\
X\cdot B \ar@/_1pc/[drr]_{i\otimes B} \ar[r] & P_{i,f} \ar@{.>}[dr]^{i\square f} \\
&& Y\cdot B
}
$$
Such a map $i\square f$ is called an {\em $f$-horn}, and if $\cf$ is a class of morphisms then we denote by $\Hor(\cf)$ the class of $f$-horns, for all $f\in \cf$. Part of the definition of model \cv-category is that if $f$ is a cofibration or trivial cofibration, then so is each $f$-horn.

Recall that $\Int\cm$ denotes the full subcategory of \cm consisting of those objects which are both fibrant and cofibrant. 

\begin{propo}\label{prop:Fhorn}
Let \cf be a set of cofibrations in $\Int\cm$. An object $K\in\Int\cm$ is homotopy orthogonal to \cf if and only if it is injective in $\cm_0$ to all \cf-horns.
\end{propo}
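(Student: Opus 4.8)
The plan is to recognize homotopy orthogonality as a right lifting property in $\cv$, and then transport that lifting property across the copower--hom adjunction so that it becomes injectivity against the $f$-horns.

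The key observation is that, for $f\colon A\to B$ in $\cf$ and $K\in\Int\cm$, the map $\cm(f,K)\colon\cm(B,K)\to\cm(A,K)$ is a fibration in $\cv$. Indeed, $f$ is a cofibration and, since $K$ is fibrant, the map $K\to 1$ to the terminal object of $\cm$ is a fibration; applying the adjoint form of the pushout-product axiom for the model $\cv$-category $\cm$, and noting that $\cm(A,1)$ and $\cm(B,1)$ are terminal in $\cv$, the induced map $\cm(f,K)$ is a fibration. This is the essential input, and the only place the enriched compatibility axiom enters.

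Now by Definition~\ref{defn:hty-orthog}, $K$ is homotopy orthogonal to $f$ precisely when $\cm(f,K)$ is a weak equivalence. Since $\cm(f,K)$ is already a fibration, it is a weak equivalence if and only if it is a trivial fibration; and since $\cv$ is cofibrantly generated, the trivial fibrations are exactly the maps with the right lifting property against the generating cofibrations of $\cv$. Thus $K$ is homotopy orthogonal to $f$ if and only if $\cm(f,K)$ has the right lifting property with respect to every generating cofibration $i\colon X\to Y$ of $\cv$. (It is the fibration property that makes the forward direction work: a weak equivalence need not have this lifting property, but a weak equivalence that is also a fibration does.)

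It remains to transpose the lifting property. A commutative square from $i$ to $\cm(f,K)$ consists of maps $X\to\cm(B,K)$ and $Y\to\cm(A,K)$ compatible with $i$ and $\cm(f,K)$; across the adjunction $\cv_0(X,\cm(B,K))\cong\cm_0(X\cdot B,K)$ and its variants, such a square is the same as a single map $P_{i,f}\to K$ out of the pushout defining the $f$-horn, and a diagonal filler is the same as an extension of that map along $i\square f\colon P_{i,f}\to Y\cdot B$. Hence $\cm(f,K)$ has the right lifting property against $i$ exactly when $K$ is injective in $\cm_0$ to the $f$-horn $i\square f$; letting $i$ range over the generating cofibrations of $\cv$ and $f$ over $\cf$ then yields the proposition. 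The main obstacle is purely bookkeeping: one must check that the adjunction really carries the commuting square to a map out of the pushout $P_{i,f}$ and the filler to an extension along $i\square f$. Everything else is the fibration observation together with the standard characterization of trivial fibrations.
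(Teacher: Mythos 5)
Your proof is correct and is essentially the paper's own argument: observe that $\cm(f,K)$ is a fibration (via the enriched compatibility axiom, using that $f$ is a cofibration and $K$ is fibrant), so homotopy orthogonality becomes the trivial-fibration condition, i.e.\ the right lifting property against generating cofibrations of $\cv$, which transposes across the copower adjunction into injectivity against the $\cf$-horns. The only difference is that you spell out the fibration step and the adjunction bookkeeping, which the paper leaves implicit.
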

\proof
Since each $f$ is a cofibration, each $\Int\cm(f,K)$ is a fibration. Thus $K$ will be homotopy orthogonal to the $f$ if and only if each $\Int\cm(f,K)$ is a trivial fibration; in other words, each $\Int\cm(f,K)$ has the right lifting property with respect to each generating cofibration $i\colon X\to Y$. But this is equivalent to $K$ being injective in $\cm_0$ with respect to the \cf-horns.
\endproof

\begin{theo}\label{vopenka}
Let $\cv$ be a tractable monoidal model category and $\cm$ a tractable left proper model $\cv$-category. Assuming Vop\v enka's principle, each
homotopy orthogonality class in $\Int\cm$ is a small homotopy orthogonality class in $\Int\cm$.
\end{theo}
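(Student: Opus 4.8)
The plan is to translate homotopy orthogonality in $\Int\cm$ into ordinary injectivity in the locally presentable category $\cm_0$, and then to invoke the classical theorem of \cite{AR} that, under Vop\v enka's principle, every injectivity class in a locally presentable category is a small-injectivity class.

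So let $\cf$ be an arbitrary, possibly proper, class of morphisms in $\Int\cm$; the goal is to produce a set $\mathcal{F}_0$ with $\mathcal{F}_0$-$\Inj=\cf$-$\Inj$. First I would reduce to cofibrations, exactly as in the proof of Theorem~\ref{thm:orthog-refl}: factorize each $f\colon A\to B$ in $\cf$ as a cofibration $c\colon A\to C$ followed by a trivial fibration $w\colon C\to B$. Then $C\in\Int\cm$, and since $w$ is a homotopy equivalence the identity $\ck(f,K)=\ck(c,K)\circ\ck(w,K)$ underlying Lemma~\ref{lemma:hty-orthog-comp} shows that an object is homotopy orthogonal to $f$ precisely when it is homotopy orthogonal to $c$. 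Replacing each $f$ by its $c$, I may assume $\cf$ consists of cofibrations.

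Next I would move the problem into $\cm_0$. Since the argument of Proposition~\ref{prop:Fhorn} is pointwise in $f$, it applies to the whole class: an object $K\in\Int\cm$ is homotopy orthogonal to $\cf$ if and only if it is injective in $\cm_0$ to the class $\Hor(\cf)$ of all $\cf$-horns. Now $\cm_0$ is locally presentable, because $\cm$ is tractable and hence combinatorial, and $\Hor(\cf)$ is a class of morphisms in it. Vop\v enka's principle, through the cited result of \cite{AR}, therefore provides a \emph{set} $\cg\subseteq\Hor(\cf)$ for which injectivity to $\cg$ and injectivity to $\Hor(\cf)$ define the same objects of $\cm_0$.

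Finally I would descend from this set of horns back to a set of maps. As $\cm$ is cofibrantly generated its generating cofibrations form a set, and each member of $\cg$ is an $f$-horn $i\square f$ for some generating cofibration $i$ and some $f\in\cf$; selecting one such $f$ for every member of $\cg$ yields a set $\mathcal{F}_0\subseteq\cf$. Then $\cg\subseteq\Hor(\mathcal{F}_0)\subseteq\Hor(\cf)$, which traps the injectives of $\Hor(\mathcal{F}_0)$ between those of $\Hor(\cf)$ and those of $\cg$ and so forces all three to coincide. Applying Proposition~\ref{prop:Fhorn} once more, now to the set $\mathcal{F}_0$, shows that for $K\in\Int\cm$ homotopy orthogonality to $\mathcal{F}_0$ coincides with homotopy orthogonality to $\cf$, so that $\cf$-$\Inj=\mathcal{F}_0$-$\Inj$ is a small homotopy orthogonality class. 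I expect the crux, and the only step that is not bookkeeping, to be the appeal to Vop\v enka's principle; the subtle point is that it must return a genuine \emph{subset} of $\Hor(\cf)$ rather than merely some set of morphisms, since it is exactly this that makes the concluding passage to $\mathcal{F}_0\subseteq\cf$ possible.
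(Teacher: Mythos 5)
Your proposal has a genuine gap at its central step: the theorem you invoke does not exist. Under Vop\v enka's principle, what \cite[6.24]{AR} gives is that every \emph{orthogonality} class in a locally presentable category is a small-orthogonality class; the corresponding statement for \emph{injectivity} classes is false outright, and the paper itself records this in Remark~\ref{compare}, citing \cite[Example 4.7]{AR}: complete lattices form an injectivity class in the locally presentable category of posets which is not a small-injectivity class (a ZFC counterexample, so no large-cardinal axiom can repair it). Your reduction steps are fine and match the paper's own machinery --- replacing $\cf$ by cofibrations via Lemma~\ref{lemma:hty-orthog-comp}, and translating homotopy orthogonality of objects of $\Int\cm$ into injectivity in $\cm_0$ with respect to the class $\Hor(\cf)$ of horns via Proposition~\ref{prop:Fhorn} --- but precisely because this translation lands in injectivity rather than orthogonality, there is no set $\cg$ to be extracted by citation, and the concluding bookkeeping with $\mathcal{F}_0\subseteq\cf$ has nothing to run on.

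The paper routes the appeal to Vop\v enka's principle through localization theory instead. Write $\cf$ as the union of an increasing ordinal-indexed chain of sets $\cf_i$, and let $\cw_i$ be the weak equivalences of the $\cf_i$-localized model structure on $\cm_0$. By \cite[2.3]{RT} (which is where Vop\v enka's principle enters), the $\cf$-localized model structure exists and has weak equivalences $\cw=\cup_i\cw_i$; by \cite[2.2]{RT} and \cite[1.7]{Bk} its trivial cofibrations are generated by a \emph{set} $\cj$ dense between the generating cofibrations $\ci$ and $\cw$. Since $\cj$ is a set and the $\cw_i$ form an increasing chain, $\cj$ is already dense between $\ci$ and some $\cw_i$, whence the trivial cofibrations of the $\cf_i$-localization coincide with those of the $\cf$-localization. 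Therefore the $\cf$-local objects equal the $\cf_i$-local objects, i.e.\ $\cf$-$\Inj=\cf_i$-$\Inj$ with $\cf_i$ a set. If you want to salvage your outline, the fix is exactly this: the special class $\Hor(\cf)$-injectives are the fibrant objects of a localized model structure, and it is the existence of that localization for a proper class of maps --- not a general injectivity-class theorem --- that Vop\v enka's principle supplies.
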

\begin{proof}
Let $\cf$ be a class of morphisms in $\Int\cm$. Then $\cf$ is the union of an increasing chain of subsets $\cf_i$ indexed by ordinals. Let $\cw_i$
denote the class of weak equivalences in the $\cf_i$-localized model structure on $\cm_0$. Following \cite[2.3]{RT}, there exists the $\cf$-localized model
structure whose weak equivalences are $\cw=\cup\cw_i$. Let $\ci$ be a set of generating cofibrations in $\cm$. Following \cite[2.2]{RT} and \cite[1.7]{Bk}, trivial cofibrations in the $\cf$-localized model structure are generated by a set $\cj$ dense between $\ci$ and $\cw$. Since $\cj$ is dense between $\ci$
and some $\cw_i$, trivial cofibrations in the $\cf_i$-localized model structure coincide with those in the $\cf$-localized one (see \cite[1.7]{Bk} again).
Hence $\cf$-local objects coincide with $\cf_i$-local objects and thus $\cf$-$\Inj=\cf_i$-$\Inj$.
\end{proof}

\begin{rem}\label{compare}
This generalizes the fact that, under Vop\v enka's principle, any orthogonality class in a locally presentable category is a small orthogonality class \cite[6.24]{AR}. On the other hand, the corresponding statement for injectivity classes is false.  For example, complete lattices form an injectivity class in posets which is not 
a small-injectivity class (see \cite[Example 4.7]{AR}). 
\end{rem}

\section{Homotopy weighted colimits}\label{sect:hcolim}

Recall that, given a $\cv$-category $\ck$, a colimit $G\ast S$ of a diagram $S\colon\cd\to\ck$ weighted by $G\colon\cd\op\to\cv$ is defined by a natural isomorphism
$$
\ck(G\ast S,-)\cong[\cd\op,\cv](G,\ck(S,-)).
$$
If $\cv$ is a monoidal model category satisfying the monoid axiom, the $\cv$-category $[\cd\op,\cv]$ may be equipped with the projective
model $\cv$-category structure. A cofibrant object in $[\cd\op,\cv]$ will be called a {\em cofibrant weight}, and we shall use the term ``cofibrant colimit'' to mean a weighted colimit for which the weight is cofibrant. We write $\Phi(\cd)$ for the full sub-$\cv$-category of $[\cd\op,\cv]$ consisting of cofibrant weights. We are going to show that $\Phi(\cd)$ is closed under {\em cofibrant colimits}, so that the class of cofibrant weights is {\em saturated} (or {\em closed} in the original terminology of \cite{Albert-Kelly}). It then follows that $\Phi(\cd)$ is the free cocompletion of $\cd$ under cofibrant colimits.

\begin{propo}\label{prop:saturated}
The class $\Phi$ of cofibrant weights is saturated.
\end{propo}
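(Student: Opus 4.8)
The plan is to verify directly the two conditions that characterize a saturated (closed) class of weights in the sense of \cite{Albert-Kelly}: that for every small $\cv$-category $\cd$ the full sub-$\cv$-category $\Phi(\cd)\subseteq[\cd\op,\cv]$ of cofibrant weights contains the representables and is closed under $\Phi$-colimits formed in $[\cd\op,\cv]$. The opposite inclusion $\Phi\subseteq\Phi^{*}$ (into the closure $\Phi^{*}$ of the representables under $\Phi$-colimits) is automatic: by the density formula any weight $W$ is recovered as the colimit $W\ast Y$ of the Yoneda embedding $Y\colon\cd\to[\cd\op,\cv]$ weighted by $W$ itself, and $Y$ lands among representables. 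So it suffices to establish the two conditions, which give $\Phi^{*}(\cd)\subseteq\Phi(\cd)$ since $\Phi^{*}$ is the smallest such class.

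That each representable $\cd(-,D)$ is a cofibrant weight is quick: evaluation $\ev_D\colon[\cd\op,\cv]\to\cv$ is right Quillen for the projective structure (its fibrations and trivial fibrations being pointwise), so its left adjoint $\cd(-,D)\cdot(-)\colon\cv\to[\cd\op,\cv]$ is left Quillen, and since $I$ is cofibrant so is $\cd(-,D)=\cd(-,D)\cdot I$. For the closure, let $\ce$ be a small $\cv$-category, $W\colon\ce\op\to\cv$ a cofibrant weight, and $S\colon\ce\to\cm:=[\cd\op,\cv]$ a diagram each of whose values $SE$ is cofibrant; I must show $W\ast S$ is cofibrant in $\cm$. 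The key point is that $-\ast S\colon[\ce\op,\cv]\to\cm$ is a left adjoint (to $\cm(S,-)$), hence cocontinuous: it preserves copowers, pushouts, transfinite composites and retracts, and carries the initial presheaf to the initial object $\emptyset$ of $\cm$. The generating projective cofibrations of $[\ce\op,\cv]$ are the maps $\ce(-,E)\cdot i$ with $i\colon X\to Y$ a generating cofibration of $\cv$; using the co-Yoneda identification $\ce(-,E)\ast S\cong SE$ and cocontinuity, such a map is sent to the copower $i\cdot SE\colon X\cdot SE\to Y\cdot SE$.

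It then remains only to see that $i\cdot SE$ is a cofibration in $\cm$, and this is the single place where the model $\cv$-category axiom and the cofibrancy of $SE$ enter. Indeed $i\cdot SE$ is precisely the pushout-product $i\,\square\,(\emptyset\to SE)$ of the cofibration $i$ in $\cv$ with the cofibration $\emptyset\to SE$ in $\cm$: the relevant pushout degenerates because the copower of any object of $\cv$ with $\emptyset$ is again $\emptyset$, so $X\cdot SE$ is itself the pushout. By the compatibility condition defining a model $\cv$-category, $i\,\square\,(\emptyset\to SE)$ is a cofibration. Now since $W$ is cofibrant, $\emptyset\to W$ is a retract of a transfinite composite of pushouts of the generating cofibrations $\ce(-,E)\cdot i$; applying the cocontinuous functor $-\ast S$ exhibits $\emptyset\to W\ast S$ as a retract of a transfinite composite of pushouts of cofibrations of the form $i\cdot SE$, hence as a cofibration. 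Thus $W\ast S$ is cofibrant, as required.

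I expect the main obstacle to be exactly the identification of $(\ce(-,E)\cdot i)\ast S$ with the copower $i\cdot SE$ together with the verification that the latter is a cofibration; everything else (the density formula, cocontinuity of $-\ast S$, and closure of cofibrations under the cellular constructions) is formal. One should also take care that the projective structure on $[\ce\op,\cv]$ is cofibrantly generated---guaranteed here by the monoid axiom together with $\cv$ being combinatorial---so that the cofibrant object $W$ really is a retract of a cellular one built from the generating cofibrations.
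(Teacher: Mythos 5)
Your proof is correct, but it takes a genuinely different route from the paper's. The paper proves the key closure step (a cofibrant colimit $G\ast S$ of cofibrant weights $S\colon\cd\to[\cc\op,\cv]$ is again cofibrant) by a direct lifting argument: a lifting problem of $G\ast S$ against a trivial fibration $p$ transposes, via the adjunction $-\ast S\dashv[\cc\op,\cv](S,-)$, to a lifting problem of $G$ against $[\cc\op,\cv](S,p)$, which is a pointwise trivial fibration because each $SD$ is cofibrant and $[\cc\op,\cv]$ is a model $\cv$-category; cofibrancy of $G$ then supplies the lift. This is the pullback-hom form of the enriched model axiom, and it needs no explicit description of generating cofibrations. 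You instead run the cellular argument: you identify the generating projective cofibrations $\ce(-,E)\cdot i$, compute $(\ce(-,E)\cdot i)\ast S\cong i\cdot SE$ by co-Yoneda and cocontinuity, recognize $i\cdot SE$ as the pushout-product $i\,\square\,(\emptyset\to SE)$, and propagate through retracts, pushouts and transfinite composites. This is the pushout-product form of the same axiom, and it proves the stronger statement that $-\ast S$ preserves \emph{all} cofibrations, not just cofibrant objects, at the cost of invoking cofibrant generation of the projective structure and the retract-of-cellular characterization. A further difference in scope: you explicitly verify the other ingredients of saturation in the sense of \cite{Albert-Kelly} (representables are projectively cofibrant, and $\Phi\subseteq\Phi^{*}$ via the density formula $W\cong W\ast Y$), which the paper leaves implicit, proving only the closure-under-$\Phi$-colimits condition. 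Both arguments are sound; the paper's is shorter, yours is more self-contained regarding what saturation actually requires.
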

\begin{proof}
Let $\cc$ be a small \cv-category. Suppose that $S\colon\cd\to[\cc\op,\cv]$ takes values in $\Phi(\cc)$, and that $G\colon\cd\op\to\cv$ is cofibrant. We must show that 
$G\ast S$ is in $\Phi(\cc)$. If $p\colon A\to B$ is a trivial fibration in $[\cc\op,\cv]$, and $u\colon G\ast S\to B$, we must find a lifting 
of $u$ through $p$. To give $u$ is equally to give $u'\colon G\to [\cc\op,\cv](S,B)$ in $[\cd\op,\cv]$, and to find a lifting of $u$ 
is equivalent to finding a lifting of $u'$ through $[\cc\op,\cv](S,p)\colon[\cc\op,\cv](S,A)\to[\cc\op,\cv](S,B)$. Since $G$ is cofibrant, 
this will be possible provided that $[\cc\op,\cv](S,p)$ is a trivial fibration in $[\cd\op,\cv]$; that is, provided that 
$[\cc\op,\cv](SD,p)\colon[\cc\op,\cv](SD,A)\to[\cc\op,\cv](SD,B)$ is a trivial fibration in $\cv$ for each $D\in\cd$. But $p\colon A\to B$ is a trivial fibration, and each 
$SD$ is cofibrant, so this holds because $[\cc\op,\cv]$ is a model $\cv$-category.
\end{proof}

\begin{defi}\label{defn:hty-colimit}
Let $\cv$ be a monoidal model category satisfying the monoid axiom, $\ck$ a fibrant $\cv$-category, $S\colon\cd\to\ck$ a diagram, 
and $G\colon\cd\op\to\cv$ a cofibrant weight.
Then a {\em homotopy colimit} 
of $S$ weighted by $G$ is an object $G\ast_h S$ equipped with a natural transformation
$$
\beta\colon\ck(G\ast_h S,-)\to[\cd\op,\cv](G,\ck(S,-))
$$
whose components are weak equivalences.
\end{defi} 

By the (enriched) Yoneda lemma, the natural transformation $\beta$ in the definition of homotopy colimit corresponds to a cocone $\delta\colon G\to \ck(S,G*_h S)$.
We now group together a list of facts about the existence and uniqueness of homotopy colimits.

\begin{propo}\label{prop:hcolimit-uniqueness}
  Let \cv be a monoidal model category satisfying the monoid axiom, let \ck be a fibrant \cv-category, let $G,H\colon\cd\op\to\cv$ be cofibrant weights, and let $S,T\colon \cd\to\ck$ be \cv-functors.
  \begin{enumerate}
\item If  the weighted colimit $G*S$ exists, then it is a homotopy colimit $G*_h S$.
\item Weighted homotopy colimits are determined up to homotopy equivalence.
\item If $\phi\colon G\to H$ is a weak equivalence, then a homotopy colimit $H*_h S$ exists if and only if  $G*_h S$ does so, and they then agree.
\item If $\delta\colon G\to\ck(G*_h S,S)$ exhibits $G*_h S$ as the homotopy colimit, then so does any morphism in $[\cd\op,\cv]$ which is homotopic to $\delta$.
\item If $\psi\colon S\to T$ is a pointwise homotopy equivalence, then a homotopy colimit $G*_h S$ exists if and only if $G*_h T$ does so, and they then agree.
  \end{enumerate}
\end{propo}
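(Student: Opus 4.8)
The plan is to phrase everything through the Yoneda correspondence of Definition~\ref{defn:hty-colimit}, which matches a cocone $\delta\colon G\to\ck(S,L)$ with the natural transformation $\beta\colon\ck(L,-)\to[\cd\op,\cv](G,\ck(S,-))$, and to use repeatedly two standard consequences of the model $\cv$-category structure on $[\cd\op,\cv]$ (both via the pushout-product axiom and Ken Brown's lemma): the enriched hom $[\cd\op,\cv](-,-)$ sends a weak equivalence between cofibrant weights, with fibrant second variable, to a weak equivalence, and sends a weak equivalence between fibrant objects, with cofibrant first variable, to a weak equivalence. Note that $\ck(S,C)$ is fibrant in $[\cd\op,\cv]$ for every $C$ because $\ck$ is fibrant. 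I would prove the parts in the order (1), (2), (4), and then (3) and (5) together, since the last two depend on (4). Part (1) is immediate: if $G\ast S$ exists, its defining isomorphism has invertible, hence weakly equivalent, components, so it already serves as a $\beta$.

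For (2), let $(L,\beta)$ and $(L',\beta')$ both be homotopy colimits of $S$ weighted by $G$. Applying $U=\Ho\cv(I,P-)$ from Definition~\ref{defn:hoK} componentwise turns each weak equivalence $\beta_C,\beta'_C$ into a bijection, so $U\beta$ and $U\beta'$ are natural isomorphisms of $\Set$-valued functors on $\ho\ck$; that these descend from $\ck_0$ to $\ho\ck$ uses that $\ck$ is fibrant, via Remark~\ref{rmk:hty-equiv-symmetry}. Hence $\ho\ck(L,-)\cong\ho\ck(L',-)$, and ordinary Yoneda yields an isomorphism $L'\to L$ in $\ho\ck$. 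By Remark~\ref{rmk:hty-equiv-symmetry} this isomorphism is induced by a morphism of $\ck_0$, which is then a homotopy equivalence $L\simeq L'$.

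For (4), write $L=G\ast_h S$ and let $\delta,\delta'\colon G\to\ck(S,L)$ be homotopic cocones. Each yields its $\beta$ as $\delta^\ast\circ m$, where $m\colon\ck(L,-)\to[\cd\op,\cv](\ck(S,L),\ck(S,-))$ is the enriched functoriality of $\ck(S,-)$ and $\delta^\ast=[\cd\op,\cv](\delta,\ck(S,-))$. Applying $[\cd\op,\cv](-,\ck(S,C))$ to a good cylinder on the cofibrant weight $G$ produces a good path object on the fibrant object $[\cd\op,\cv](G,\ck(S,C))$, and this exhibits $\delta^\ast$ and $(\delta')^\ast$, hence after precomposition with $m$ also $\beta_C$ and $\beta'_C$, as right homotopic through that path object. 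Both projections of a path object are weak equivalences, so two-out-of-three shows $\beta_C$ is a weak equivalence if and only if $\beta'_C$ is; thus a homotopic cocone again exhibits a homotopy colimit.

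Parts (3) and (5) run in parallel, and contain the main obstacle. In (3) a weak equivalence $\phi\colon G\to H$ of cofibrant weights gives weak equivalences $w_C=[\cd\op,\cv](\phi,\ck(S,C))$; in (5) a pointwise homotopy equivalence $\psi\colon S\to T$ gives, by Proposition~\ref{prop:hty-equiv}, a pointwise weak equivalence $\ck(\psi,C)$ between fibrant objects, and hence weak equivalences $v_C=[\cd\op,\cv](G,\ck(\psi,C))$. Either way one obtains a natural transformation with weakly equivalent components between the two functors $[\cd\op,\cv](H,\ck(S,-))$ and $[\cd\op,\cv](G,\ck(S,-))$ (resp. the $T$- and $S$-versions) defining the respective homotopy colimits. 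One direction is easy: composing a given $\beta$ with it preserves weak equivalence of components, producing the other homotopy colimit. The hard direction is the reverse, where the other homotopy colimit must be manufactured across a weak equivalence that cannot be inverted strictly. Here I would lift the cocone up to homotopy: since the relevant hom is a weak equivalence between fibrant objects and $I$ is cofibrant, it is bijective on homotopy classes of maps out of the cofibrant weight, so the given cocone is homotopic to the image of a new cocone $\delta^H$ (resp. $\delta^T$). The corresponding $\beta^H$ then satisfies $w\circ\beta^H$ (resp. $v\circ\beta^T$) equal to the $\beta$ of a cocone homotopic to the original, which has weakly equivalent components by (4); two-out-of-three against $w_C$ (resp. $v_C$) then forces $\beta^H$ (resp. $\beta^T$) to have weakly equivalent components, so the other homotopy colimit exists. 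Agreement in each case follows from (2).
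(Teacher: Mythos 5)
Your proposal is correct. On parts (1), (4) and (5) it essentially reproduces the paper's own argument: your good cylinder on $G$ is exactly the paper's pair of trivial cofibrations $i,j\colon G\to G'$ in (4), and your treatment of (5) — homotopy-lifting the cocone through $\psi^*$, replacing $\delta$ by a homotopic cocone via (4), then two-out-of-three against $[\cd\op,\cv](G,\ck(\psi,X))$ — is the paper's. Where you genuinely diverge is in (2) and (3). For (2) the paper passes to a small full subcategory $\cx$ of \ck containing the two candidates and the image of $S$, equips $[\cx,\cv]$ with the projective model structure, and argues that the two representables are weakly equivalent fibrant-and-cofibrant objects, hence homotopy equivalent; you instead apply the monoidal functor $U=\Ho\cv(I,P-)$ to the defining transformations, obtain natural isomorphisms of $\Set$-valued functors on $\ho\ck$, and finish with ordinary Yoneda plus Remark~\ref{rmk:hty-equiv-symmetry}. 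Your route is lighter: it needs no smallness device and no auxiliary model structure on a functor category, only monoidality of $U$ and the surjectivity of $\ck_0(A,B)\to\ho\ck(A,B)$ for fibrant \ck. For the delicate direction of (3) the paper factorizes $G+H\to H$ to reduce to a trivial cofibration $\rho\colon G\to K$ and then lifts the transformation $\beta$ itself through the pointwise trivial fibration $[\cd\op,\cv](\rho,\ck(S,-))$, invoking cofibrancy (really a Yoneda-style lifting property) of the representable $\ck(G*_hS,-)$; you instead lift the cocone $\delta_G$ up to homotopy through $\phi^*$ — legitimate, since $\phi^*$ is a weak equivalence between fibrant objects, hence bijective on homotopy classes of maps out of $I$, with Proposition~\ref{prop:two-notions} identifying the two homotopy relations — and conclude by (4) and two-out-of-three. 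This makes (3) structurally parallel to (5), reuses (4) in place of a separate factorization-and-lifting argument, and sidesteps the slightly delicate issue of what cofibrancy means for a functor on the possibly large \ck; the paper's mapping-cylinder argument buys a genuine (not merely up-to-homotopy) lift of $\beta$. Your reordering, proving (4) before (3), is harmless since the paper's (4) nowhere uses (3).
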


\proof 
\noindent (1)
 If the actual colimit $G*S$ exists then we have a natural isomorphism $\beta$, not just a natural weak equivalence.

\noindent (2)
  Let $K_1$ and $K_2$ be homotopy colimits of $S$ weighted by $G$. 
Let $J\colon\cx\to\ck$ be a small full subcategory of $\ck$ containing $K_1$, $K_2$, and the image of $S$. The induced morphisms 
$$\xymatrix @R0pc {
\cx(K_1,-) \ar@{=}[r] &  \ck(K_1,J) \ar[r]^-{\beta_1} &  [\cd\op,\cv](G,\ck(S,J)) \\
\cx(K_2,-) \ar@{=}[r] & \ck(K_2,J) \ar[r]^-{\beta_2} &  [\cd\op,\cv](G,\ck(S,J)) }
$$
are pointwise weak equivalences in $[\cx,\cv]$, so $\cx(K_1,-)$ and $\cx(K_2,-)$ are weakly equivalent in $[\cx,\cv]$; but they are also cofibrant and fibrant objects, and so they are homotopy equivalent. Thus $K_1$ and $K_2$ are homotopy equivalent in $\cx$, and so also in $\ck$. This proves that any two choices of homotopy colimit are homotopy equivalent. Similarly, any object homotopy equivalent to a homotopy colimit can itself be used as a homotopy colimit.

\noindent (3)
Since $\phi\colon G\to H$ is a weak equivalence between cofibrant objects and $\ck(S,X)$ is fibrant for all $X\in\ck$, also  $[\cd\op,\cv](\phi,\ck(S,X))$ is a weak equivalence for all $X\in\ck$. Thus if $H*_h S$ exists then we have weak equivalences 
$$\xymatrix @C1pc {
\ck(H*_h S,X) \ar[r] & [\cd\op,\cv](H,\ck(S,X)) 
\ar[r] 
& 
[\cd\op,\cv](G,\ck(S,X)) }$$
natural in $X$, 
and so $H*_h D$ also serves as a homotopy colimit $G*_h D$. 

The converse is more delicate. We need to show that $G*_h S$ will serve as $H*_h S$.
Form the coproduct $G+H$ and the map $G+H\to H$ induced by $\phi$ and $1_H$, and factorize it as a cofibration $\alpha\colon G+H\to K$ followed by a trivial fibration $\tau\colon K\to H$. Restricting $\alpha$ to $G$ and $H$, we obtain a factorization $\phi=\tau\rho$ of $\phi$ as a trivial cofibration $\rho\colon G\to K$ followed by a trivial fibration $\tau\colon K\to H$, as well as a trivial cofibration $\sigma\colon H\to K$ which is also a section of $\tau$. By the first part of the proof and the existence of $\sigma$, we know that if $K*_h S$ exists it will also serve as $H*_h S$. Thus it will suffice to show that if $G*_h S$ exists, then it will serve as $K*_h S$. Suppose then that 
$$\xymatrix{
\ck(G*_h S,-) \ar[r]^-{\beta} & [\cd\op,\cv](G,\ck(S,-)) }$$
exhibits $G*_h S$ as a homotopy colimit. For each $X\in\ck$ we have the presheaf $\ck(S,X)\colon\cd\op\to\cv$, and this is fibrant in $[\cd\op,\cv]$. On the other hand, $\rho\colon G\to K$ is a trivial cofibration. It follows that 
$$\xymatrix @C6pc {
[\cd\op,\cv](K,\ck(S,X)) \ar[r]^{[\cd\op,\cv](\rho,\ck(S,X))} & [\cd\op,\cv](G,\ck(S,X)) }$$
is a trivial fibration. But this was true for all $X\in\ck$, and so now in the diagram
$$\xymatrix @C3pc {
\ck(G*_h S,-) \ar@{.>}[r] \ar[dr]_{\beta} & [\cd\op,\cv](K,\ck(S,-)) \ar[d]^{[\cd\op,\cv](\rho,\ck(S,-))} \\
& [\cd\op,\cv](\ck(G,\ck(S,-))) }$$
the vertical arrow is a trivial fibration and $\ck(G*_h S,-)$ is cofibrant, and so there exists a lift, displayed in the diagram as a dotted arrow, and this will be a pointwise weak equivalence since the other two morphism are so.

\noindent (4)
Suppose that $\delta\colon G\to \ck(C,S)$ exhibits $C$ as the homotopy colimit $G*_h S$, and that $\delta'\colon G\to\ck(C,S)$ is homotopic to $\delta$. Since $G$ is cofibrant and $\ck(C,S)$ is fibrant, there exist trivial cofibrations $i,j: G\to G'$ and a morphism $\gamma$ making the diagram 
$$\xymatrix{
G \ar[r]^i \ar[dr]_{\delta} & G' \ar[d]^{\gamma} & G \ar[l]_{j} \ar[dl]^{\delta'} \\
& \ck(C,S) }$$
commute. In the diagram 
$$\xymatrix{
&& [\cd\op,\cv](G,\ck(S,A)) \\
\ck(C,A) \ar[r]^-{\ck(S,-)} & [\cd\op,\cv](\ck(S,C),\ck(S,A)) \ar[r]^{\gamma^*} \ar[ur]^{\delta^*} \ar[dr]_{(\delta')^*} & [\cd\op,\cv](G',\ck(S,A)) \ar[d]^{j^*} \ar[u]_{i^*} \\
&& [\cd\op,\cv](G,\ck(S,A)) }$$
the vertical map $i^*$ is a weak equivalence because $\ck(S,A)$ is fibrant and $i\colon G\to G'$ is a trivial cofibration; similarly $j^*$ is a weak equivalence. Thus the composite $\delta^*\ck(S,-)$ is a weak equivalence if and only if the composite $(\delta')^*\ck(S,-)$ is one. 

\noindent (5)
 If $\psi\colon S\to T$ is a pointwise homotopy equivalence, then 
$$\xymatrix{ \ck(T,X)\ar[r]^{\ck(\psi,X)} & \ck(S,X) }$$ 
is a weak equivalence in $[\cd\op,\cv]$ between fibrant objects, and so 
$$\xymatrix @C6pc { [\cd\op,\cv](G,\ck(T,X)) \ar[r]^{ [\cd\op,\cv](G,\ck(\psi,X))} & [\cd\op,\cv](G,\ck(S,X)) }$$ 
is a weak equivalence in \cv between fibrant objects, for any $X\in\ck$. Thus if the homotopy colimit $G*_h T$ exists, then we have a composite weak equivalence 
$$\xymatrix{
\ck(G*_h T,X) \ar[r] & [\cd\op,\cv](G,\ck(T,X)) \ar[r] & [\cd\op,\cv](G,\ck(S,X)) }$$ 
natural in $X$, and so $G*_h T$ serves as $G*_h S$.

Once again, the converse is more delicate. Suppose that $\delta\colon G\to \ck(S,C)$ exhibits $C$ as the homotopy colimit $G*_h S$. Since $\psi\colon S\to T$ is a pointwise homotopy equivalence, the morphism $\psi^*\colon \ck(T,C)\to\ck(S,C)$ in $[\cd\op,\cv]$ induced by composition with $\psi$ is a pointwise weak equivalence between fibrant objects. Since $G$ is cofibrant, $\delta\colon G\to \ck(S,C)$ can be lifted, up to homotopy, through $\psi^*$, say by $\gamma\colon G\to \ck(T,C)$. But $\psi^*\gamma$ is homotopic to $\delta$, and so by the previous part it also exhibits $C$ as the homotopy colimit $G*_h S$. Thus we may as well replace $\delta$ by $\psi^*\gamma$, and then regard $\gamma$ as a genuine lifting of $\delta$ through $\psi^*$. 

Now consider the diagram
$$\xymatrix{
\ck(C,A) \ar[r]^-{\ck(S,-)} \ar[d]_{\ck(T,-)} & [\cd\op,\cv](\ck(S,C),\ck(S,-)) \ar[d]^{(\psi^*)^*} \\
[\cd\op,\cv](\ck(T,C),\ck(T,-)) \ar[r]^-{(\psi^*)_*} \ar[d]_{\gamma^*} & [\cd\op,\cv](\ck(T,C),\ck(S,-)) \ar[d]^{\gamma^*} \\
[\cd\op,\cv](G,\ck(T,A)) \ar[r]^{(\psi^*)_*} & [\cd\op,\cv](G,\ck(S,A)). }$$
The upper composite is the weak equivalence $\delta^*\ck(S,-)$, thus the lower composite is also a weak equivalence. But the bottom horizontal arrow $(\psi^*)_*$ is a weak equivalence because $G$ is cofibrant and $\psi^*\colon\ck(T,A)\to\ck(S,A)$ is a weak equivalence between fibrant objects. Thus the composite $\gamma^*\ck(T,-)$ is a weak equivalence, and so $\gamma$ exhibits $C$ as the homotopy colimit $G*_h T$.
\endproof

\begin{rem}\label{rmk:non-cofibrant-weights}
  We have defined homotopy weighted colimits only for cofibrant weights, but since the homotopy weighted colimit depends on the weight only up to weak equivalence, there is no danger in defining the homotopy colimit of $S$ weighted by an arbitrary $G$ to be the homotopy colimit weighted by the cofibrant replacement $QG$ of $G$. We shall do this when convenient. 
\end{rem}

Dually, given a fibrant $\cv$-category $\ck$, the homotopy limit $\{G,S\}_h$ of a diagram $S\colon\cd\to\ck$ weighted by $G\colon\cd\to\cv$ is defined by a natural transformation
$$
\beta\colon\ck(-,\{G,S\}_h)\to[\cd,\cv](G,\ck(-,S))
$$
whose components are weak equivalences. 

All that was said about homotopy colimits applies to homotopy limits. In particular, the natural transformation $\beta$ corresponds to a cone
$$
\delta\colon G\to\ck(\{G,S\}_h,S).
$$

\begin{theo}\label{thm:colimit-existence}
Let $\cv$ be a monoidal model category satisfying the monoid axiom and let $\cm$ be a model $\cv$-category. Then $\Int\cm$ has weighted homotopy colimits
and weighted homotopy limits.
\end{theo}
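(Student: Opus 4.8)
The plan is to construct homotopy colimits in $\Int\cm$ from genuine weighted colimits in the ambient model $\cv$-category $\cm$, followed by a fibrant replacement; homotopy limits are then handled dually. Fix a diagram $S\colon\cd\to\Int\cm$ and a cofibrant weight $G\colon\cd\op\to\cv$. Since $\cm$ is cocomplete as a $\cv$-category, I would first form the ordinary weighted colimit $C=G*S$ in $\cm$; by its universal property it satisfies a $\cv$-natural isomorphism $\cm(C,X)\cong[\cd\op,\cv](G,\cm(S,X))$ for all $X\in\cm$.

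The crucial point is that $C$ is cofibrant in $\cm$. I would see this by showing that the $\cv$-functor $-*S\colon[\cd\op,\cv]\to\cm$ is left Quillen for the projective model structure on $[\cd\op,\cv]$. As it is cocontinuous, it suffices to treat the generating (trivial) cofibrations, which have the form $\cd(-,D)\cdot i$ for $i\colon X\to Y$ a generating (trivial) cofibration of $\cv$. Using $\cd(-,D)*S\cong SD$ and the fact that $-*S$ preserves copowers, one computes $(\cd(-,D)\cdot i)*S\cong i\cdot SD$, the copower of $i$ with $SD$. Because each $SD$ is cofibrant in $\cm$, this map is the pushout-product $i\square(\emptyset\to SD)$, which the model $\cv$-category axiom makes a cofibration, trivial whenever $i$ is. Hence $-*S$ is left Quillen, and since $G$ is cofibrant so is $C=G*S$.

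I would then fibrant-replace $C$: factorize $C\to 1$ as a trivial cofibration $r\colon C\to RC$ followed by a fibration, so that $RC$ is fibrant and, being the target of a cofibration out of the cofibrant $C$, also cofibrant; thus $RC\in\Int\cm$. To see that $RC$ serves as $G*_hS$, for each $X\in\Int\cm$ I would consider the $\cv$-natural composite
$$\Int\cm(RC,X)=\cm(RC,X)\xrightarrow{\ \cm(r,X)\ }\cm(C,X)\xrightarrow{\ \cong\ }[\cd\op,\cv](G,\cm(S,X)),$$
noting that $\Int\cm(S,X)=\cm(S,X)$ as presheaves because $\Int\cm$ is full in $\cm$. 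Its components are weak equivalences: the second map is the universal-property isomorphism, while $\cm(r,X)$ is even a trivial fibration, since $r$ is a trivial cofibration and $X$ is fibrant (the enrichment axiom of a model $\cv$-category in its adjoint, hom-valued form). This exhibits $RC$ as a homotopy colimit in the sense of Definition~\ref{defn:hty-colimit}.

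Finally, homotopy limits follow by the dual argument: form the genuine weighted limit $\{G,S\}$ in $\cm$, which is fibrant because the weighted-limit functor for the cofibrant weight $G$ is right Quillen (its left adjoint $G\cdot(-)\colon\cm\to[\cd,\cm]$, sending $A$ to $D\mapsto GD\cdot A$, is left Quillen as $G$ is a cofibrant weight) and $S$ is pointwise, hence projectively, fibrant; then cofibrant-replace by a trivial fibration $q\colon Q\{G,S\}\to\{G,S\}$ out of a fibrant-and-cofibrant object, and check that $\cm(X,q)$ is a weak equivalence for $X$ cofibrant. The only real work is the cofibrancy of $G*S$ (and, dually, the fibrancy of $\{G,S\}$): it is precisely here that the hypothesis that $G$ is a \emph{cofibrant} weight, together with the pushout-product axiom and the pointwise (co)fibrancy of the diagram, is used, and it is what keeps the whole construction inside $\Int\cm$.
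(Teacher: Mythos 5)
Your construction is the same as the paper's: form the genuine weighted colimit $G*S$ in $\cm$, prove it is cofibrant, take a fibrant replacement along a trivial cofibration $r$, and verify the universal property using that $\cm(r,X)$ is a trivial fibration for each fibrant $X$; dually for limits via a cofibrant replacement of $\{G,S\}$. Your cofibrancy argument for $G*S$ is a mild repackaging of the paper's: the paper lifts directly against trivial fibrations, using the adjoint form of the pushout-product axiom (namely that $\cm(SD,p)$ is a trivial fibration when $SD$ is cofibrant and $p$ is a trivial fibration), whereas you check that $-*S$ is left Quillen on the generating projective cofibrations $\cd(-,D)\cdot i$. That variant is fine, since $\cv$ is cofibrantly generated by the paper's standing assumptions, so the projective structure on $[\cd\op,\cv]$ does have those generators.

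The limit half, however, contains a genuine flaw in justification. You argue that $\{G,S\}$ is fibrant because $G\cdot(-)\colon\cm\to[\cd,\cm]$ is left Quillen for the projective model structure on $[\cd,\cm]$ and $S$ is ``projectively fibrant''. But the theorem's hypotheses place no cofibrant-generation (or combinatoriality) assumption on $\cm$: it is an arbitrary model $\cv$-category, and the projective model structure on $[\cd,\cm]$ need not exist; the monoid-axiom hypothesis only provides projective structures on $\cv$-valued presheaf categories such as $[\cd,\cv]$ and $[\cd\op,\cv]$. (Even granting its existence, the assertion that cofibrancy of $G$ in $[\cd,\cv]$ makes $G\cdot(-)$ left Quillen into $[\cd,\cm]$ is itself a two-variable Quillen-bifunctor claim that needs proof.) The repair is easy and keeps everything inside $[\cd,\cv]$, exactly dual to your colimit argument: for a trivial cofibration $j\colon A\to B$ in $\cm$, extending maps $A\to\{G,S\}$ along $j$ corresponds, by the defining adjunction, to lifting maps $G\to\cm(A,S)$ through $\cm(j,S)\colon\cm(B,S)\to\cm(A,S)$; each component $\cm(j,SD)$ is a trivial fibration in $\cv$ because $SD$ is fibrant and $j$ is a trivial cofibration, so $\cm(j,S)$ is a pointwise, hence projective, trivial fibration in $[\cd,\cv]$, and the lift exists because $G$ is projectively cofibrant. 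This is what the paper's phrase ``everything is a formal consequence'' amounts to.
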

\begin{proof}
I. First we show that $\Int\cm$ has weighted homotopy colimits. 
Let $G\colon\cd^{op}\to\cv$ be a cofibrant weight and $S\colon\cd\to\Int\cm$ a diagram. Form the colimit $G\ast S$ in $\cm$. First we prove that $G\ast S$ is cofibrant; this follows closely the proof of Proposition~\ref{prop:saturated}, which is essentially a special case. Let $p\colon A\to B$ be a trivial fibration in $\cm$. We must show that every morphism $f\colon G\ast S\to B$ lifts through $p$. But to give such an $f$ is equivalently to give $f'\colon G\to\cm(S,B)$ in $[\cd\op,\cv]$, and to give a lifting 
of $f$ through $p$ is to give a lifting of $f'$ through $\cm(S,p)\colon\cm(S,A)\to\cm(S,B)$. Since $G$ is cofibrant, this will exist provided 
that $\cm(S,p)$ is a trivial fibration in $[\cd\op,\cv]$; that is, provided that $\cm(SD,p)\colon\cm(SD,A)\to\cm(SD,B)$ is a trivial fibration for every $D\in\cd$. 
But $SD$ is cofibrant by assumption, and $p\colon E\to B$ is a trivial fibration, so $\cm(SD,p)$ is a trivial fibration since $\cm$ is a model $\cv$-category. 

Now, take a fibrant replacement $r\colon G\ast S\to R(G\ast S)$ via a trivial cofibration $r$. We have natural transformations
$$
\xymatrix{
\Int\cm(R(G\ast S),A) \ar@{=}[d] & [\cd\op,\cv](G,\Int\cm(S,A)) \\
\cm(R(G\ast S),A) \ar[r]^-{\cm(r,A)} & \cm(G\ast S,A) \cong [\cd\op,\cv](G,\cm(S,A)) 
\ar@{=}[u]    }
$$
and so if $\cm(r,A)$ is a weak equivalence for all $A\in\Int\cm$, then the replacement $R(G\ast S)$ will be the desired homotopy colimit $G\ast_h S$ 
in $\Int\cm$. But $r$ is a trivial cofibration and $A$ is fibrant, so $\cm(r,A)$ is a trivial fibration, and so in particular a weak equivalence. 

II. Turning to limits, everything is a formal consequence; given a cofibrant weight $G\colon\cd\to\cv$ and a diagram $S\colon\cd\to\Int\cm$, the weighted limit $\{G,S\}$ in \cm is fibrant, but need not be cofibrant. The cofibrant replacement $Q\{G,S\}$ of $\{G,S\}$ gives the required homotopy limit $\{G,S\}_h$. 
\end{proof}
 
\begin{rem}
The homotopy limits and colimits in $\Int\cm$ constructed in the proof have various special properties that need not hold for homotopy limits and colimits in general. For example, the pointwise weak equivalences in the definition of homotopy colimits and limits are actually pointwise trivial fibrations. 
Moreover, assumimg that the model category $\cm$ is functorial, the construction of homotopy limits and colimits in $\Int\cm$ is functorial, which need 
not be the case in a general fibrant \cv-category.   We do, however, have the following weak version of functoriality.
\end{rem}

\begin{propo}\label{prop:functoriality}
  Let $f\colon G\to H$ be a morphism between cofibrant weights in $[\cd\op,\cv]$, and consider a diagram  $S\colon\cd\to\ck$ in the fibrant \cv-category \ck for which the homotopy colimits $G*_h S$ and $H*_h S$ exist. Then there is a morphism $f*_h S\colon G*_h S\to H*_h S$ for which the diagram
$$\xymatrix{
G \ar[r]^-{\delta_G} \ar[d]_{f} & \ck(S,G*_h S) \ar[d]^{\ck(S,f*_h S)} \\
H \ar[r]_-{\delta_H} & \ck(S,H*_h S) }$$
in $[\cd\op,\cv]$ commutes up to homotopy.
\end{propo}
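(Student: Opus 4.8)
The plan is to construct $f\ast_h S$ by lifting the composite cocone $\delta_H\circ f\colon G\to\ck(S,H\ast_h S)$ through the structural weak equivalence attached to $G\ast_h S$, and then to extract homotopy-commutativity of the square from the naturality of that weak equivalence. Recall from Definition~\ref{defn:hty-colimit} and the Yoneda correspondence that $G\ast_h S$ carries a \cv-natural transformation $\beta\colon\ck(G\ast_h S,-)\to[\cd\op,\cv](G,\ck(S,-))$ with weak-equivalence components, corresponding to the cocone $\delta_G\colon G\to\ck(S,G\ast_h S)$; similarly $H\ast_h S$ carries $\delta_H\colon H\to\ck(S,H\ast_h S)$. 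Precomposing $\delta_H$ with $f$ gives a cocone $\delta_H f\colon G\to\ck(S,H\ast_h S)$, which I view as an element $I\to[\cd\op,\cv](G,\ck(S,H\ast_h S))$ of the underlying category.

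Next I would examine the component $\beta_{H\ast_h S}\colon\ck(G\ast_h S,H\ast_h S)\to[\cd\op,\cv](G,\ck(S,H\ast_h S))$. Its domain is fibrant in \cv because \ck is a fibrant \cv-category, and its codomain is fibrant because $G$ is cofibrant while $\ck(S,H\ast_h S)$ is pointwise fibrant, hence fibrant in the projective structure on $[\cd\op,\cv]$, so that the hom-object from a cofibrant to a fibrant object is fibrant in a model \cv-category. Thus $\beta_{H\ast_h S}$ is a weak equivalence between fibrant objects of \cv. Since $I$ is cofibrant, such a map is surjective on homotopy classes of morphisms out of $I$ (this is the surjectivity noted in Remark~\ref{rmk:hty-equiv-symmetry}, applied to the weak equivalence $\beta_{H\ast_h S}$). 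I would therefore choose a morphism $f\ast_h S\colon G\ast_h S\to H\ast_h S$ whose image $\beta_{H\ast_h S}\circ(f\ast_h S)$ is homotopic to $\delta_H f$.

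It then remains to verify the square. Evaluating the naturality square of $\beta$ for the morphism $f\ast_h S\colon G\ast_h S\to H\ast_h S$ at the identity element of $\ck(G\ast_h S,G\ast_h S)$ yields the on-the-nose identity $\beta_{H\ast_h S}\circ(f\ast_h S)=\ck(S,f\ast_h S)\circ\delta_G$, since $\beta$ sends the identity to $\delta_G$. Combining this with the relation $\beta_{H\ast_h S}\circ(f\ast_h S)$ homotopic to $\delta_H f$ from the previous step gives that $\ck(S,f\ast_h S)\circ\delta_G$ is homotopic to $\delta_H f$, which is precisely the assertion that the displayed square commutes up to homotopy.

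The main obstacle—really the only delicate point—is that $\beta_{H\ast_h S}$ is merely a weak equivalence and not a fibration, so the cocone $\delta_H f$ cannot be lifted strictly but only up to homotopy; this is exactly why $f\ast_h S$ is determined only up to homotopy and the square commutes only up to homotopy, rather than on the nose. Everything hinges on the standard fact that a weak equivalence between fibrant objects becomes invertible in the homotopy category, so that lifts of maps out of the cofibrant unit $I$ exist up to homotopy. This is the single place where both the fibrancy of \ck and the cofibrancy of $I$ are genuinely used.
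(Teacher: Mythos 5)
Your proof is correct and takes essentially the same route as the paper's: both construct $f\ast_h S$ by lifting the element $\delta_H f$ up to homotopy through the structural weak equivalence $\beta_{H\ast_h S}\colon\ck(G\ast_h S,H\ast_h S)\to[\cd\op,\cv](G,\ck(S,H\ast_h S))$ (a weak equivalence between fibrant objects, using cofibrancy of $I$), and then use the Yoneda/naturality identity $\beta_{H\ast_h S}\circ(f\ast_h S)=\ck(S,f\ast_h S)\circ\delta_G$ to conclude. The only cosmetic difference is that the paper explicitly invokes Proposition~\ref{prop:two-notions} at the end to pass from homotopy of elements $I\to[\cd\op,\cv](G,\ck(S,H\ast_h S))$ in $\cv$ to homotopy of morphisms $G\to\ck(S,H\ast_h S)$ in the model category $[\cd\op,\cv]$, a translation you leave implicit but for which you have already established the requisite cofibrancy of $G$ and fibrancy of $\ck(S,H\ast_h S)$.
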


\proof
In the solid part of the diagram
$$\xymatrix{
\ck(G*_h S,H*_h S) \ar[r]^-{\beta_G} & [\cd\op,\cv](G,\ck(J,H*_h S)) \\
I \ar@{.>}[u]^{f*_h S} \ar[r]_-{\delta_H} & [\cd\op,\cv](H,\ck(J,H*_h S)) \ar[u]_{f^*} }$$
$\beta_G$ is (a component of) the weak equivalence in \cv defining the homotopy colimit $G*_h S$, and $\delta_H$ is the counit of the homotopy colimit $H*_h S$, while $f^*$ is given by composition with $f$. Since $\beta_G$ is a weak equivalence between fibrant objects, and $I$ is cofibrant, there is a factorization $f*_h S$ up to homotopy. By naturality,  the composite of $\beta_G$ and $f*_h S$ is the map $I\to[\cd\op,\cv](G,\ck(J,H*_h S))$ corresponding to the composite 
$$\xymatrix{
G \ar[r]^-{\delta_G} & \ck(S,G*_h S) \ar[rr]^-{\ck(S,f*_h S)} && \ck(S,H*_h S) }$$
whence, in view of Proposition~\ref{prop:two-notions}, the result follows. 
\endproof

 Recall that a {\em conical colimit} in a \cv-category \ck is a colimit $\Delta I*S$ where $S\colon\cj\to\ck$ is a diagram defined on the free \cv-category on an ordinary category and $\Delta I$ is the weight which is constant at the unit object $I$.
  
If $\ck$ is fibrant then we may define the {\em homotopy colimit} $\hocolim S$ of $S$ as the homotopy colimit $K*S$ for a cofibrant replacement $K$ of $\Delta I$; by Proposition~\ref{prop:hcolimit-uniqueness} this is independent of the cofibrant replacement. We have a cocone
$$
\delta\colon K\to\ck(S,\hocolim S).
$$
and the natural transformation $q\colon K\to\Delta I$ induces the comparison morphism
$$
k\colon\hocolim S\to\colim S
$$
 (provided, of course, that both colimits exist). 
Observe that the hom-objects of a $\cv$-category $\cj$ are coproducts of $I$, and so are cofibrant in $\cv$. {\em Homotopy limits} in $\ck$ are de\-fi\-ned as homotopy colimits in $\ck\op$. 

In model $\cv$-categories, our homotopy colimits are weakly equivalent to standard ones provided that the diagram $S$ is objectwise cofibrant \cite[Proposition~1]{V}.  
If $\cm$ is a model $\cv$-category and $S\colon\cj\to\Int\cm$ then $\hocolim S$ is a fibrant replacement of $K\ast S$. This definition 
is classical for $\cv=\SSet$ where $K=B(-,\downarrow\cj)\op$: see \cite{H}.

\begin{propo}\label{prop:hty-filtered}
Let $\cv$ be a $\lambda$-combinatorial monoidal model category satisfying the monoid axiom, $\cm$ a $\lambda$-com\-bi\-na\-to\-rial model $\cv$-category, and \cj  a $\lambda$-filtered ordinary category. Let $\Delta I\colon \overline{\cj}\op\to \cv$ be the functor constant at the unit object $I$, and $q\colon K\to \Delta I$ its cofibrant replacement. Then $q*S\colon K*S\to \Delta I*S$ is a weak equivalence for any $S\colon \overline{\cj}\to\cm$.
\end{propo}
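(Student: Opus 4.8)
The plan is to reduce the statement to two facts guaranteed by the $\lambda$-combinatorial hypotheses (Remark~\ref{rmk:combinatorial}): that the cofibrant replacement functor $Q$ on $[\overline{\cj}\op,\cv]$ preserves $\lambda$-filtered colimits, and that weak equivalences are closed under $\lambda$-filtered colimits. First I would exhibit $\Delta I$ as a $\lambda$-filtered colimit of representables. Since $\overline{\cj}$ is free on $\cj$ one has $\overline{\cj}(i,j)=\cj(i,j)\cdot I$, and because the copower $(-)\cdot I$ is cocontinuous while $\colim_{j\in\cj}\cj(i,j)=1$ (the comma category $i/\cj$ being connected), evaluation at each $i$ shows $\colim_{j\in\cj}\overline{\cj}(-,j)=\Delta I$, a $\lambda$-filtered colimit. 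Each representable $\overline{\cj}(-,j)=\overline{\cj}(-,j)\cdot I$ is projectively cofibrant, being the image of the cofibrant object $I$ under the left Quillen functor $\overline{\cj}(-,j)\cdot(-)$ (left adjoint to evaluation). Applying $Q$ and using that it preserves $\lambda$-filtered colimits, I may take $K=Q\Delta I=\colim_{j}Q\overline{\cj}(-,j)$; since $(-)*S$ is cocontinuous in the weight, this yields $K*S=\colim_j\bigl(Q\overline{\cj}(-,j)*S\bigr)$ and $\Delta I*S=\colim_j\bigl(\overline{\cj}(-,j)*S\bigr)=\colim_j S(j)=\colim S$, with $q*S=\colim_j(q_j*S)$ for $q_j\colon Q\overline{\cj}(-,j)\to\overline{\cj}(-,j)$.

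Next I would settle the case where $S$ is objectwise cofibrant. Then $(-)*S$ is left Quillen, since its right adjoint $C\mapsto\cm(S-,C)$ preserves fibrations and trivial fibrations because each $SD$ is cofibrant and $\cm$ is a model $\cv$-category. Hence each $q_j*S$, being the image under a left Quillen functor of the weak equivalence $q_j$ between the cofibrant weights $Q\overline{\cj}(-,j)$ and $\overline{\cj}(-,j)$, is a weak equivalence, and so $q*S=\colim_j(q_j*S)$ is a weak equivalence as a $\lambda$-filtered colimit of weak equivalences.

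For a general $S$ I would pass to an objectwise cofibrant replacement $S^c\to S$; this is legitimate because a \cv-functor out of the free \cv-category $\overline{\cj}$ is merely an ordinary functor $\cj\to\cm_0$, to which the cofibrant replacement functor of \cm may be applied objectwise. Forming the naturality square relating $q*S^c$, $q*S$ and the two maps induced by $S^c\to S$, the previous paragraph gives that $q*S^c$ is a weak equivalence, and $\Delta I*S^c\to\Delta I*S$ is $\colim S^c\to\colim S$, which is a weak equivalence because $\colim_{\cj}$ preserves the objectwise weak equivalence $S^c\to S$. By two-out-of-three it then remains only to show that the fourth edge $K*S^c\to K*S$ is a weak equivalence.

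That edge is the step I expect to be the main obstacle. The difficulty is that $(-)*S$ is \emph{not} left Quillen once $S$ fails to be objectwise cofibrant, so the terms $Q\overline{\cj}(-,j)*S$ of the defining $\lambda$-filtered colimit need not be weakly equivalent to $S(j)$ and no levelwise comparison of $K*S^c\to K*S$ is available. Crossing this is exactly where the $\lambda$-filtered hypothesis is indispensable: one must use not merely that $\lambda$-filtered colimits preserve weak equivalences but their exactness (commutation with finite limits), which renders the coend $K*S=\int^{j}K(j)\cdot S(j)$ insensitive to the failure of cofibrancy of $S$ --- just as, for $\cj=\mathbb{N}$, the mapping telescope of an arbitrary sequence computes its ordinary colimit up to weak equivalence regardless of cofibrancy. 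I expect the cleanest route is to establish directly that, for $\lambda$-filtered $\cj$, the functor $K*(-)$ preserves all objectwise weak equivalences, thereby reducing the general case to the objectwise cofibrant one treated above; an alternative would be to compare $K*S^c$ with the standard homotopy colimit via \cite[Proposition~1]{V} and transport along $S^c\to S$, but this merely relocates the same obstruction.
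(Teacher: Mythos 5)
Your skeleton matches the paper's Part~II almost exactly: both arguments write $\Delta I$ (and hence $K=Q\Delta I$, using that cofibrant replacement on a $\lambda$-combinatorial presheaf category preserves $\lambda$-filtered colimits) as a $\lambda$-filtered colimit of cofibrant replacements of representables, and both finish by invoking the closure of weak equivalences in $\cm$ under $\lambda$-filtered colimits. Indeed the paper's decomposition --- indexed by subcategories $\cj_h\subseteq\cj$ with a terminal object $t_h$ --- is essentially yours, since the left Kan extension of $\Delta I_h$ along the inclusion is the representable $\overline{\cj}(-,t_h)$. But your proof has a genuine gap, and you have located it yourself: nothing in your argument shows that $K*S^c\to K*S$ is a weak equivalence, equivalently that the pieces $q_j*S\colon Q\overline{\cj}(-,j)*S\to S(j)$ are weak equivalences when $S$ is not objectwise cofibrant. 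Neither of your proposed remedies repairs this: proving that ``$K*(-)$ preserves objectwise weak equivalences'' is, by your own naturality square and two-out-of-three, equivalent to the proposition itself, so it is not a reduction; and ``exactness of $\lambda$-filtered colimits'' is not an argument here, since weak equivalences in a general model $\cv$-category are not detected by any finite-limit construction with which filtered colimits would commute (your telescope analogy exploits special features of $\SSet$ and of chain complexes, not a formal property).

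The idea you are missing is the paper's Part~I, which is precisely how it crosses this obstacle. For the pieces, the weight being replaced is representable: $\Delta I_h=\overline{\cj}_h(-,t_h)$ for $\cj_h$ with terminal object $t_h$, and representables are themselves projectively cofibrant. Hence $q_h\colon K_h\to\Delta I_h$ is a weak equivalence between cofibrant weights, and so is a \emph{homotopy equivalence}: it has a section, and the composite of section and retraction is homotopic to the identity via a homotopy through the copower by an interval of $\cv$. A homotopy equivalence in this sense is preserved by \emph{every} $\cv$-functor --- no left Quillen condition, hence no cofibrancy of $S$, is required --- so $q_h*S_h$ is a homotopy equivalence, hence a weak equivalence, in $\cm$, for completely arbitrary $S$. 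In other words, instead of trying to make $-*S$ homotopical (which it is not when $S$ has non-cofibrant values), the paper makes the map $q$ robust enough (a genuine homotopy equivalence, available because the target weight is cofibrant in the terminal-object case, though not for $\Delta I$ itself) that any $\cv$-functor carries it to a homotopy equivalence. With that replacing your objectwise-cofibrant case, the rest of your argument is the paper's Part~II verbatim, and no cofibrant replacement $S^c$ of the diagram is needed at all.
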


\proof
I. First consider the case where \cj has a terminal object $t$. Then $\Delta I\colon\overline\cj\op\to\cv$ is the representable \cv-functor $\overline\cj(-,t)$, and so is cofibrant. Then $q\colon K\to\Delta I$ is a weak equivalence between fibrant and cofibrant objects, and so is a homotopy equivalence. It follows that $q*S$ is a weak equivalence.

II. Since $\lambda$-presentable objects in $\cv$ form a strong generator and $\cm$ has powers by all those objects, conical colimits in $\cm$ may be calculated in $\cm_0$. An arbitrary $\lambda$-filtered category $\cj$ may be written as the union $\cup_{h\in H} \cj_h$ of a $\lambda$-filtered set  of small subcategories $\cj_h$ with a terminal object: write $J_h\colon\cj_{h}\to\cj$ for the inclusions. Each $\cv$-functor 
$$
[\overline{J}_h\op,\cv]\colon [\overline{\cj}\op,\cv]\to[\overline{\cj}_h\op,\cv]
$$
has a left adjoint $\Phi_h$ given by left Kan extension.

Consider a diagram $S\colon\overline\cj\to\cm$ and let $S_h\colon \overline\cj_h\to\cm$ be its restriction for each $h\in H$. Then
$$
\Delta I = \colim_{h\in H}\Phi_h(\Delta I_h)
$$
and similarly $S$ is a colimit of the left Kan extensions of the $S_h$. 

Since $\cv$ is $\lambda$-combinatorial, all presheaf categories $[\overline{\cj}\op,\cv]$ and $[\overline{\cj}_h\op,\cv]$ are $\lambda$-combinatorial too, and so cofibrant replacement functors on them preserve $\lambda$-filtered colimits: see Remark~\ref{rmk:combinatorial}. 
Thus we have $K=\colim_h K_h$ where $K_h$ is the cofibrant replacement of $(\Delta I)\overline{J}_h^{\op}$.

The canonical maps $q_h*S_h\colon K_h*S_h\to \Delta I_h*S_h$ are weak equivalences by part~I of the proof. Since \cm  is $\lambda$-combinatorial, we know that $\lambda$-filtered colimits of weak equivalences are weak equivalences \cite{ D,R}; thus the induced map $\colim_{h\in H} K_h*S_h\to \Delta I_h*S_h$ is a weak equivalence; but this map is just $q*S\colon K*S\to\Delta I*S$. 
\endproof

  \begin{theo}\label{thm:hocolim-colim}
Let $\cv$ be a $\lambda$-combinatorial monoidal model category satisfying the monoid axiom, $\cm$ a $\lambda$-com\-bi\-na\-to\-rial model $\cv$-category, and \cj a $\lambda$-filtered ordinary category.
If $S\colon\overline\cj\to\cm$ lands in $\Int\cm$ then the canonical comparison  $k\colon \hocolim S\to \colim S$ is a weak equivalence. 
  \end{theo}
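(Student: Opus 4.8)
The plan is to deduce the theorem from Proposition~\ref{prop:hty-filtered}, which carries all of the homotopical weight; the only remaining task is to identify the canonical comparison $k$ with the map $q*S$ treated there. First I would recall, from the construction in Theorem~\ref{thm:colimit-existence} specialized to the weight $K$, that since $K$ is cofibrant and each $SD$ lies in $\Int\cm$ (hence is cofibrant), the weighted colimit $K*S$ formed in $\cm$ is cofibrant, and the homotopy colimit $\hocolim S=K*_h S$ is a fibrant replacement $r\colon K*S\to R(K*S)=\hocolim S$, with $r$ a trivial cofibration. The strict colimit is $\colim S=\Delta I*S$, and $q\colon K\to\Delta I$ induces $q*S\colon K*S\to\colim S$ by functoriality of weighted colimits, characterized by $\cm(S,q*S)\circ c=\mu\circ q$, where $c\colon K\to\cm(S,K*S)$ and $\mu\colon\Delta I\to\cm(S,\colim S)$ are the respective colimit cocones.

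Next I would pin down the comparison. The canonical $k\colon\hocolim S\to\colim S$ is the morphism determined, up to homotopy, by the requirement that it send the homotopy-colimit cocone $\delta=\cm(S,r)\circ c$ to the restriction $\mu\circ q$ of the colimit cocone; since $\cm(S,k)\circ\delta=\cm(S,k\circ r)\circ c$, this is exactly the condition that $k\circ r\simeq q*S$. Granting this relation, the conclusion is immediate: by Proposition~\ref{prop:hty-filtered} the map $q*S$ is a weak equivalence, and $r$ is a trivial cofibration and hence a weak equivalence, so because weak equivalences are invariant under homotopy and satisfy two-out-of-three, $k$ is a weak equivalence.

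The main obstacle is the middle step: producing $k$ as a genuine morphism of $\cm$ and verifying $k\circ r\simeq q*S$. The delicacy is that $\colim S$ need not be fibrant, so $q*S$ cannot be lifted along the trivial cofibration $r$ on the nose. The clean way around this is to argue in the homotopy category $\Ho\cm$: there $r$ is invertible, and the canonical comparison is by construction the image of $(q*S)\circ r^{-1}$, so that $k$ is automatically an isomorphism in $\Ho\cm$, that is, a weak equivalence. Alternatively one produces $k$ from the defining weak equivalence $\beta\colon\cm(\hocolim S,-)\to[\overline{\cj}\op,\cv](K,\cm(S,-))$ of the homotopy colimit, evaluated at a fibrant replacement of $\colim S$, invoking Proposition~\ref{prop:two-notions} to match the enriched and the unenriched homotopy relations. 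Either way, once $k$ is seen to coincide with $q*S$ up to the equivalence $r$, nothing further is required, all the substantive work on $\lambda$-filtered colimits having already been carried out in Proposition~\ref{prop:hty-filtered}.
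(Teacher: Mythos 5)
Your overall strategy---reduce everything to Proposition~\ref{prop:hty-filtered}, identify the comparison $k$ with a factorization of $q*S$ through the trivial cofibration $r\colon K*S\to R(K*S)$, and finish by two-out-of-three---is exactly the paper's, and your first two paragraphs are essentially the paper's proof. But your third paragraph contains a genuine error, and it occurs at the only point where the hypotheses of the theorem still have work to do. You assert that ``$\colim S$ need not be fibrant, so $q*S$ cannot be lifted along the trivial cofibration $r$ on the nose.'' In fact $\colim S=\Delta I*S$ \emph{is} fibrant here: $S$ has fibrant values, \cj is $\lambda$-filtered, conical colimits in \cm are computed in $\cm_0$ (as noted in the proof of Proposition~\ref{prop:hty-filtered}), and in a $\lambda$-combinatorial model category the fibrant objects are closed under $\lambda$-filtered colimits---the third bullet of Remark~\ref{rmk:combinatorial}. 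This single observation is the paper's entire proof beyond Proposition~\ref{prop:hty-filtered}: since $\colim S$ is fibrant and $r$ is a trivial cofibration, $q*S$ extends along $r$ on the nose, giving an honest morphism $k$ with $k\circ r=q*S$, and two-out-of-three concludes. It is precisely here that the $\lambda$-combinatoriality of \cm, the $\lambda$-filteredness of \cj, and the hypothesis that $S$ lands in $\Int\cm$ are used; your proposal never invokes this closure property at all.

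Your proposed workarounds do not repair the gap, because the issue is not how to see that $k$ is a weak equivalence but how to produce $k$ as a morphism of \cm in the first place. Arguing in $\Ho\cm$ presupposes that the canonical comparison is a morphism of \cm whose image under the localization is $P(q*S)P(r)^{-1}$; but if $\colim S$ were really not fibrant, there would be no reason for this morphism of $\Ho\cm$---whose source $R(K*S)$ is cofibrant but whose target is not known to be fibrant---to be represented by any morphism of \cm, and exhibiting such a representative is exactly the lifting problem you set aside. (Had you constructed such a $k$, the rest of your $\Ho\cm$ argument would go through, using saturation of model categories, a fact the paper never needs since its lift is on the nose.) Similarly, evaluating the structure map $\beta$ of the homotopy colimit at a fibrant replacement of $\colim S$ produces a morphism $\hocolim S\to R(\colim S)$, not a morphism into $\colim S$, so it again fails to yield the comparison $k$ that the theorem asserts to be a weak equivalence. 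The repair is simply to notice the fibrancy of $\colim S$ via Remark~\ref{rmk:combinatorial}, after which your own two-out-of-three argument is complete.
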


  \begin{proof}
Since $S$ has fibrant values, $\colim S$ is fibrant by Remark~\ref{rmk:combinatorial}; thus since 
$r\colon K*S\to R(K*S)$ is a trivial cofibration, the 
map $q*S\colon K*S\to \Delta I*S$ extends along $r$ to give a map $k\colon R(K*S)\to \Delta I*S$. Now $q*S$ is a weak equivalence by Proposition~\ref{prop:hty-filtered}, and so $k$ too is a weak equivalence. 
  \end{proof}
 
\begin{rem}\label{rmk:flexible}
For $\cv=\Cat$, cofibrant weights are precisely flexible weights: see \cite{La}. Since colimits weighted by flexible weights are bicolimits
(see \cite{BKPS} and \cite{La1}), a consequence of Theorem~\ref{thm:hocolim-colim} is that filtered colimits in $\Cat$ are bicolimits: see \cite[5.4.9]{MP}.
We are indebted to J. Bourke for this observation (see \cite{Bo} as well).

This relies on the fact that in $\Cat$ every weak equivalence is an equivalence. For a general combinatorial model 2-category this need not be the case, and so filtered colimits need not be bicolimits; indeed it need not even be the case in a presheaf 2-category.
\end{rem}

We also have the following more general result. 

\begin{propo}
Let $\cv$ be a $\lambda$-combinatorial monoidal model category satisfying the monoid axiom, $\cm$ a $\lambda$-com\-bi\-na\-to\-rial model $\cv$-category, and \cj a $\lambda$-filtered ordinary category.
If $S\colon\overline\cj\to\cm$ has cofibrant values, then $\hocolim RS$ is weakly equivalent to $\colim S$; if $S$ lands in $\Int\cm$, then $\hocolim S$ is weakly equivalent to $\colim S$, in the sense that they are isomorphic in the homotopy category. 
\end{propo}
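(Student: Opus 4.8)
The plan is to dispatch the second clause first, since it is almost immediate from Theorem~\ref{thm:hocolim-colim}, and then to reduce the first clause to the second by replacing $S$ by its objectwise fibrant replacement. For the second clause, if $S$ lands in $\Int\cm$ then Theorem~\ref{thm:hocolim-colim} already provides a canonical comparison $k\colon\hocolim S\to\colim S$ which is a weak equivalence; in particular $\hocolim S$ and $\colim S$ become isomorphic in $\Ho\cm$, which is exactly the assertion.

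For the first clause I only assume that $S$ has cofibrant values. First I would check that the objectwise fibrant replacement $RS\colon\overline\cj\to\cm$ lands in $\Int\cm$: each value $RS(j)$ is fibrant by construction, and it is also cofibrant because $\emptyset\to S(j)\to RS(j)$ is a composite of a cofibration (as $S(j)$ is cofibrant) with a trivial cofibration. Hence the already-established second clause applies to $RS$ and yields a weak equivalence relating $\hocolim RS$ and $\colim RS$.

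It then remains only to compare $\colim RS$ with $\colim S$. The objectwise trivial cofibration $\psi\colon S\to RS$ induces a map $\colim\psi\colon\colim S\to\colim RS$, and since $\cj$ is $\lambda$-filtered this is a $\lambda$-filtered colimit of the weak equivalences $\psi_j$; here I use that the conical colimit $\Delta I*(-)$ agrees with the ordinary colimit of the underlying $\cj$-diagram computed in $\cm_0$. By the closure of weak equivalences under $\lambda$-filtered colimits (Remark~\ref{rmk:combinatorial}), $\colim\psi$ is therefore a weak equivalence. Stringing the two steps together produces a zig-zag
$$\hocolim RS \;\sim\; \colim RS \;\xleftarrow{\ \sim\ }\; \colim S$$
of weak equivalences, exhibiting $\hocolim RS$ and $\colim S$ as isomorphic in $\Ho\cm$.

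The only step carrying genuine content is this last one, and it is where the $\lambda$-filtered hypothesis does work beyond what Theorem~\ref{thm:hocolim-colim} already supplies: one must know both that conical ($\lambda$-filtered) colimits are computed in the underlying ordinary category and that weak equivalences there are stable under such colimits. Everything else is bookkeeping with the definitions of $\hocolim$ and of $\colim=\Delta I*(-)$, and with the observation that fibrant replacement of a cofibrant object stays within $\Int\cm$.
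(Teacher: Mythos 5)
Your proposal is correct, but it takes a genuinely different route from the paper's own proof. You reduce the first clause to the second: since $RS$ lands in $\Int\cm$, Theorem~\ref{thm:hocolim-colim} gives a weak equivalence $\hocolim RS\to\colim RS$, and you then compare $\colim RS$ with $\colim S$ by viewing $\colim\psi\colon\colim S\to\colim RS$ as a $\lambda$-filtered colimit of weak equivalences in $\cm_0$ (legitimate: the paper itself notes, in part~II of the proof of Proposition~\ref{prop:hty-filtered}, that conical colimits in $\cm$ are computed in $\cm_0$, and Remark~\ref{rmk:combinatorial} gives the closure of weak equivalences under $\lambda$-filtered colimits). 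The paper instead works throughout at the level of the cofibrant weight $K$: it constructs $\hocolim RS$ as $R(K*RS)$ via Theorem~\ref{thm:colimit-existence}, obtains $\colim S\xleftarrow{\ \sim\ }K*S\xrightarrow{\ \sim\ }R(K*S)$ from Proposition~\ref{prop:hty-filtered}, and then shows that $R(K*r)\colon R(K*S)\to R(K*RS)$ is a weak equivalence by an enriched representability argument: by Proposition~\ref{prop:hty-equiv} it suffices that $\cm(K*r,M)$ be a weak equivalence for every $M\in\Int\cm$, and by the universal property of the weighted colimits this map is $[\overline\cj\op,\cv](K,\cm(r,M))$, which is a weak equivalence because $K$ is cofibrant and $\cm(r,M)$ is a weak equivalence between fibrant objects. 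Your argument is shorter and leans on the combinatoriality machinery (filtered colimits of weak equivalences) a second time, beyond its hidden use inside Proposition~\ref{prop:hty-filtered}; the paper's argument confines all use of $\lambda$-filteredness to Proposition~\ref{prop:hty-filtered} and otherwise uses only the enriched model-category axioms, at the price of the explicit comparison map $R(K*r)$ and the hom-wise lifting argument.
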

\begin{proof}
Write $\Delta I\colon\overline{\cj}\op\to\cv$ for the $\cv$-functor which is constant at the identity, and $q\colon K\to\Delta I$ for its cofibrant replacement. 
Then $q*S\colon \hbox{$K*S$}\to(\Delta I)*S$ is a weak equivalence by Proposition~\ref{prop:hty-filtered}, and of course $r\colon K*S\to R(K*S)$ is a weak equivalence. Since, by Theorem~\ref{thm:colimit-existence}, we may construct $\hocolim RS$ as $R(K*RS)$, it will suffice to show that $R(K*r)\colon R(K*S)\to R(K*RS)$ is a weak equivalence. But this will certainly be the case if $R(K*r)$ is a homotopy equivalence, and so it will suffice to show that $\cm(R(K*r),M)\colon\cm(R(K*RS),M)\to\cm(R(K*S),M)$ is a weak equivalence for all $M\in\Int\cm$, and finally this will be true if and only if $\cm(K*r,M)\colon\cm(K*RS,M)\to\cm(K*S,M)$ is a weak equivalence for all $M\in\Int\cm$. By the universal property of the colimits $K*S$ and $K*RS$, this is equivalent to 
$$\xymatrix @C4pc {
[\overline\cj\op,\cv](K,\cm(RS,M)) \ar[r]^{\cm(r,M)_*} & 
[\overline\cj\op,\cv](K,\cm(S,M)) }$$
being a weak equivalence. Now $r\colon S\to RS$ is a trivial cofibration and $M$ fibrant, so $\cm(r,M)$ is a weak equivalence between fibrant objects; thus the displayed map is a weak equivalence because $K$ is cofibrant.
\end{proof}

\section{Preservation of homotopy colimits}\label{sect:pres}

\begin{defi}\label{defn:preservation}  
Let $F\colon\ck\to\cl$ be a $\cv$-functor between fibrant $\cv$-categories, $G\colon\cd\op\to\cv$ a cofibrant weight, and $S\colon\cd\to\ck$ a diagram. 
We say that $F$ {\em preserves} the homotopy weighted colimit $G\ast_h S$ when the composite
$$
G \xrightarrow{\quad  \delta\quad} \ck(S,G\ast_h S) \xrightarrow{\quad F\quad} \cl(FS,F(G\ast_h S)).
$$
exhibits $F(G\ast_hS)$ as the homotopy colimit $G\ast_h FS$.
\end{defi}

\begin{rem}\label{rmk:preservation}
(1) Provided that the homotopy colimit $G*_h FS$ exists, the composite above induces a morphism
$$
l\colon G\ast_h FS\to F(G\ast_h S)
$$
and $F$ preserves $G\ast_hS$ if and only if this morphism is a homotopy equivalence.

(2) In particular, given a diagram $S\colon\overline{\cj}\to\ck$ for an ordinary category $\cj$, we say that $F$ preserves the homotopy colimit of $S$ when
$$
\Delta I \xrightarrow{\quad  \delta\quad} \ck(S,\hocolim S) \xrightarrow{\quad F \quad} \cl(FS,F\hocolim S).
$$
exhibits $F\hocolim S$ as a homotopy colimit of $FS$.
\end{rem}

We now want to show that representable functors preserve homotopy limits, as well as considering the extent to which they preserve homotopy colimits. Usually, a representable functor has codomain \cv, but we are only considering homotopy limits or colimits in fibrant \cv-categories, and \cv need not be fibrant. If \ck is a fibrant \cv-category, then the representable $\ck(K,-)$ will take values in the full subcategory of fibrant objects in \cv, but in general this subcategory need not be fibrant. If, however, we suppose that all objects of \cv are cofibrant, then the full subcategory of fibrant objects is $\Int\cv$, and this finally is a fibrant \cv-category. 

\begin{lemma}\label{lemma:reps-cts}
Let $\cv$ have all objects cofibrant and let $\ck$ be a fibrant $\cv$-category. Then $\ck(A,-)\colon\ck\to\Int\cv$ preserves weighted homotopy limits 
for each object $A$ in $\ck$.
\end{lemma}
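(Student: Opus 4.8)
The plan is to unwind the definition of preservation for homotopy \emph{limits} (the dual of Definition~\ref{defn:preservation}) and to exploit the fact that, because every object of $\cv$ is cofibrant, the representable $\ck(A,-)$ lands in the fibrant $\cv$-category $\Int\cv$, and that homotopy limits in $\Int\cv$ are computed as genuine weighted limits. First I would fix a cofibrant weight $G\colon\cd\to\cv$ and a diagram $S\colon\cd\to\ck$ admitting a homotopy limit $\{G,S\}_h$, with defining pointwise weak equivalence $\beta\colon\ck(-,\{G,S\}_h)\to[\cd,\cv](G,\ck(-,S))$ and corresponding cone $\delta\colon G\to\ck(\{G,S\}_h,S)$. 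Writing $F=\ck(A,-)$, the diagram $FS=\ck(A,S)\colon\cd\to\Int\cv$ sends $D$ to the fibrant, hence (all objects being cofibrant) fibrant-and-cofibrant, object $\ck(A,SD)$; by Theorem~\ref{thm:colimit-existence} the homotopy limit $\{G,FS\}_h$ exists in $\Int\cv$.

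Next I would identify this homotopy limit explicitly. The genuine weighted limit $\{G,FS\}$ of a $\cv$-valued diagram is the hom-object $[\cd,\cv](G,\ck(A,S))$ of the presheaf category; it is fibrant because $[\cd,\cv]$ is a model $\cv$-category, $G$ is cofibrant and each $\ck(A,SD)$ is fibrant, and it is cofibrant since every object of $\cv$ is. Thus $\{G,FS\}\in\Int\cv$, and by the dual of Proposition~\ref{prop:hcolimit-uniqueness}(1) it serves as the homotopy limit $\{G,FS\}_h$. In particular the comparison map of Remark~\ref{rmk:preservation}(1), dualized to limits, is a morphism $\ell\colon\ck(A,\{G,S\}_h)\to[\cd,\cv](G,\ck(A,S))$, and $F$ preserves $\{G,S\}_h$ precisely when $\ell$ is a homotopy equivalence.

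The key observation is then that $\ell$ is nothing but the component $\beta_A$ of the defining weak equivalence. Indeed, the target $[\cd,\cv](G,\ck(A,S))$ of $\beta_A$ is exactly the genuine limit $\{G,FS\}$, and under the enriched Yoneda correspondence between $\beta$ and $\delta$ the map $\beta_A$ is precisely the comparison induced by the cone obtained from $\delta$ by applying $F$. Since $\beta$ is a pointwise weak equivalence, $\beta_A=\ell$ is a weak equivalence between the objects $\ck(A,\{G,S\}_h)$ and $[\cd,\cv](G,\ck(A,S))$, both of which are fibrant and cofibrant; by Remark~\ref{rmk:ho-Ho} a weak equivalence between fibrant-and-cofibrant objects is a homotopy equivalence. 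Hence $\ell$ is a homotopy equivalence and $F=\ck(A,-)$ preserves $\{G,S\}_h$. As $G$, $S$ and $A$ were arbitrary, the representable preserves all weighted homotopy limits.

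I expect the only real obstacle to be the identification $\ell=\beta_A$ in the third paragraph: one must check carefully, via the enriched Yoneda lemma relating $\beta$ and $\delta$ together with the universal property of the weighted limit $[\cd,\cv](G,\ck(A,S))$, that transporting the homotopy-limit cone through $\ck(A,-)$ and comparing against the universal cone of the genuine limit reproduces exactly the $A$-component of $\beta$. Everything else is bookkeeping with results already at hand.
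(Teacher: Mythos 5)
Your proof is correct, but it is organized differently from the paper's. The paper works directly from Definition~\ref{defn:preservation} (dualized): it forms the composite cone $\delta'=\ck(A,-)\circ\delta$, passes to the corresponding natural transformation $\beta'\colon\cv(-,\ck(A,L))\to[\cd,\cv](G,\cv(-,\ck(A,S)))$, and shows each component $\beta'_X$ is a weak equivalence by identifying it with the composite of $\cv(X,\beta_A)$ and the canonical isomorphism $\cv(X,[\cd,\cv](G,\ck(A,S)))\cong[\cd,\cv](G,\cv(X,\ck(A,S)))$; here ``all objects cofibrant'' enters because $X$ ranges over \emph{all} of $\cv$, and $\cv(X,-)$ preserves the weak equivalence $\beta_A$ between fibrant objects precisely because $X$ is cofibrant. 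You instead collapse the pointwise verification to a single map: you observe that the genuine limit $\{G,FS\}=[\cd,\cv](G,\ck(A,S))$ is fibrant and (by the standing hypothesis) cofibrant, hence lies in $\Int\cv$ and serves as the homotopy limit by the dual of Proposition~\ref{prop:hcolimit-uniqueness}(1); you then invoke the comparison-map criterion of Remark~\ref{rmk:preservation}(1), identify the comparison $\ell$ with $\beta_A$ by the Yoneda/transposition argument you flag, and upgrade the weak equivalence $\beta_A$ between fibrant-and-cofibrant objects to a homotopy equivalence. Both proofs ultimately rest on the same transposition identifying $\beta_A$ with the relevant comparison datum, and both use cofibrancy of all objects crucially, just in different places. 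Your route is arguably more conceptual (one map, one upgrade), but it leans on Remark~\ref{rmk:preservation}(1), which the paper states without proof (it needs Propositions~\ref{prop:hty-equiv} and~\ref{prop:hcolimit-uniqueness}(4) to justify, including the subtlety that the strictly induced $\ell$ and the up-to-homotopy induced one agree up to homotopy), whereas the paper's argument is self-contained from the definition and in fact establishes the defining weak equivalences at every object of $\cv$, not just of $\Int\cv$.
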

\begin{proof}
Consider a cofibrant weight $G\colon\cd\to\cv$, a diagram $S\colon\cd\to\ck$, a 
natural transformation 
$$
\delta\colon G\to\ck(L,S)
$$
exhibiting $L$ as the homotopy limit $\{G,S\}_h$, as well as the corresponding natural transformation
$$
\beta\colon \ck(-,L)\to[\cd,\cv](G,\ck(-,S))
$$
whose components are weak equivalences. Form the composite $\delta'$ given by 
$$\xymatrix @C3pc {
G \ar[r]^-{\delta} & \ck(L,D) \ar[r]^-{\ck(A,-)} & \cv(\ck(A,L),\ck(A,S)) }$$
and  the corresponding natural transformation
$$
\beta'\colon\cv(-,\ck(A,L))\to[\cd,\cv](G,\cv(-,\ck(A,S))) .
$$
We have to show that $\beta'$ is a pointwise weak equivalence. The components $\beta_A\colon\ck(A,L)\to[\cd,\cv](G,\ck(A,S))$ of $\beta$ are weak equivalences between fibrant objects, and so 
$$\cv(X,\beta_K)\colon \cv(X,\ck(A,L))\to \cv(X,[\cd,\cv](G,\ck(A,S)))$$
is also a weak equivalence for each $X\in\cv$. But the composite of the weak equivalence $\cv(X,\beta_K)$ with the canonical isomorphism 
$$\cv(X,[\cd,\cv](G,\ck(A,S)))\cong[\cd,\cv](G,\cv(X,\ck(A,S)))$$ 
is the $X$-component $\beta'_X$ of $\beta'$. Thus $\beta'$ is indeed a pointwise weak equivalence.
\end{proof}

\begin{defi}\label{defn:presentable}
Let \cv have all objects cofibrant and let \ck be a fibrant \cv-category. An object $A$ of \ck is said to be  {\em homotopy} $\lambda$-{\em presentable} 
when $\ck(A,-)\colon\ck\to\Int\cv$ preserves homotopy $\lambda$-filtered colimits.
\end{defi}

\begin{lemma}\label{pres1}
Let $F,G\colon \ck\to\cl$ be \cv-functors between fibrant \cv-ca\-te\-go\-ries, and let $\phi\colon F\to G$ be a pointwise homotopy equivalence. Then $F$ preserves a homotopy colimit in \ck if and only if $G$ does so.
\end{lemma}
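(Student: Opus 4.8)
The plan is to work directly with the defining cocones rather than with the comparison morphisms of Remark~\ref{rmk:preservation}, so as to avoid any a priori existence assumptions on the homotopy colimits of $FS$ and $GS$. To fix notation (and to avoid the clash with the functor $G$), let $W\colon\cd\op\to\cv$ be a cofibrant weight, $S\colon\cd\to\ck$ a diagram, and $\delta\colon W\to\ck(S,C)$ a cocone exhibiting $C$ as a homotopy colimit $W*_h S$ in $\ck$; I will show that $F$ preserves this homotopy colimit precisely when $G$ does. Applying $F$ and $G$ to $\delta$ gives cocones $F\delta\colon W\to\cl(FS,FC)$ and $G\delta\colon W\to\cl(GS,GC)$, and by Definition~\ref{defn:preservation} together with the Yoneda description in Definition~\ref{defn:hty-colimit}, $F$ preserves $W*_h S$ iff the transpose $\beta^F\colon\cl(FC,-)\to[\cd\op,\cv](W,\cl(FS,-))$ of $F\delta$ has components that are weak equivalences, and similarly for $G$.

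Next I would exploit the enriched naturality of $\phi$. The naturality square of $\phi$ at the pair $(SD,C)$, composed with $\delta_D$, yields for every $D\in\cd$ the identity $(\phi_C)_*\circ(F\delta)_D=(\phi_{SD})^*\circ(G\delta)_D$; assembling these over $\cd$ gives a single equality $(\phi_C)_*\circ F\delta=(\phi S)^*\circ G\delta$ of morphisms $W\to\cl(FS,GC)$ in $[\cd\op,\cv]$. Transposing this identity produces a commuting square whose two vertical legs are $\beta^F$ and $\beta^G$ and whose horizontal legs are $\cl(\phi_C,-)\colon\cl(GC,-)\to\cl(FC,-)$ and $[\cd\op,\cv](W,\cl(\phi S,-))\colon[\cd\op,\cv](W,\cl(GS,-))\to[\cd\op,\cv](W,\cl(FS,-))$.

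The heart of the argument is then to check that both horizontal legs are pointwise weak equivalences. For the first, $\phi_C$ is a homotopy equivalence in the fibrant category $\cl$, so $\cl(\phi_C,X)$ is a weak equivalence for every $X$ by Proposition~\ref{prop:hty-equiv}. For the second, each $\phi_{SD}$ is a homotopy equivalence, so $\cl(\phi_{SD},X)$ is a weak equivalence (again Proposition~\ref{prop:hty-equiv}); hence $\cl(\phi S,X)$ is a weak equivalence between fibrant objects of $[\cd\op,\cv]$, and since $W$ is cofibrant the functor $[\cd\op,\cv](W,-)$ carries it to a weak equivalence in $\cv$ --- this is the same $\cv$-model-category fact already used in the proof of Proposition~\ref{prop:hcolimit-uniqueness}. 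With both horizontal legs componentwise weak equivalences, a single application of two-out-of-three to each component of the commuting square shows that $\beta^F$ is a pointwise weak equivalence iff $\beta^G$ is, which is exactly the claim.

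The only genuinely delicate point, and the one I would be most careful about, is resisting the temptation to invert $\phi$: it is merely a pointwise homotopy equivalence of $\cv$-functors, with no natural inverse available, so the equivalence of the two preservation conditions must be extracted from the naturality square by two-out-of-three rather than by transporting cocones along an inverse. Everything else is bookkeeping: keeping the weight $W$ notationally distinct from $G$, and confirming that the presheaves $\cl(FS,X)$ and $\cl(GS,X)$ are fibrant (because $\cl$ is a fibrant $\cv$-category) so that the cited fibrancy hypotheses of Proposition~\ref{prop:hty-equiv} and of the $\cv$-model-category fact actually apply.
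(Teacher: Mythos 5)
Your proof is correct and takes essentially the same route as the paper's: the paper forms the identical commutative square relating the transposed cocones $\beta^F$ and $\beta^G$ (displaying the vertical legs as three-fold composites so that commutativity follows visibly from the naturality of $\phi$, rather than by transposing the untransposed identity as you do), verifies that the top and bottom horizontals are weak equivalences by exactly the two arguments you give, and concludes by two-out-of-three.
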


\proof
Let $H\colon\cd\op\to\cv$ be a cofibrant weight, and $S\colon\cd\to\ck$ a diagram in \ck, and let 
$$\xymatrix{
H \ar[r]^-{\eta} & \ck(S,H*_h S) }$$
exhibit $H*_h S$ as the homotopy colimit in \ck. 

In the commutative diagram 
$$\xymatrix{
\cl(G(H*_hS),L) \ar[r]^{\cl(\phi,1)} \ar[d]_{\cl(GS,-)} & \cl(F(H*_hS),L) \ar[d]^{\cl(FS,-)} \\
\dhom(\cl(GS,G(H*_hS)),\cl(GS,L)) \ar[d]_{\dhom(G,1)} & \dhom(\cl(FS,F(H*_hS)),\cl(FS,L)) \ar[d]^{\dhom(F,1)} \\
\dhom(\cl(S,H*_HS),\cl(GS,L)) \ar[d]_{\dhom(\eta,1)} \ar[r]_{\dhom(1,\cl(\phi,1))} & \dhom(\cl(S,H*_hS),\cl(FS,L)) \ar[d]^{\dhom(\eta,1)} \\
\dhom(H,\cl(GS,L)) \ar[r]_{\dhom(1,\cl(\phi,1))} & \dhom(H,\cl(FS,L)) 
}$$
where we write $\dhom$ for $[\cd\op,\cv]$, 
the homotopy colimit $H*_hS$ is preserved by $F$ if and only if the right vertical composite is a weak equivalence (in \cv), and is preserved by $G$ if and ony if the left vertical composite is a weak equivalence. But the top and bottom horizontal maps are both weak equivalences, since $\phi$ is a pointwise homotopy equivalence, so these conditions are equivalent. 
\endproof

\begin{lemma}\label{pres2}
Let $F\colon \ck\to\cl$ and $G\colon\cl\to\cm$ be \cv-functors between fibrant \cv-categories. If $F$ preserves a homotopy colimit $H*_hS$,  then $G$ preserves $H*_hFS$ if and only if $GF$ preserves $H*_hS$.
\end{lemma}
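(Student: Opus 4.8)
The plan is to unwind Definition~\ref{defn:preservation} on both sides and to observe that the two preservation conditions refer to one and the same morphism, so that the claimed equivalence is immediate --- indeed the two conditions turn out to be identical.

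First I would fix a structure map $\delta\colon H\to\ck(S,H*_h S)$ exhibiting $H*_h S$ as the homotopy colimit in \ck. Since $F$ preserves $H*_h S$, Definition~\ref{defn:preservation} says precisely that the composite
$$\delta_F\colon\; H\xrightarrow{\;\delta\;}\ck(S,H*_h S)\xrightarrow{\;F\;}\cl(FS,F(H*_h S))$$
exhibits $F(H*_h S)$ as a homotopy colimit $H*_h FS$. It is exactly this homotopy colimit --- namely the object $F(H*_h S)$ equipped with the structure map $\delta_F$ --- to which the phrase ``$G$ preserves $H*_h FS$'' refers.

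Next I would spell out the two conditions. On the one hand, ``$G$ preserves $H*_h FS$'' asserts that
$$H\xrightarrow{\;\delta_F\;}\cl(FS,F(H*_h S))\xrightarrow{\;G\;}\cm(GFS,GF(H*_h S))$$
exhibits $GF(H*_h S)$ as the homotopy colimit $H*_h GFS$. On the other hand, ``$GF$ preserves $H*_h S$'' asserts that
$$H\xrightarrow{\;\delta\;}\ck(S,H*_h S)\xrightarrow{\;GF\;}\cm(GFS,GF(H*_h S))$$
exhibits $GF(H*_h S)$ as $H*_h GFS$. The point I would then record is that the action of the composite \cv-functor $GF$ on hom-objects is, by definition of composition of \cv-functors, the composite of the actions of $F$ and $G$: the arrow labelled $GF$ is the map of presheaves whose component at $D\in\cd$ is $G_{FSD,F(H*_h S)}\circ F_{SD,H*_h S}$, and so equals the arrow labelled $G$ precomposed with the arrow labelled $F$. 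Together with the identity $\delta_F=F\circ\delta$, this shows that the two displayed composites are literally the same morphism $H\to\cm(GFS,GF(H*_h S))$ in $[\cd\op,\cv]$.

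Since both conditions assert that this one morphism exhibits $GF(H*_h S)$ as the homotopy colimit $H*_h GFS$, they are not merely equivalent but identical, which proves the lemma. I expect essentially no obstacle here: the whole content is the bookkeeping of structure maps and the strict functoriality of the hom-action of a composite \cv-functor. The only mild subtlety is the choice of $F(H*_h S)$ as the representative of $H*_h FS$, which is in any case the one furnished by the hypothesis that $F$ preserves $H*_h S$.
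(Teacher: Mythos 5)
Your proposal is correct and is exactly the paper's argument: the paper's own proof is the one-line observation that, given $F$ preserves $H*_hS$, for $GF$ to preserve $H*_hS$ is literally the same thing as for $G$ to preserve $F(H*_hS)$. You have simply spelled out the bookkeeping (the identity $\delta_F=F\circ\delta$ and the fact that the hom-action of $GF$ is the composite of the hom-actions) that makes this ``literally the same'' precise.
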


\proof
Given that $F$ preserves $H*_hS$, then for $GF$ to preserve $H*_hS$ is literally the same thing as for $G$ to preserve $F(H*_hS)$. 
\endproof

\begin{lemma}\label{lemma:reflect}
  Let $(F_\kappa\colon\ck\to\cl_\kappa)_{\kappa\in K}$ be a family of \cv-functors 
between fibrant \cv-categories, and suppose that they jointly reflect homotopy equivalences. Then the $F_\kappa$ jointly reflect any type of homotopy colimit which they preserve. 
\end{lemma}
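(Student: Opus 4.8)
The plan is to reduce the reflection of a homotopy colimit to the reflection of a single homotopy equivalence, which is exactly what the hypothesis supplies. Fix a cofibrant weight $G\colon\cd\op\to\cv$ and a diagram $S\colon\cd\to\ck$ for which the homotopy colimit exists and is preserved by each $F_\kappa$; write $\delta_0\colon G\to\ck(S,C_0)$ for a cocone exhibiting $C_0$ as $G*_h S$. Suppose $\delta\colon G\to\ck(S,C)$ is a cocone whose image under each $F_\kappa$ exhibits $F_\kappa C$ as $G*_h F_\kappa S$; I must show that $\delta$ itself exhibits $C$ as $G*_h S$. Since $C_0$ is a homotopy colimit, the component $\beta_{0,C}\colon\ck(C_0,C)\to[\cd\op,\cv](G,\ck(S,C))$ of its defining natural weak equivalence is a weak equivalence between fibrant objects (the codomain is fibrant because $G$ is cofibrant and $\ck(S,C)$ is pointwise fibrant). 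As $\ck$ is fibrant and $I$ is cofibrant, Remark~\ref{rmk:hty-equiv-symmetry} lets me lift $\delta$ through $\beta_{0,C}$ up to homotopy: there is a morphism $c\colon C_0\to C$ in $\ck_0$ with $c_*\delta_0=\beta_{0,C}(c)\simeq\delta$.

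The heart of the argument is to show that each $F_\kappa c\colon F_\kappa C_0\to F_\kappa C$ is a homotopy equivalence. By $\cv$-functoriality of $F_\kappa$ we have $(F_\kappa c)_*(F_\kappa\delta_0)=F_\kappa(c_*\delta_0)$, and since $c_*\delta_0\simeq\delta$ and post-composition preserves homotopy, $(F_\kappa c)_*(F_\kappa\delta_0)\simeq F_\kappa\delta$. Now $F_\kappa C_0$ is exhibited as $G*_h F_\kappa S$ by $F_\kappa\delta_0$ (preservation) and $F_\kappa C$ is exhibited as $G*_h F_\kappa S$ by $F_\kappa\delta$ (hypothesis); feeding these into Definition~\ref{defn:hty-colimit} and using the displayed homotopy, I find for every $Z\in\cl_\kappa$ that the triangle comparing $\cl_\kappa(F_\kappa c,Z)$ with the two defining weak equivalences $\cl_\kappa(F_\kappa C,Z)\to[\cd\op,\cv](G,\cl_\kappa(F_\kappa S,Z))$ and $\cl_\kappa(F_\kappa C_0,Z)\to[\cd\op,\cv](G,\cl_\kappa(F_\kappa S,Z))$ commutes after applying $P\colon\cv\to\Ho\cv$ (here I use that $P$ identifies homotopic maps, so no cofibrancy of the hom-objects is needed). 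Two-out-of-three in $\Ho\cv$ then forces $\cl_\kappa(F_\kappa c,Z)$ to be a weak equivalence for all $Z$, and Proposition~\ref{prop:hty-equiv}, condition (iii), makes $F_\kappa c$ a homotopy equivalence.

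With each $F_\kappa c$ a homotopy equivalence, the hypothesis that the $F_\kappa$ jointly reflect homotopy equivalences yields that $c\colon C_0\to C$ is a homotopy equivalence in \ck. It remains to transport the homotopy colimit along $c$. Because $\ck$ is fibrant and $c$ is a homotopy equivalence, $\ck(c,X)$ is a weak equivalence for each $X$ by Proposition~\ref{prop:hty-equiv}, so the natural transformation attached to the cocone $c_*\delta_0$, namely $\beta_{0,-}\circ\ck(c,-)$, is a pointwise weak equivalence; hence $c_*\delta_0$ exhibits $C$ as $G*_h S$. Finally, $\delta\simeq c_*\delta_0$, so Proposition~\ref{prop:hcolimit-uniqueness}(4) shows that $\delta$ exhibits $C$ as $G*_h S$, as required; the limit case is dual. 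The main obstacle is the middle step: making precise, with only up-to-homotopy lifts available, that $F_\kappa c$ is the comparison of two homotopy colimits and hence a homotopy equivalence — this is where passing to $\Ho\cv$ to absorb the homotopies, rather than arguing with strict cocone equalities, is essential.
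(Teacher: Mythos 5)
Your proof is correct and follows essentially the same route as the paper's: construct the comparison morphism $c\colon G*_h S\to C$ up to homotopy from the universal property, show each $F_\kappa c$ is a homotopy equivalence (since it compares two presentations of the same homotopy colimit in $\cl_\kappa$), invoke joint reflection of homotopy equivalences, and transport the colimit along $c$. The only difference is that you spell out, via two-out-of-three in $\Ho\cv$ and Proposition~\ref{prop:hty-equiv}, the sub-claim the paper states without proof --- that a cocone exhibits a homotopy colimit if and only if the comparison morphism is a homotopy equivalence --- which is a welcome filling-in of detail rather than a change of method.
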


\proof
Let $S\colon\cd\to\ck$ be a diagram in \ck and $G\colon\cd\op\to\cv$ a cofibrant weight. Suppose that $\delta\colon G\to\ck(S,G*_h S)$ exhibits $G*_h S$ as the homotopy colimit, and that this is preserved by the $F_\kappa$. Now let $C$ be an object of \ck, and $\gamma\colon G\to \ck(S,C)$ a morphism in $[\cd\op,\cv]$. By the universal property of $G*_h S$, there is a morphism $w\colon G*_h S\to C$ in \ck making the diagram 
$$\xymatrix{
I \ar[d]_{\gamma} \ar[r]^{w} & \ck(G*_h S,C) \ar[d]^{\ck(S,-)} \\
[\cd\op,\cv](G,\ck(S,C)) & [\cd\op,\cv](\ck(S,G*_h S),\ck(S,C)) \ar[l]^-{\delta^*} }$$
commute up to homotopy. 

Now $\gamma$ exhibits $C$ as the homotopy colimit of $S$ weighted by $G$ if and only if $w\colon G*_h S\to C$  is a homotopy equivalence in $\ck$. If the $F_\kappa$ preserve the homotopy colimit $G*_h S$ and each 
$$\xymatrix{
G \ar[r]^-{\gamma} & \ck(S,C) \ar[r]^-{F_\kappa} & \cl_\kappa(F_\kappa S,F_\kappa C) }$$ 
exhibits $F_\kappa C$ as $G*_h F_\kappa S$ then each $F_\kappa w$ is a homotopy equivalence in $\cl_\kappa$. Thus if the $F_\kappa$ jointly reflect homotopy equivalences then $w$ is a homotopy equivalence, and so $\gamma$ exhibits $C$ as the homotopy colimit of $S$ weighted by $G$.
\endproof

\begin{lemma}\label{lemma:qe}
Let \cv be a $\lambda$-combinatorial monoidal model category, let \ca be a small \cv-category, and let \ck be a fibrant \cv-category with $\lambda$-filtered homotopy colimits. Let $F\colon\ca\to\ck$ be a \cv-functor whose values are homotopy $\lambda$-presentable, and let $E\colon\ck\to[\ca\op,\cv]$ be the \cv-functor sending $X\in\ck$ to $\ck(F-,X)$. Then the composite $Q\circ E\colon\ck\to\Int[\ca\op,\cv]$ preserves $\lambda$-filtered homotopy colimits. 
\end{lemma}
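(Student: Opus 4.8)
The plan is to reduce the statement about $Q\circ E$ to a pointwise statement using the tools already developed, particularly Lemma~\ref{lemma:reflect} and the fact that the evaluation functors at objects of $\ca$ jointly reflect homotopy equivalences in $[\ca\op,\cv]$. First I would observe that the representables $[\ca\op,\cv](\ca(-,a),-)$, for $a\in\ca$, are given by evaluation $\ev_a$ at $a$, by the enriched Yoneda lemma; and since the objects $\ca(-,a)$ are cofibrant in $[\ca\op,\cv]$ (each hom-object of \ca is cofibrant because \cv has all objects cofibrant), these evaluation functors are representable functors on the fibrant \cv-category $\Int[\ca\op,\cv]$. By Lemma~\ref{lemma:reps-cts} each such $\ev_a\colon\Int[\ca\op,\cv]\to\Int\cv$ preserves homotopy limits, but what I actually need is that they jointly reflect homotopy equivalences, which is immediate from Proposition~\ref{prop:hty-equiv}: a morphism in $[\ca\op,\cv]$ is a pointwise weak equivalence, hence a homotopy equivalence between fibrant-cofibrant objects, exactly when each of its components is.

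**Reducing to evaluations.**
Given the joint reflection, Lemma~\ref{lemma:reflect} is not quite the right tool since I want preservation rather than reflection; instead I would use Lemma~\ref{pres2} in the following form. It suffices to show that each composite $\ev_a\circ Q\circ E\colon\ck\to\Int\cv$ preserves $\lambda$-filtered homotopy colimits, because if a family of functors that jointly reflect homotopy equivalences all preserve a given homotopy colimit, then by Lemma~\ref{lemma:reflect} that homotopy colimit is reflected, and combined with the fact that $Q\circ E$ sends the colimit cocone to a cocone, one deduces that $Q\circ E$ preserves it. So I would reduce to analyzing $\ev_a\circ Q\circ E$ for each fixed $a\in\ca$.

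**The pointwise computation.**
Now $\ev_a\circ Q$ is a cofibrant-replacement on the $a$-component, so up to a pointwise homotopy equivalence (supplied by the cofibrant replacement counit $QP\to P$, which is a weak equivalence between fibrant objects) I may replace $\ev_a\circ Q\circ E$ by $\ev_a\circ E$, using Lemma~\ref{pres1} to transfer the preservation property across this homotopy equivalence. But $\ev_a\circ E$ sends $X\in\ck$ to $\ck(Fa,X)$, i.e.\ it is exactly the representable \cv-functor $\ck(Fa,-)$. Since $Fa$ is homotopy $\lambda$-presentable by hypothesis, $\ck(Fa,-)\colon\ck\to\Int\cv$ preserves $\lambda$-filtered homotopy colimits by Definition~\ref{defn:presentable}. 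This closes the pointwise step.

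**Where the difficulty lies.**
The main obstacle will be the bookkeeping in the reduction step: making precise that joint reflection of homotopy equivalences together with pointwise preservation yields genuine preservation by $Q\circ E$, rather than merely reflection. The cleanest route is to invoke Lemma~\ref{lemma:reflect} for the family $(\ev_a)_{a\in\ca}$ on $\Int[\ca\op,\cv]$ applied to the image cocone under $Q\circ E$; one must check that $Q\circ E$ sends the homotopy-colimit cocone in \ck to a cocone exhibiting a candidate homotopy colimit, whose preservation is then witnessed evaluation-by-evaluation. The subtlety is that $Q$ interposes a cofibrant replacement, so I must be careful that the comparison map $l\colon G*_h (QE\, S)\to QE(G*_h S)$ of Remark~\ref{rmk:preservation}(1) is exactly the one whose components get tested by the $\ev_a$; the rest is the homotopy transfer via Lemma~\ref{pres1} and the defining property of homotopy $\lambda$-presentability.
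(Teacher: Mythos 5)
Your overall strategy is the same as the paper's: reduce preservation by $Q\circ E$ to the evaluation functors $\ev_a\colon\Int[\ca\op,\cv]\to\Int\cv$, identify $\ev_a\circ E$ with the representable $\ck(Fa,-)$, and use homotopy $\lambda$-presentability of the $Fa$ together with Lemma~\ref{pres1} to handle the interposed $Q$. Your pointwise computation is correct, and is in fact spelled out more carefully than in the paper. The reduction step, however, has a gap. Lemma~\ref{lemma:reflect} only says that a family which jointly reflects homotopy equivalences reflects those homotopy colimits \emph{which the family itself preserves}; so to apply it to the family $(\ev_a)$, the diagram $QES$ and the cocone obtained by applying $Q\circ E$ to the colimit cocone in \ck, you must know that the homotopy colimit of $QES$ (weighted by the cofibrant weight $K$) exists in $\Int[\ca\op,\cv]$ \emph{and is preserved by each $\ev_a$}. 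You never establish this. The only preservation fact you record about the $\ev_a$ is Lemma~\ref{lemma:reps-cts}, which concerns homotopy \emph{limits} (the correct variance for a covariant representable), and which you then set aside in favour of joint reflection of homotopy equivalences. But joint reflection alone does not suffice: without knowing that $\ev_a$ carries $K*_h QES$, with its cocone, to a homotopy colimit in $\Int\cv$, the evaluation $\ev_a(\ell)$ of the comparison map of Remark~\ref{rmk:preservation} is not a map between two homotopy colimits of $\ev_a QES$, so you cannot conclude it is a homotopy equivalence, and the reflection argument does not start. (Your appeal to Lemma~\ref{pres2} is a red herring for the same reason: its conclusion about $G\circ F$ requires preservation by $F=Q\circ E$ as a hypothesis, which is what you are trying to prove.)

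The missing ingredient is exactly what the paper supplies in one clause: the evaluations preserve homotopy colimits \emph{by the construction of Theorem~\ref{thm:colimit-existence}}. By that construction, a homotopy colimit $K*_h T$ in $\Int[\ca\op,\cv]$ may be taken to be a fibrant replacement $R(K*T)$ of the genuine colimit $K*T$; since colimits in the presheaf category are computed pointwise, $\ev_a R(K*T)$ is a fibrant replacement of the genuine colimit $K*(\ev_a T)$ in \cv, hence is a homotopy colimit $K*_h(\ev_a T)$ with the expected cocone, and by the uniqueness statements of Proposition~\ref{prop:hcolimit-uniqueness} it follows that $\ev_a$ preserves all homotopy colimits. (Theorem~\ref{thm:colimit-existence} also gives the existence of $K*_h QES$, which you use implicitly when invoking the comparison map of Remark~\ref{rmk:preservation}.) With this fact added, your reduction is precisely an application of Lemma~\ref{lemma:reflect}, and the argument goes through, coinciding with the paper's proof.
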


\proof
By Remark~\ref{rmk:preservation}, we have a comparison 
$$\ell\colon \hocolim QES\to QE(\hocolim S)$$ 
for each $\lambda$-filtered diagram $S$ in \ck, and we are to show that this is a homotopy equivalence in $\Int[\ca\op,\cv]$; equivalently, a weak equivalence. 

There is an  evaluation functor $\ev_A\colon\Int[\ca\op,\cv]\to\Int\cv$ for each object $A\in\ca$. These preserve homotopy colimits by the construction of \ref{thm:colimit-existence}, and they jointly reflect weak equivalences, thus they jointly reflect homotopy colimits. But the composites $\ev_A QE$ preserve homotopy $\lambda$-filtered colimits because the $A$ are homotopy $\lambda$-presentable in \ck.
\endproof

\begin{propo}\label{prop:preservation}
  Let \cm and \cn be $\lambda$-combinatorial model \cv-categories, and let $F\colon\cm\to\cn$ be a \cv-functor which preserves $\lambda$-filtered colimits, preserves weak equivalences between fibrant objects, and which maps $\Int\cm$ to $\Int\cn$.  Then the induced $F\colon\Int\cm\to\Int\cn$  preserves $\lambda$-filtered homotopy colimits. 
\end{propo}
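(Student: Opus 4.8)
The plan is to reduce to a single weak-equivalence check and then settle it by comparing homotopy colimits with honest $\lambda$-filtered colimits, where $F$ is genuinely colimit-preserving. Fix a $\lambda$-filtered ordinary category $\cj$ and a diagram $S\colon\overline\cj\to\Int\cm$; since $F$ maps $\Int\cm$ into $\Int\cn$, the composite $FS$ again lands in $\Int\cn$. By Remark~\ref{rmk:preservation}(1) it suffices to prove that the comparison morphism $\ell\colon\hocolim FS\to F(\hocolim S)$ is a homotopy equivalence, and since both its domain and codomain lie in $\Int\cn$, this amounts to showing that $\ell$ is a weak equivalence (Remark~\ref{rmk:ho-Ho}).

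First I would assemble the comparison maps to ordinary colimits. Write $\Delta I\colon\overline\cj\op\to\cv$ for the constant weight and $q\colon K\to\Delta I$ for its cofibrant replacement. By Theorem~\ref{thm:hocolim-colim} the canonical comparisons $k_S\colon\hocolim S\to\colim S$ and $k_{FS}\colon\hocolim FS\to\colim FS$ are both weak equivalences; note that $\colim S$ and $\colim FS$ are $\lambda$-filtered colimits of fibrant objects and so are fibrant by Remark~\ref{rmk:combinatorial}, as are $\hocolim S$ and $\hocolim FS$. Since $F$ preserves $\lambda$-filtered colimits, the canonical map $c\colon\colim FS\to F(\colim S)$ is invertible, and since $F$ preserves weak equivalences between fibrant objects, $F(k_S)$ is a weak equivalence. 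Hence in the square
$$\xymatrix{
\hocolim FS \ar[r]^-{\ell} \ar[d]_{k_{FS}} & F(\hocolim S) \ar[d]^{F(k_S)} \\
\colim FS \ar[r]_-{c} & F(\colim S)
}$$
the three maps $k_{FS}$, $c$ and $F(k_S)$ are weak equivalences, so a two-out-of-three argument in $\Ho\cn$ will force $\ell$ to be one too, \emph{provided} the square commutes up to homotopy.

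The homotopy-commutativity of this square is the crux, and the step I expect to be the main obstacle. The plan is to use that a map out of the homotopy colimit $K*_h FS$ is determined up to homotopy by its cocone $K\to\cl(FS,-)$, through the defining weak equivalence $\beta_{FS}$ and Proposition~\ref{prop:two-notions}; so I would simply compare the cocones of the two composites. By the definition of $\ell$ in Remark~\ref{rmk:preservation} its cocone is $F\circ\delta_S$, where $\delta_S\colon K\to\ck(S,\hocolim S)$ is the cocone of $\hocolim S$; postcomposing with $F(k_S)$ and using functoriality of $F$ rewrites this as $F\circ(k_S)_*\circ\delta_S$, and the characterization of the comparison $k_S$, namely $(k_S)_*\circ\delta_S=\delta_{\Delta I}\circ q$ with $\delta_{\Delta I}$ the universal cocone of $\colim S$, turns it into $F\circ\delta_{\Delta I}\circ q$. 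On the other side, the cocone of $c\circ k_{FS}$ is $c_*\circ\delta^{FS}_{\Delta I}\circ q$; but $c$ \emph{is} the canonical comparison determined by $F$, so $c_*\circ\delta^{FS}_{\Delta I}=F\circ\delta_{\Delta I}$, and we again arrive at $F\circ\delta_{\Delta I}\circ q$. The two cocones agree, so the two composites are homotopic and the square commutes up to homotopy, closing the argument. The delicate points to watch are the distinction between diagrams that commute on the nose and those that commute only up to homotopy, and checking that the hom-objects involved are fibrant so that the cocone-comparison bijection, and hence the two-out-of-three step, is justified.
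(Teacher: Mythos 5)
Your overall strategy---compare both homotopy colimits with the honest $\lambda$-filtered colimits, use preservation of $\lambda$-filtered colimits to identify the bottom edge of a square, use preservation of weak equivalences between fibrant objects for the right edge, and finish by two-out-of-three---is exactly the comparison that drives the paper's proof, and most of your individual steps are sound. But there is a genuine gap at precisely the step you single out as the crux. Your cocone-determination principle (``a map out of $K*_h FS$ is determined up to homotopy by its cocone, through the defining weak equivalence $\beta_{FS}$'') is applied to maps into $F(\colim S)\cong\colim FS$. The homotopy colimit $\hocolim FS$ is a homotopy colimit \emph{in the fibrant \cv-category $\Int\cn$}, so by Definition~\ref{defn:hty-colimit} its defining transformation $\beta$ only has components at objects of $\Int\cn$. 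The corners $\colim FS$ and $F(\colim S)$ of your square are fibrant by Remark~\ref{rmk:combinatorial}, but there is no reason for them to be \emph{cofibrant}: a $\lambda$-filtered colimit of cofibrant objects need not be cofibrant. Hence these objects need not lie in $\Int\cn$, and the ``defining weak equivalence'' you invoke is simply not part of the given data there; the same objection applies to your use of the cocone characterizations of $k_S$ and $k_{FS}$, whose codomains are these possibly non-cofibrant colimits.

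The gap is repairable, but only with an input you do not supply: one must fall back on the explicit construction of Theorem~\ref{thm:colimit-existence}, in which $\hocolim FS=R(K*FS)$ and the structure map $\beta_X$ is, modulo the isomorphism $\cn(K*FS,X)\cong[\overline{\cj}\op,\cv](K,\cn(FS,X))$ coming from the strict universal property of the genuine colimit, just $\cn(r,X)$ for the trivial cofibration $r\colon K*FS\to R(K*FS)$; this is a trivial fibration for \emph{every} fibrant $X$, not only for $X\in\Int\cn$, and with that extension your cocone comparison and two-out-of-three argument do go through. The paper's own proof sidesteps the issue entirely by never using the up-to-homotopy universal property: it works with the strict comparison $K*FS\xrightarrow{\phi}F(K*S)\xrightarrow{Fr}FR(K*S)$, induced by the strict universal property of $K*FS$, places it over the row $\Delta I*FS\to F(\Delta I*S)\to FR(\Delta I*S)$ in a diagram that commutes on the nose, and then concludes from Proposition~\ref{prop:hty-filtered}, the hypotheses on $F$, and ordinary two-out-of-three for weak equivalences, with no homotopy-category bookkeeping at all. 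Reorganizing your argument along these lines---proving the key statement about the single map $Fr\circ\phi$ before fibrant replacement, where every diagram commutes strictly---both closes the gap and shortens the proof.
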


\proof
Let \cj be the free \cv-category on a $\lambda$-filtered ordinary category, let $\Delta I\colon\cj\op\to\cv$ be the weight for conical colimits, and let $q\colon K\to \Delta I$ be the cofibrant replacement of $\Delta I$ in $[\cj\op,\cv]$. Let $S\colon\cj\to\cm$ take values in $\Int\cm$. Thus $\Delta I*S$ is a synonym for $\colim S$. 

Then $\hocolim FS$ may be constructed as the fibrant replacement $R(K*FS)$ of $K*FS$, and similarly $F\hocolim S$ as $FR(K*S)$. The fibrant replacement map $r\colon K*FS\to R(K*FS)$ is of course a weak equivalence, and if we can show that the composite 
$$\xymatrix{
K*FS \ar[r]^-{\phi} & F(K*S) \ar[r]^-{Fr} & FR(K*S) }$$
is a weak equivalence, where $\phi$ is the canonical comparison map, then  preservation of  $\hocolim S$ will follow. 

To do this, consider the diagram 
$$\xymatrix{
K*FS \ar[d]_{q*FS} \ar[r]^-{\phi} & F(K*S) \ar[r]^-{Fr} \ar[d]_{F(q*S)} & FR(K*S) \ar[d]^{FR(q*S)} \\
\Delta I*FS \ar[r]_-{\phi} & F(\Delta I*S) \ar[r]_-{Fr} & FR(\Delta I*S) }$$
where the lower $\phi$ is once again a canonical comparison map; this time invertible, since $F$ preserves $\colim S$ by assumption. Now $q*S$ and $q*FS$ are weak equivalences by Proposition~\ref{prop:hty-filtered}, and so also $FR(q*S)$ is a weak equivalence since $F$ preserves weak equivalences between fibrant objects. Thus it will suffice to show that $Fr\colon F(\Delta I*S)\to FR(\Delta I*S)$ is a weak equivalence. But $\Delta I*S$ is a $\lambda$-filtered colimit of fibrant objects and so fibrant by Remark~\ref{rmk:combinatorial}, thus $r\colon \Delta I*S\to R(\Delta I*S)$ is a weak equivalence between fibrant objects, and so preserved by $F$. 
\endproof

We close this section with a discussion of limit-colimit commutativity in the homotopical context. 

Let $G\colon\cd\to\cv$ and $H\colon \cc\op\to\cv$ be weights. One says that $G$-weighted limits commute with $H$-weighted colimits, in a \cv-category \cm in which these limits and colimits exist, if the \cv-functor $H*-$ below 
$$\xymatrix{
[\cc,\cm] \ar[r]^-{H*-} & \cm & [\cd,\cm] \ar[r]^-{\{G,-\}} & \cm }$$
preserves $G$-weighted limits; or, equivalently, if the \cv-functor $\{G,-\}$ preserves $H$-weighted colimits. Each of these is in turn equivalent to the condition that, for each \cv-functor $S\colon\cd\ox\cc\to\cm$, the canonical comparison $K*\{G,S\}\to \{G,K*S\}$ is invertible. 

In the homotopy setting things are more delicate, since, as observed above, homotopy limits and homotopy colimits need not be functorial in general. For this reason, we restrict to the case of homotopy limits and colimits in $\Int\cm$, for a model \cv-category \cm. We then say that homotopy $G$-weighted limits commute with homotopy $H$-weighted colimits, if the composite of the canonical maps 
$$\xymatrix{
H*Q\{G,S\} \ar[r]^{H*q} & H*\{G,S\} \ar[r] & 
\{G,H*S\} \ar[r]^-{\{G,r\}} & \{G,R(H*S)\} }$$
is a weak equivalence, for any $S\colon\cd\ox\cc\to\cm$ taking values in $\Int\cm$. Here the fibrant replacement of the domain $H*Q\{G,S\}$ has the form $H*_h\{G,S\}_h$, while the cofibrant replacement of the codomain \hbox{$\{G,R(H*S)\}$} has the form $\{G,H*_h S\}_h$. 

\begin{propo}\label{prop:filtered-finite}
Let $\cv$ be a $\lambda$-combinatorial monoidal model category in which all objects are cofibrant, and let \cm be a $\lambda$-combinatorial model \cv-category. Then $\lambda$-filtered homotopy colimits commute in  $\Int\cm$ with $\lambda$-presentable homotopy limits.
\end{propo}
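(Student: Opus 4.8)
The plan is to compute both iterated homotopy (co)limits explicitly via the constructions of Theorem~\ref{thm:colimit-existence}, and then to reduce the required commutativity to the ordinary commutativity of $\lambda$-filtered colimits with $\lambda$-presentable weighted limits in \cm. Write $\cc$ for the $\lambda$-filtered colimit shape, $\Delta I\colon\cc\op\to\cv$ for the conical weight, $q\colon K\to\Delta I$ for its cofibrant replacement, and let $G\colon\cd\to\cv$ be the $\lambda$-presentable cofibrant weight for the limit; the given diagram is $S\colon\cd\ox\cc\to\cm$ with values in $\Int\cm$.

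First I would identify the two sides concretely. On the colimit-of-limits side, the inner homotopy limit $\{G,S(-,c)\}_h$ is, by Theorem~\ref{thm:colimit-existence}(II), the cofibrant replacement $Q\{G,S(-,c)\}$ of the ordinary limit (already fibrant, since $G$ is cofibrant and $S$ is fibrant-valued); the subsequent $\lambda$-filtered homotopy colimit is then, by Proposition~\ref{prop:hty-filtered} and Theorem~\ref{thm:hocolim-colim}, weakly equivalent to the ordinary colimit $\colim_c Q\{G,S(-,c)\}$, which again lies in $\Int\cm$. On the limit-of-colimits side, each $K*_h S(d,-)$ is weakly equivalent to $\colim_c S(d,-)$ by Theorem~\ref{thm:hocolim-colim}, so the diagram $d\mapsto R(K*S)(d)$ is pointwise homotopy equivalent, inside $\Int\cm$, to $d\mapsto\colim_c S(d,-)$; applying the homotopy-limit functor, which by the dual of Proposition~\ref{prop:hcolimit-uniqueness}(5) depends on its diagram only up to pointwise homotopy equivalence, identifies the outer homotopy limit with $Q\{G,\colim_c S(-,c)\}$ up to weak equivalence.

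Next I would chain these identifications. Because the cofibrant replacement functor preserves $\lambda$-filtered colimits (Remark~\ref{rmk:combinatorial}), there is a natural isomorphism $\colim_c Q\{G,S(-,c)\}\cong Q\bigl(\colim_c\{G,S(-,c)\}\bigr)$. The classical commutativity of $\lambda$-filtered colimits with $\lambda$-presentable weighted limits (Kelly \cite{K}), valid in \cm because it is locally $\lambda$-presentable as a \cv-category (enlarging $\lambda$ if necessary) and $G$ is a $\lambda$-presentable weight, supplies a further natural isomorphism $\colim_c\{G,S(-,c)\}\cong\{G,\colim_c S(-,c)\}$. Applying $Q$ and splicing with the identifications above exhibits the domain and codomain of the canonical comparison as connected by a zig-zag of weak equivalences. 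The last step is to verify that the canonical comparison map displayed before the proposition is itself homotopic to this zig-zag; this follows from naturality of the comparison cones $\delta$, of the replacement maps $q$ and $r$, and of the isomorphisms above, since every map involved is the universal one.

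The main obstacle is twofold. First, one cannot argue by testing against all representables $\cm(M,-)$: these preserve homotopy limits (Lemma~\ref{lemma:reps-cts}), but preserve the inner $\lambda$-filtered homotopy colimit only when $M$ is homotopy $\lambda$-presentable, and such $M$ do not jointly detect weak equivalences; this forces the explicit computation in \cm outlined above. Second, the crux is the strict commutativity $\colim_c\{G,S(-,c)\}\cong\{G,\colim_c S(-,c)\}$ in \cm, which rests on $G$ being a genuinely $\lambda$-presentable weight, so that $\{G,-\}$ is a $\lambda$-small weighted limit; establishing this and confirming that the homotopical replacements $Q$ and $R$ interact correctly with it (which they do by Remark~\ref{rmk:combinatorial} and by homotopy-invariance of homotopy limits) is where the real content lies, the remaining steps being formal consequences of Theorem~\ref{thm:colimit-existence} and Theorem~\ref{thm:hocolim-colim}.
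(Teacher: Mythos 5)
Your strategy is at bottom the same as the paper's: transport everything along the cofibrant replacement $q\colon K\to\Delta I$, use that $Q$ preserves $\lambda$-filtered colimits and that weak equivalences and fibrant objects are closed under $\lambda$-filtered colimits (Remark~\ref{rmk:combinatorial}), and reduce to the strict commutativity $\colim_c\{G,S(-,c)\}\cong\{G,\colim_c S(-,c)\}$. But two steps in your execution have genuine gaps. First, you claim that the ordinary $\lambda$-filtered colimits you form lie in $\Int\cm$, and in particular that $d\mapsto\colim_c S(d,c)$ is pointwise homotopy equivalent ``inside $\Int\cm$'' to $d\mapsto R(K*S)(d)$, so that the dual of Proposition~\ref{prop:hcolimit-uniqueness}(5) applies. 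These colimits are indeed fibrant (Remark~\ref{rmk:combinatorial}), but there is no reason for them to be cofibrant: a $\lambda$-filtered colimit of cofibrant objects need not be cofibrant (in chain complexes with the projective model structure, a filtered colimit of free modules in degree $0$ can be flat without being projective). So $\Delta I*S$ is merely a pointwise-fibrant diagram, the comparison $R(K*S)\to\Delta I*S$ is merely a pointwise weak equivalence between fibrant objects rather than a homotopy equivalence, and the invariance results of Proposition~\ref{prop:hcolimit-uniqueness}---stated for diagrams in a fibrant \cv-category such as $\Int\cm$---do not apply. The tool that is needed instead, and that the paper uses, is Ken Brown's lemma: since $G$ is cofibrant, $\{G,-\}$ takes weak equivalences between pointwise-fibrant diagrams to weak equivalences, with no cofibrancy of the colimits ever required.

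Second, and more seriously, the proposition (as defined just before its statement) asserts that one specific composite, namely $K*Q\{G,S\}\xrightarrow{K*q}K*\{G,S\}\to\{G,K*S\}\xrightarrow{\{G,r\}}\{G,R(K*S)\}$, is a weak equivalence---not merely that its two ends are connected by some zig-zag of weak equivalences. Your final sentence, that the canonical comparison is ``homotopic to the zig-zag \dots\ by naturality,'' is exactly the content that must be proved, and in your setup it is not routine, because your zig-zag is built from identifications (liftings as in parts (3) and (5) of Proposition~\ref{prop:hcolimit-uniqueness}) that are defined only up to homotopy. The paper sidesteps this entirely: naturality of the canonical maps gives a single strictly commutative diagram whose top row is the canonical composite, whose bottom row is $\Delta I*Q\{G,S\}\xrightarrow{\Delta I*q}\Delta I*\{G,S\}\xrightarrow{\phi_{\Delta I}}\{G,\Delta I*S\}\xrightarrow{\{G,r\}}\{G,R(\Delta I*S)\}$, and whose vertical maps are $q*Q\{G,S\}$, $q*\{G,S\}$, $\{G,q*S\}$ and $\{G,R(q*S)\}$. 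The bottom row is a weak equivalence ($\Delta I*q$ by closure of weak equivalences under $\lambda$-filtered colimits, $\phi_{\Delta I}$ invertible by the strict commutativity, $\{G,r\}$ by Ken Brown since $\Delta I*S$ is pointwise fibrant), and the outer vertical maps are weak equivalences (Proposition~\ref{prop:hty-filtered} for the left one, Proposition~\ref{prop:hty-filtered} plus Ken Brown for the right one), so two-out-of-three finishes the proof. If you recast your argument in this strictly commutative form, both gaps disappear.
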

\proof
Let \cj be a $\lambda$-filtered ordinary category, and consider  $\Delta I\colon\overline\cj\op\to\cv$ and its cofibrant replacement $q\colon K\to\Delta I$. 
 Let $G\colon\cd\to\cv$ be $\lambda$-presentable and cofibrant, and let $S\colon\cd\ox\overline\cj\to\cm$ take values in $\Int\cm$. 

Consider the commutative diagram 
$$\xymatrix{
K*Q\{G,S\} \ar[r]^{K*q} \ar[d]_{q*Q\{G,S\}} & 
K*\{G,S\} \ar[r]^{\phi_K} \ar[d]_{q*\{G,S\}} & 
\{G,K*S\} \ar[r]^{\{G,r\}} \ar[d]^{\{G,q*S\}} & 
\{G,R(K*S)\} \ar[d]^{\{G,R(q*S)\}} \\
\Delta I*Q\{G,S\} \ar[r]_{\Delta I*q} & 
\Delta I*\{G,S\} \ar[r]_{\phi_{\Delta I}}  & 
\{G,\Delta I*S\} \ar[r]_{\{G,r\}} & 
\{G,R(\Delta I*S)\} 
}$$
in which the maps $\phi_K$ and $\phi_{\Delta I}$ are the canonical comparisons from the non-homotopy situation.

We are to prove that the upper horizontal composite is a weak equivalence. Now $\Delta I*q$ is a $\lambda$-filtered colimit of weak equivalences, so is a weak equivalence by Remark~\ref{rmk:combinatorial}; while $\phi_{\Delta I}$ is invertible, since $\lambda$-presentable limits commute with $\lambda$-filtered colimits. Also $S(D,J)$ is fibrant for all values $D\in\cd$ and $J\in\cj$, thus the values of $\Delta I*S$ are $\lambda$-filtered colimits of fibrant objects, and so fibrant by Remark~\ref{rmk:combinatorial} once again. In other words, $\Delta I*S$ is fibrant in $[\cd,\cm]$.  Thus $r\colon \Delta I*S\to R(\Delta I*S)$ is a weak equivalence between fibrant objects. Since $G$ is cofibrant and \cm is a model \cv-category, $\{G,r\}$ is also a weak equivalence by Ken Brown's lemma. 
This now proves that the lower horizontal composite is a weak equivalence. 

It will suffice, therefore, to prove that the left and right vertical maps are weak equivalences. The case of the left vertical map $q*Q\{G,S\}$ follows from Proposition~\ref{prop:hty-filtered}. By the same proposition, we know that $q*S$ is a weak equivalence, and so that $R(q*S)$ is a weak equivalence between fibrant objects; thus, using Ken Brown's lemma and the fact that $G$ is cofibrant once again, $\{G,R(q*S)\}$ is a weak equivalence.
\endproof

\section{Dwyer-Kan equivalences}\label{sect:DK}

Recall that a \cv-functor $W\colon\ck\to\cl$ is said to have a property ``locally'', if each of the induced maps $\ck(K,K')\to\cl(WK,WK')$ between hom-objects has the property (in \cv). 
In particular, $W\colon\ck\to\cl$ is {\em locally a weak equivalence} if each $W\colon\ck(K,K')\to\cl(WK,WK')$ is a weak equivalence in \cv. 

Recall also that a $\cv$-functor $W\colon\ck\to\cl$ is called a {\em Dwyer-Kan equivalence} or just a {\em weak equivalence} if
\begin{enumerate}
\item it is {\em locally a weak equivalence}, in the above sense, and 
\item it is {\em homotopically surjective on objects}, in the sense that for each object $L\in\cl$ there is an object $K\in\ck$ and a homotopy equivalence $L\to WK$.
\end{enumerate}
If \cl is fibrant, then there exists a homotopy equivalence $L\to WK$ if and only if there exists a homotopy equivalence $WK\to L$: see Remark~\ref{rmk:hty-equiv-symmetry}.

For well-behaved \cv, these Dwyer-Kan equivalences are the weak equivalences for a model structure on the category of small \cv-categories: see \cite{BM,L,M}.
 
\begin{propo}\label{hococomplete}
  Let \cv be a combinatorial monoidal model category, and let $W\colon\ck\to\cl$ be a Dwyer-Kan equivalence between fibrant \cv-categories. Then
  \begin{enumerate}
  \item $W$ preserves any existing homotopy colimits;
    \item $W$ creates homotopy colimits, in the sense that if $S\colon\cd\to\ck$ is a diagram for which the homotopy colimit $G*_h WS$ exists in \cl, then the homotopy colimit $G*_h S$ exists in \ck (and is preserved by $W$);
      \item $W$ preserves and reflects presentability, in the sense that if \cl has homotopy $\lambda$-filtered colimits, then an object $A\in\ck$ is homotopy 
      $\lambda$-presentable if and only if  $WA\in\cl$ is so. 
  \end{enumerate}
\end{propo}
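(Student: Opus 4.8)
The plan is to base everything on two structural consequences of $W$ being a Dwyer--Kan equivalence between fibrant $\cv$-categories. First, $W$ both preserves and reflects homotopy equivalences: the characterization in Proposition~\ref{prop:hty-equiv} of a homotopy equivalence $f$ by the requirement that $\ck(C,f)$ be a weak equivalence for $C$ the (co)domain of $f$ is transported across the weak equivalences $W\colon\ck(A,B)\to\cl(WA,WB)$ in the evident strictly commuting squares. Second, the induced $\ho W\colon\ho\ck\to\ho\cl$ is an equivalence of ordinary categories: it is bijective on hom-sets because $\ho\ck(A,B)=U(\ck(A,B))$ (Definition~\ref{defn:hoK}) and $U$ inverts weak equivalences, while homotopy surjectivity makes it essentially surjective (an object $L$ is homotopy equivalent to $WK$ exactly when $L\cong WK$ in $\ho\cl$). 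Throughout I will repeatedly use that, for $G$ cofibrant, $[\cd\op,\cv](G,-)$ preserves weak equivalences between fibrant objects.

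For (1), let $\delta\colon G\to\ck(S,G*_h S)$ exhibit the homotopy colimit and let $\delta'=W\delta$; I must show that the associated $\beta'_L\colon\cl(W(G*_h S),L)\to[\cd\op,\cv](G,\cl(WS,L))$ is a weak equivalence for all $L$. First I treat $L=WK$: in the square whose top edge is the defining $\beta_K$, whose left edge is $W\colon\ck(G*_h S,K)\to\cl(W(G*_hS),WK)$, and whose right edge is $[\cd\op,\cv](G,W)$ (a weak equivalence since $W\colon\ck(S,K)\to\cl(WS,WK)$ is a pointwise weak equivalence of fibrant objects and $G$ is cofibrant), two-out-of-three forces $\beta'_{WK}$ to be a weak equivalence. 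Next I note that the property ``$\beta'_L$ is a weak equivalence'' is invariant under homotopy equivalence of $L$, because both $\cl(W(G*_h S),-)$ and $[\cd\op,\cv](G,\cl(WS,-))$ carry homotopy equivalences to weak equivalences (Proposition~\ref{prop:hty-equiv} together with cofibrancy of $G$). Since every $L$ is homotopy equivalent to some $WK$, $\beta'_L$ is always a weak equivalence.

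For (2), given that $G*_h WS$ exists in $\cl$, I choose by homotopy surjectivity an object $K$ with $WK\simeq G*_h WS$; by Proposition~\ref{prop:hcolimit-uniqueness}(2) the object $WK$ itself carries a cocone $\epsilon\colon G\to\cl(WS,WK)$ exhibiting it as $G*_h WS$. As $W\colon\ck(S,K)\to\cl(WS,WK)$ is a pointwise weak equivalence of fibrant objects and $G$ is cofibrant, $\epsilon$ lifts through $W$ up to homotopy to a cocone $\delta\colon G\to\ck(S,K)$ with $W\delta\simeq\epsilon$. It remains to see that $\delta$ exhibits $K$ as $G*_h S$, i.e. that each $\beta_C\colon\ck(K,C)\to[\cd\op,\cv](G,\ck(S,C))$ is a weak equivalence. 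For this I compare $[\cd\op,\cv](G,W)\circ\beta_C$ with the composite of $W\colon\ck(K,C)\to\cl(WK,WC)$ and the $WC$-component of the transformation induced by $\epsilon$: using that $W$ is a $\cv$-functor, these two maps are transposes of maps differing only by $W\delta$ versus $\epsilon$, hence agree in $\Ho\cv$ (after a functorial cofibrant replacement of $\ck(K,C)$, to make the homotopy descend). Since $W$, $[\cd\op,\cv](G,W)$, and the relevant component of $\epsilon$ are weak equivalences, two-out-of-three gives that $\beta_C$ is one. Preservation by $W$ is then part (1).

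For (3), all objects of $\cv$ are cofibrant, so $\ck(A,-)$ and $\cl(WA,-)$ land in $\Int\cv$; first note that $W$ creates $\lambda$-filtered homotopy colimits by part (2), so $\ck$ has them and $W$ preserves them by part (1). The morphism $W$ gives a pointwise homotopy equivalence $\ck(A,-)\Rightarrow\cl(WA,-)\circ W$, so by Lemma~\ref{pres1} the functor $\ck(A,-)$ preserves a $\lambda$-filtered $\hocolim S$ iff $\cl(WA,-)\circ W$ does, and by Lemma~\ref{pres2} (with $W$ preserving $\hocolim S$) the latter holds iff $\cl(WA,-)$ preserves $\hocolim WS$. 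Thus for every $\ck$-diagram $S$, $\ck(A,-)$ preserves $\hocolim S$ iff $\cl(WA,-)$ preserves $\hocolim WS$. The reflecting direction (presentability of $WA$ implies that of $A$) is then immediate, as presentability of $WA$ covers in particular the diagrams of the form $WS$. The preserving direction requires controlling $\cl(WA,-)$ on an \emph{arbitrary} $\lambda$-filtered $T\colon\overline\cj\to\cl$, and the main obstacle is to replace $T$, up to pointwise homotopy equivalence, by a diagram of the form $WS$ with $S\colon\overline\cj\to\ck$; once this is achieved, Proposition~\ref{prop:hcolimit-uniqueness}(5) identifies $\hocolim T$ with $\hocolim WS$ compatibly with $\cl(WA,-)$ and the displayed equivalence concludes. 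I expect this rectification of a diagram along $W$ to be the delicate step: the equivalence $\ho W$ first lifts $T$ to a functor $\cj\to\ho\ck$, which one must then rigidify to an honest $\cv$-functor $\overline\cj\to\ck$, exploiting the freeness (cofibrancy) of $\overline\cj$ and the local weak equivalences to arrange $WS\simeq T$ pointwise.
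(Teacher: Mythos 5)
Your proofs of (1) and (2) are correct and essentially the paper's own. For (1) the paper runs the same argument, packaged as one large commuting diagram built from $\ck(S,-)$, $W$ and $\delta^*$: two-out-of-three gives the defining weak equivalence at objects of the form $WA$, and homotopy surjectivity plus invariance under homotopy equivalence extends it to all of $\cl$, exactly as you do. For (2) the paper likewise reduces to $L=WK$ and lifts the cocone up to homotopy through the local weak equivalence $W_{S,K}$; the only difference is that the paper then replaces the cocone $\gamma$ by $W\delta$ (legitimate by Proposition~\ref{prop:hcolimit-uniqueness}(4)), so the triangle commutes strictly and the diagram of part (1) is reused verbatim, whereas you carry the homotopy through the comparison maps and finish in $\Ho\cv$ by saturation. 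Your route works; note only that the cofibrant replacement of $\ck(K,C)$ is unnecessary, since your two composites differ by precomposition with the homotopic cocones $\epsilon$ and $W\delta$ (cofibrant domain, fibrant codomain), and those precomposition maps already agree in $\Ho\cv$.

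In (3) your reduction via Lemmas~\ref{pres1} and~\ref{pres2} is again the paper's, and the reflection direction ($WA$ homotopy $\lambda$-presentable implies $A$ homotopy $\lambda$-presentable) is complete. The preservation direction you leave unproved, and the obstacle you name is real: Lemma~\ref{pres2} only controls homotopy colimits of diagrams of the form $WS$ with $S\colon\overline{\cj}\to\ck$, while homotopy $\lambda$-presentability of $WA$ quantifies over arbitrary $\lambda$-filtered diagrams $T\colon\overline{\cj}\to\cl$; one would need to replace such a $T$ by some $WS$ up to a \emph{strictly} $\cv$-natural pointwise homotopy equivalence before Proposition~\ref{prop:hcolimit-uniqueness}(5) applies. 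Your closing sketch does not achieve this: $\overline{\cj}$ is free on a \emph{category}, not on a graph, so a lift built morphism-by-morphism must respect composition, such $\cv$-categories are in general not cofibrant in any model structure on \vcat, and rigidifying a lift of the induced functor $\cj\to\ho\ck$ into an honest $\cv$-functor together with a strictly natural comparison to $T$ is a genuine strictification problem. As a self-contained proof, then, your part (3) has a gap.

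You should know, however, that the paper's own proof of (3) is silent on precisely this point: it asserts, citing Lemma~\ref{pres2}, that ``$\cl(WA,-)$ preserves homotopy $\lambda$-filtered colimits if and only if $\cl(WA,W-)$ does'', which the lemma justifies only for colimits of diagrams in the image of $W$. So your proposal matches the paper everywhere the paper actually gives an argument, and the step you flag as delicate is elided there rather than supplied. It is worth noting that the later applications of this proposition (Propositions~\ref{prop:hlp-invariance} and~\ref{prop:shlp-invarianece}) use only the reflection direction, which both you and the paper establish in full.
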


\proof
(1) Let $G\colon\cd\op\to\cv$ be a cofibrant weight, $S\colon\cd\to\ck$ a diagram in \ck, and $\delta\colon G\to \ck(S,K)$ a morphism in $[\cd\op,\cv]$ exhibiting $K$ as the homotopy weighted colimit $G*_h S$. As earlier, we write $\dhom$ for $[\cd\op,\cv]$. In the diagram 
$$\xymatrix{
\ck(K,A) \ar[dd]_-{\ck(S,-)} \ar[r]^-{W_{K,A}} & \cl(WK,WA) \ar[d]^{\cl(WS,-)} \\
& \dhom(\cl(WS,WK),\cl(WS,WA)) \ar[d]^-{W^*} \\
\dhom(\ck(S,K), \ck(S,A)) \ar[d]_-{\delta^*} \ar[r]^-{W_*} & 
\dhom(\ck(S,K),\cl(WS,WA)) \ar[d]^-{\delta^*} \\
\dhom(G,\ck(S,A)) \ar[r]_-{W_*} & \dhom(G,\cl(WS,WA)) }$$
the left vertical $\delta^* \circ \ck(S,-)$ is a weak equivalence by the universal property of  $K=G*_h S$; while the lower horizontal  $W_*$ is a weak equivalence because $G$ is cofibrant and $W\colon\ck(S,A)\to\cl(WS,WA)$ is a weak equivalence between fibrant objects.  Since the upper horizontal  $W_{K,A}$ is a weak equivalence, it follows that the right vertical $\delta^* \circ W^*\circ\cl(WS,-)$ is one too. But this implies, using the fact that any $B\in\cl$ is homotopy equivalent to $WA$ for some $A\in\ck$,  that the composite 
$$\xymatrix{ 
G \ar[r]^-{\delta} & \ck(S,K) \ar[r]^-{W} & \cl(WS,WK) }$$
exhibits $WK$ as $G*_h WS$. 

\noindent (2)
Let $G\colon\cd\op\to\cv$ be a cofibrant weight, $S\colon\cd\to\ck$ a diagram in \ck, and $\gamma\colon G\to \cl(WS,L)$ a morphism in $[\cd\op,\cv]$ exhibiting $L$ as the homotopy weighted colimit $G*_h WS$. Since $W$ is homotopy surjective, we may suppose without loss of generality that the object $L$ has the form $WK$ for some $K\in\ck$. 

Since $W_{S,K}\colon\ck(S,K)\to\cl(WS,WK)$ is a weak equivalence between fibrant objects, and $G$ is (fibrant and) cofibrant, there is a map $\delta$ making the triangle
$$\xymatrix{
& \ck(S,K) \ar[d]^{W_{S,K}} \\
G \ar[ur]^{\delta} \ar[r]_-{\gamma} & \cl(WS,WK) }$$
commute up to homotopy. By Proposition~\ref{prop:hcolimit-uniqueness}, the composite $W_{S,K}\delta$ also exhibits $WK$ as the homotopy colimit $G*_h WS$; henceforth we take this to be $\gamma$, so that the triangle commutes on the nose. 

Consider once again the large diagram appearing in the proof of (1). This time we know that the composite $\delta^*\circ W^*\circ \cl(WS,-)$ is a weak equivalence; since of course $W_{K,A}$ is also a weak equivalence, the upper composite is a weak equivalence hence so too is the lower one. The lower horizontal $W_*$ is a weak equivalence just as before, hence $\delta^*\circ\ck(S,-)$ is a weak equivalence, as required. 

(3) Suppose that \cl has homotopy $\lambda$-filtered colimits; it follows by (2) that \ck does so too, and that $W\colon \ck\to\cl$ preserves them. Let $A\in\ck$; we are to show that $\ck(A,-)$ preserves homotopy $\lambda$-filtered colimits if and only if $\cl(WA,-)$ does so. But $W$ preserves homotopy $\lambda$-filtered colimits by (1), thus by Lemma~\ref{pres2} $\cl(WA,-)$ preserves them if and only if $\cl(WA,W-)$ does; while $W_{A,-}\colon \ck(A,-)\to\cl(WA,W-)$ is a pointwise homotopy equivalence, thus by Lemma~\ref{pres1} $\cl(WA,W-)$ preserves homotopy $\lambda$-filtered colimits if and only if $\ck(A,-)$ does so. 
\endproof

The next result holds without the \cv-categories needing to be fibrant.

Let $V\colon\ca\to\cb$ be a \cv-functor between small \cv-categories. Then composition with $V$ induces a \cv-functor $V^*\colon[\cb\op,\cv]\to[\ca\op,\cv]$, and this has a left adjoint $V_!\dashv V^*$ given by left Kan extension. If we give the presheaf categories the projective model structure, then $V^*$ preserves fibrations and weak equivalence, more or less by definition; thus $V_!\dashv V^*$ becomes a Quillen adjunction.

\begin{propo}\label{prop:DKimpliesQ}
  Let $V\colon\ca\to\cb$ be a Dwyer-Kan equivalence between small \cv-categories. 
Then the induced adjunction $V_!\dashv V^*$ is a Quillen equivalence. 
\end{propo}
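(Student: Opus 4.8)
The plan is to verify the standard criterion for a Quillen equivalence: having already observed that $V^*$ is right Quillen (it preserves pointwise fibrations and pointwise weak equivalences essentially by definition), it suffices to show (a) that $V^*$ reflects weak equivalences between fibrant objects, and (b) that for every cofibrant $X\in[\ca\op,\cv]$ the derived unit is a weak equivalence. I would in fact prove the two stronger statements that $V^*$ reflects \emph{all} weak equivalences and that the \emph{ordinary} unit $\eta_X\colon X\to V^*V_!X$ is already a weak equivalence for cofibrant $X$. The second statement upgrades automatically to the derived unit: since $V^*$ preserves all weak equivalences, it sends the fibrant-replacement map $V_!X\to RV_!X$ to a weak equivalence, so $X\to V^*RV_!X$ is a weak equivalence whenever $\eta_X$ is.

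For (a), I would use homotopical surjectivity together with the fact, recorded in Remark~\ref{rmk:ho-Ho}, that every $\cv$-functor---in particular every presheaf $F\colon\cb\op\to\cv$---preserves homotopy equivalences, and that homotopy equivalences in $\cv$ are weak equivalences. Given $b\in\cb$, choose $a\in\ca$ and a homotopy equivalence $h\colon b\to Va$; then $F(h)$ is a weak equivalence for every presheaf $F$. Hence if $\theta\colon F\to G$ has $V^*\theta$ a weak equivalence (that is, $\theta_{Va}$ a weak equivalence for all $a$), then naturality of $\theta$ along $h$ together with two-out-of-three forces $\theta_b$ to be a weak equivalence, so $\theta$ is a pointwise, hence a, weak equivalence. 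This argument uses no fibrancy of $\ca$ or $\cb$, in keeping with the remark preceding the statement.

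For (b) I first treat the generating cells. Since $V_!$ carries a representable $\ca(-,a)$ to the representable $\cb(-,Va)$, the unit $\eta_{\ca(-,a)}\colon\ca(-,a)\to\cb(V-,Va)$ is the map induced by $V$ on hom-objects, a pointwise weak equivalence because $V$ is locally a weak equivalence; as $V_!$ and $V^*$ are $\cv$-functors they preserve copowers, so $\eta_{\ca(-,a)\cdot K}=\eta_{\ca(-,a)}\cdot K$, and tensoring by $K\in\cv$ preserves weak equivalences because all objects of $\cv$ are cofibrant. Thus $\eta$ is a weak equivalence on all $\ca(-,a)\cdot K$, in particular on the domains and codomains of the generating projective cofibrations. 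The main obstacle is then to propagate this to all cofibrant $X$. The naive route---apply $\id$ and $V^*V_!$ to a cellular presentation and invoke the gluing lemma---is blocked because $V^*$ does not preserve cofibrations, so the diagram obtained by applying $V^*V_!$ need not consist of cofibrations. I would circumvent this by arguing \emph{pointwise} in $\cv$, where all objects are cofibrant. Evaluating at $a'\in\ca$ gives $(V^*V_!X)(a')=(V_!X)(Va')$, and since $V_!$ preserves colimits and sends representables to representables, a cell attachment of $\ca(-,a)\cdot(K\to L)$ to $X$ becomes, after evaluation at $Va'$, an honest pushout in $\cv$ of the cofibration $\cb(Va',Va)\cdot K\to\cb(Va',Va)\cdot L$. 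So at each $a'$ both the pushout building $X$ and the pushout building $V_!X$ are pushouts of cofibrations in $\cv$, the vertical comparison maps are weak equivalences by the inductive hypothesis and the cell computation, and the gluing lemma applies \emph{in $\cv$}. The same pointwise reasoning handles transfinite composites (towers of cofibrations in $\cv$, where levelwise weak equivalences yield a weak equivalence on the colimit, using left properness of $\cv$) and retracts. Hence $(\eta_X)_{a'}$ is a weak equivalence for every $a'$, so $\eta_X$ is a weak equivalence for every cofibrant $X$.

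Finally I would assemble the pieces: by (a) the functor induced by $V^*$ on homotopy categories is conservative, and by (b) the derived unit is invertible. A conservative right adjoint whose unit is invertible is necessarily part of an adjoint equivalence (the triangle identity makes the derived counit invertible after applying the conservative $V^*$, hence invertible), so the derived adjunction is an equivalence and $V_!\dashv V^*$ is a Quillen equivalence.
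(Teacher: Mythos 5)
Your proof is correct, and its overall skeleton is the same as the paper's: both arguments reduce the Quillen equivalence to (i) the unit $X\to V^*V_!X$ being a weak equivalence for cofibrant $X$ (which upgrades to the derived unit since $V^*$ preserves all weak equivalences), and (ii) $V^*$ reflecting weak equivalences, which the paper derives, as you do, from homotopical surjectivity of $V$ (the paper merely asserts this "follows easily"; your naturality-square plus two-out-of-three argument is the intended one). Where you genuinely diverge is in the proof of (i), which is the mathematical heart. The paper identifies the component of the unit at $A$ with the map $M*V_{A,-}\colon M*\ca(A,-)\to M*\cb(VA,V-)$, using co-Yoneda and the colimit formula for $V_!$, and then invokes its invariance result (Lemma~\ref{lemma:invariance}): applying a cofibrant weight to a pointwise weak equivalence of diagrams landing in $\Int\cv$ gives a weak equivalence. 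You instead run a cellular induction over a cell presentation of the cofibrant presheaf, checking the unit on representables and their copowers and propagating through pushouts, transfinite composites and retracts by working pointwise in $\cv$ with the gluing lemma. What each buys: the paper's route is much shorter, but Lemma~\ref{lemma:invariance} requires $\ca(A,-)$ and $\cb(VA,V-)$ to take values in $\Int\cv$, i.e.\ the hom-objects of $\ca$ and $\cb$ to be fibrant as well as cofibrant --- which sits somewhat awkwardly with the paper's own remark that this proposition needs no fibrancy hypotheses; your induction uses only the assumption that all objects of $\cv$ are cofibrant (for the pushout-product, gluing, left-properness and $-\cdot K$ steps), so it literally delivers the fibrancy-free statement, at the cost of length. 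Your endgame (Hovey's criterion, or conservativity of the derived right adjoint plus an invertible derived unit forcing the derived counit to be invertible via the triangle identity) is the same in content as the paper's direct verification that $V^*(e\circ V_!q)$, and hence $e\circ V_!q$, is a weak equivalence. One cosmetic point: "$V_!$ and $V^*$ are $\cv$-functors" is not by itself a reason they preserve copowers; the correct reasons are that $V_!$ is a left adjoint $\cv$-functor and that copowers in presheaf categories are computed pointwise, so $V^*$ preserves them trivially.
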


\proof
Write $n\colon 1\to V^*V_!$ for the unit and $e\colon V_!V^*\to 1$ for the counit of the adjunction.

First we show that,  for any cofibrant $M\colon\ca\op\to\cv$, the composite
$$\xymatrix{
M \ar[r]^-n & V^* V_! M \ar[r]^{V^*r} & V^* R V_! M }$$
is a weak equivalence, where $r\colon 1\to R$ denotes the fibrant replacement functor. Now $V^*$ preserves the weak equivalence $r\colon V_! M\to RV_!M$, so it will suffice to prove that the unit $n\colon M\to V^* V_!M$ of the original adjunction is a weak equivalence; in other words, that for each $A\in\ca$, the map $MA\to\ev_{VA} V_!M$ is a weak equivalence. But this map is obtained by applying $M*-$ to the pointwise weak equivalence 
$$\xymatrix @C5pc {
\ca \ar@/^1pc/[rr]^{\ca(A,-)}_{~}="1" \ar@/_1pc/[rr]_{\cb(VA,V-)}^{~}="2" && \Int\cv 
\ar@{=>}"1";"2"^{V_{A,-}} }$$
and weak equivalences in $\Int\cv$ are in fact homotopy equivalences, so $V_{A,-}\colon\ca(A,-)\to\cb(VA,V-)$ is in fact a pointwise homotopy equivalence. 
Thus $M*V_{A,-}\colon MA\to \ev_{VA}V_!M$ is a weak equivalence by Lemma~\ref{lemma:invariance} below.

This proves that the unit of the derived adjunction is invertible; we now turn to the counit. For this, we should show that the composite 
$$\xymatrix{
V_!QV^*N \ar[r]^{V_!q} & V_!V^*N \ar[r]^{e} & N }$$
is a weak equivalence for any fibrant $N$. In  the diagram
$$\xymatrix{
V^*V_!QV^*N \ar[r]^{V^*V_!q} & V^*V_!V^*N \ar[r]^{V^*e} & V^*N \\
QV^*N \ar[u]^{n} \ar[r]_{q} & V^*N \ar[u]^{n} \ar@{=}[ur] }$$
$q$ is of course a weak equivalence, and $n\colon QV^*N\to V^*V_!QV^*N$ is a weak equivalence by the first half of the proof. Thus  $V^*(e.V_!q)$ is a weak equivalence. But  
it follows easily from the fact that $V$ is homotopically surjective on objects that  $V^*$ reflects weak equivalences; thus $e.V_!q$ is a weak equivalence as required. 
\endproof

The following lemma is closely related to parts of Proposition~\ref{prop:hcolimit-uniqueness}; indeed it could be deduced from that proposition if we restricted to the case of fibrant \cv-categories. But in some sense it is more basic, and so we give an independent proof. 

\begin{lemma}\label{lemma:invariance}
Let \cm be a model \cv-category, and let $S,T\colon\cd\to\cm$ take values in $\Int\cm$. Let $w\colon S\to T$ be a pointwise weak equivalence, and let $G\colon\cd\op\to\cv$ be a cofibrant weight. Then $G*w\colon G*S\to G*T$ is a weak equivalence in $\cm$.
\end{lemma}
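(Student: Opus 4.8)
The plan is to reduce the statement to two applications of Ken Brown's lemma, by probing $G*w$ with the corepresentable functors $\cm(-,M)$ for fibrant $M$, and then reflecting the resulting weak equivalences in $\cv$ back to a statement about $G*w$ itself via the passage to the homotopy category.

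First I would record that both $G*S$ and $G*T$ are cofibrant in $\cm$. This is exactly the argument used in Part~I of the proof of Theorem~\ref{thm:colimit-existence}: since $G$ is a cofibrant weight and $S,T$ take values in $\Int\cm$ they are pointwise cofibrant, and the universal property of the copower turns a lifting problem for $G*S$ against a trivial fibration $p$ into one for $G$ against $\cm(S,p)$, which is a trivial fibration because $\cm$ is a model \cv-category. Thus $G*w$ is a morphism between cofibrant objects, and it suffices to prove it is a weak equivalence.

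Next, fix a fibrant object $M\in\cm$ and consider $\cm(G*w,M)\colon\cm(G*T,M)\to\cm(G*S,M)$. By the defining universal property of the weighted colimit, and its naturality in the diagram variable, this map is identified with
$$[\cd\op,\cv](G,\cm(w,M))\colon[\cd\op,\cv](G,\cm(T,M))\to[\cd\op,\cv](G,\cm(S,M)).$$
Here $\cm(w,M)$ has components $\cm(w_D,M)$; since each $w_D\colon SD\to TD$ is a weak equivalence between cofibrant objects and $M$ is fibrant, Ken Brown's lemma shows each $\cm(w_D,M)$ is a weak equivalence, and it is between fibrant objects of \cv. Hence $\cm(w,M)$ is a weak equivalence between fibrant objects in the projective model structure on $[\cd\op,\cv]$. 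Since $G$ is cofibrant, the functor $[\cd\op,\cv](G,-)$ is right Quillen, so a second application of Ken Brown's lemma shows that $[\cd\op,\cv](G,\cm(w,M))$, and therefore $\cm(G*w,M)$, is a weak equivalence in \cv, for every fibrant $M$.

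The final, and most delicate, step is to reflect this family of weak equivalences back to $G*w$ itself. Writing $U=\Ho\cv(I,P-)$ as in Section~\ref{sect:equivalences}, I would apply $U$: since $G*S$ and $G*T$ are cofibrant and $M$ is fibrant, each of $\cm(G*S,M)$ and $\cm(G*T,M)$ is fibrant in \cv, so by Proposition~\ref{prop:two-notions} the set $U(\cm(G*S,M))$ is precisely the set of homotopy classes of maps $G*S\to M$, that is $\Ho\cm(G*S,M)$, and similarly for $T$. As $U$ sends weak equivalences to bijections, $\Ho\cm(G*w,M)$ is a bijection for every fibrant $M$. Since every object of $\Ho\cm$ is isomorphic to a fibrant one, the Yoneda lemma in $\Ho\cm$ forces $G*w$ to become invertible in $\Ho\cm$, i.e.\ to be a weak equivalence in \cm. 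The main obstacle is precisely this reflection step: the two Ken Brown applications are routine bookkeeping, but turning ``corepresentably a weak equivalence'' into ``a weak equivalence'' requires the homotopy-category Yoneda argument together with Proposition~\ref{prop:two-notions} to identify $U(\cm(X,M))$ with genuine homotopy classes of maps.
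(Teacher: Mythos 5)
Your proposal is correct, and its first two steps coincide with the paper's own proof: the paper likewise observes that $G*S$ and $G*T$ are cofibrant, and likewise probes $G*w$ with $\cm(-,X)$ for fibrant $X$, identifying $\cm(G*w,X)$ with $[\cd\op,\cv](G,\cm(w,X))$ and concluding it is a weak equivalence because $G$ is cofibrant and $\cm(w,X)$ is a weak equivalence between fibrant objects (the paper gets this last point from $w$ being a pointwise \emph{homotopy} equivalence, since $S,T$ land in $\Int\cm$, rather than from Ken Brown's lemma, but this is cosmetic). Where you genuinely diverge is the final ``reflection'' step. The paper stays inside the enriched framework: it passes to a fibrant replacement $R(G*w)$, a morphism of $\Int\cm$, deduces that $\cm(R(G*w),X)$ is a weak equivalence for all $X\in\Int\cm$, invokes Proposition~\ref{prop:hty-equiv} in the fibrant $\cv$-category $\Int\cm$ to conclude that $R(G*w)$ is a homotopy equivalence (hence a weak equivalence), and finishes by two-out-of-three since $R$ reflects weak equivalences. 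You instead work in the classical homotopy category: using Proposition~\ref{prop:two-notions} (equivalently Remark~\ref{rmk:hty-equiv-symmetry}) to identify $U(\cm(A,M))$ with $\Ho\cm(A,M)$ for $A$ cofibrant and $M$ fibrant, you conclude that precomposition with $G*w$ is bijective on $\Ho\cm(-,M)$ for all fibrant $M$, and then apply Yoneda in $\Ho\cm$ together with the saturation property of model categories (a map is a weak equivalence if and only if it becomes invertible in the homotopy category). Both routes are sound; the trade-off is that the paper's version needs only machinery already developed in the paper (and proves en route the slightly stronger fact that $R(G*w)$ is a homotopy equivalence in $\Int\cm$), whereas yours avoids fibrant replacement and the enriched equivalence criterion at the cost of importing the saturation theorem, a nontrivial classical result about model categories that the paper's argument never needs.
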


\proof
Since $S$ and $T$ land in $\Int\cm$, the map $w$ is in fact a pointwise homotopy equivalence. Thus the induced map $\cm(w,X)\colon\cm(T,X)\to\cm(S,X)$ is a pointwise homotopy equivalence for any $X\in\cm$. 

If in fact $X$ is fibrant, then $\cm(S,X)$ and $\cm(T,X)$ are fibrant in $[\cd\op,\cv]$, and so the induced map 
$$[\cd\op,\cv](G,\cm(T,X))\to [\cd\op,\cv](G,\cm(S,X))$$
is a weak equivalence (since $G$ is cofibrant). Thus in turn the map 
$$\cm(G*w,X)\colon \cm(G*T,X)\to \cm(G*S,X)$$
is a weak equivalence for all fibrant $X$. 

Now $G*S$ and $G*T$ are cofibrant colimits of cofibrant objects, so are cofibrant; but they need not be fibrant. In the commutative diagram
$$\xymatrix @C5pc {
\cm(R(G*T),X) \ar[r]^{\cm(R(G*w),X)} \ar[d]_{\cm(r,X)} & \cm(R(G*S),X) \ar[d]^{\cm(r,X)}\\
\cm(G*T,X) \ar[r]_{\cm(G*w,X)} & \cm(G*S,X) }$$
the vertical maps are weak equivalences (since $r$ is a trivial cofibration and $X$ is fibrant), and $\cm(G*w,X)$ is a weak equivalence, thus also $\cm(R(G*w),X)$ is a weak equivalence. But this is true for all $X\in\Int\cm$, and so in fact $R(G*w)$ is a homotopy equivalence, and in particular a weak equivalence. Finally $R$ reflects weak equivalences, and so $G*w$ is a weak equivalence as required. 
\endproof

\section{Characterization of small homotopy orthogonality classes}\label{sect:orthogonality-classes}

\begin{propo}\label{prop:limits-orthog}
Let $\cv$ be a monoidal model category satisfying the monoid axiom, and $\ck$ a fibrant $\cv$-category. Then homotopy orthogonality classes in $\ck$ are closed under any existing weighted homotopy limits.
\end{propo}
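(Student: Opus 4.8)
The plan is to unwind the natural transformation defining the homotopy limit and to exploit its naturality in the representing variable. Suppose $S\colon\cd\to\ck$ takes values in a homotopy orthogonality class $\cf$-$\Inj$, that $G\colon\cd\to\cv$ is a cofibrant weight, and that the homotopy limit $L=\{G,S\}_h$ exists; by definition it comes equipped with a natural transformation $\beta\colon\ck(-,L)\to[\cd,\cv](G,\ck(-,S))$ whose components are weak equivalences. I must show $L\in\cf$-$\Inj$, that is, that $\ck(f,L)$ is a weak equivalence in $\cv$ for each $f\colon A\to B$ in $\cf$.

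First I would write down the naturality square of $\beta$ at $f$:
\[
\xymatrix{
\ck(B,L) \ar[r]^-{\beta_B} \ar[d]_{\ck(f,L)} & [\cd,\cv](G,\ck(B,S)) \ar[d]^{[\cd,\cv](G,\ck(f,S))} \\
\ck(A,L) \ar[r]_-{\beta_A} & [\cd,\cv](G,\ck(A,S))
}
\]
Since $\beta_A$ and $\beta_B$ are weak equivalences, the two-out-of-three property reduces the claim to showing that the right-hand vertical map $[\cd,\cv](G,\ck(f,S))$ is a weak equivalence in $\cv$.

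Next I would analyse $\ck(f,S)\colon\ck(B,S)\to\ck(A,S)$ as a morphism of $[\cd,\cv]$. Because each object $SD$ lies in $\cf$-$\Inj$, every component $\ck(f,SD)$ is a weak equivalence, so $\ck(f,S)$ is a pointwise, hence projective, weak equivalence; and because $\ck$ is fibrant, each $\ck(A,SD)$ and $\ck(B,SD)$ is fibrant in $\cv$, so $\ck(A,S)$ and $\ck(B,S)$ are fibrant in the projective model structure on $[\cd,\cv]$. Thus $\ck(f,S)$ is a weak equivalence between fibrant objects.

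The hard part --- really the only nontrivial ingredient --- is then to invoke the fact that in a model $\cv$-category the $\cv$-valued hom $[\cd,\cv](G,-)$ out of a cofibrant $G$ preserves weak equivalences between fibrant objects. This follows from the two-variable adjunction associated to the pushout--product axiom: for cofibrant $G$ the functor $[\cd,\cv](G,-)$ preserves fibrations and trivial fibrations, hence is right Quillen, and Ken Brown's lemma then gives preservation of weak equivalences between fibrant objects. Applying this to $\ck(f,S)$ shows $[\cd,\cv](G,\ck(f,S))$ is a weak equivalence, whence $\ck(f,L)$ is one and $L\in\cf$-$\Inj$. The monoid axiom is used only to ensure that $[\cd,\cv]$ carries the projective model $\cv$-structure in the first place; the remainder is a direct diagram chase.
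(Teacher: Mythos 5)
Your proposal is correct and follows essentially the same argument as the paper: both use the naturality square of $\beta$ at $f$ together with two-out-of-three to reduce the claim to showing that $[\cd,\cv](G,\ck(f,S))$ is a weak equivalence, and both conclude this from $G$ being cofibrant and $\ck(f,S)$ being a pointwise weak equivalence between fibrant objects. Your appeal to the pushout--product axiom and Ken Brown's lemma simply makes explicit the justification that the paper leaves implicit in the phrase ``so $[\cd,\cv](G,\ck(f,D))$ is also a weak equivalence.''
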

\begin{proof}
It suffices to show that objects homotopy orthogonal to a single morphism $f\colon A\to B$ are closed under existing weighted homotopy limits. Let $S\colon\cd\to\ck$ 
be a diagram with each $SD$ homotopy orthogonal to $f$,  let $G\colon\cd\to\cv$ be a cofibrant weight, and suppose that the homotopy limit $\{G,S\}_h$ 
exists in $\ck$. Then we have a commutative diagram
$$
\xymatrix @C5pc {
\ck(B,\{G,S\}_h) \ar[r]^-{\ck(f,\{G,S\}_h)} \ar[d] &
\ck(A,\{G,S\}_h) \ar[d] \\
[\cd,\cv](G,\ck(B,S)) \ar[r]_-{[\cd,\cv](G,\ck(f,D))} & [\cd,\cv](G,\ck(A,S)) }
$$
in which the vertical maps are weak equivalences, by definition of the homotopy limits. Now $G$ is cofibrant, and $\ck(f,D)$ is a (pointwise)
weak equivalence between fibrant objects, so $[\cd,\cv](G,\ck(f,D))$ is also a weak equivalence. It follows that $\ck(f,\{G,D\}_h)$ is a weak equivalence 
and so that $\{G,D\}_h$ is homotopy orthogonal to $f$. 
\end{proof}

\begin{defi}\label{defn:hty-replete}
Let $\ck$ be a fibrant $\cv$-category. A full subcategory $\ca$ of $\ck$ is called {\em homotopy replete} if an object $D$ in $\ck$
lies in $\ca$ whenever there is a homotopy equivalence $h\colon C\to D$ with $C$ in $\ca$. 
\end{defi}

\begin{defi}\label{defn:accessibly-embedded}
Let $\ck$ be a full subcategory of a category $\cm$. We say that a full subcategory $\ca$ of $\ck$ is $\cm$-{\em accessibly embedded} 
in $\ck$ when there exists a regular cardinal $\lambda$ such that $\ca$ is closed in $\ck$ under all $\lambda$-filtered colimits which 
exist in $\ck$ and are preserved by the inclusion into $\cm$. In this case, we say that $\ck$ is $(\cm,\lambda)$-{\em accessibly embedded} in $\ck$.
\end{defi}

\begin{rem}\label{rmk:fibrants-accessible}
Given a combinatorial model category $\cm$, the full subcategory $\cm\fib$ consisting of fibrant objects is accessibly embedded in $\cm$ by Remark~\ref{rmk:combinatorial}.
In general, this is true neither for the full subcategory $\cm\cof$ nor for $\Int\cm$. Indeed we do not even know that the inclusion of $\Int\cm$  in $\cm$ preserves all existing $\lambda$-filtered colimits for some regular cardinal $\lambda$, although this would be true under Vop\v enka's 
principle. However, there is a regular cardinal $\lambda$ such that each object $X$ in $\Int\cm$ is a $\lambda$-filtered colimit 
$(\delta_d\colon K_d\to X)_{d\in\cd}$ in $\cm$ of objects $K_d$ which are $\lambda$-presentable in $\cm$ and belong to $\Int\cm$.  We now explain why this is the case.

In fact by \cite{D} (or \cite{R}) there is a regular cardinal $\lambda$ such that $\cm$ is locally $\lambda$-presentable, $\cm\fib$ is $\lambda$-accessible, and its inclusion in $\cm$ preserves $\lambda$-filtered colimits and $\lambda$-presentable objects; and, moreover, 
the cofibrant replacement functor $Q\colon\cm\to\cm$ preserves $\lambda$-filtered colimits and $\lambda$-presentable objects. Now, consider 
$X$ in $\Int\cm$ and take its cofibrant replacement $q\colon QX\to X$. Such an  $X$ is a $\lambda$-filtered colimit 
$(\delta_j\colon X_j\to X)_{j\in\cj}$ of objects $X_j\in\cm\fib$ which are $\lambda$-presentable in \cm, and now $QX$ is a $\lambda$-filtered
colimit $(Q\delta_j\colon QX_j\to QX)_{j\in\cj}$ of $\lambda$-presentable objects $QX_j\in\cm$ belonging to $\Int\cm$. Since
$X$ is cofibrant, it is a retract of $QX$. By  the proof of \cite[2.3.11]{MP}, $X$ is a $\lambda$-filtered colimit of objects $QX_j$.

If the object $K\in\Int\cm$ is $\lambda$-presentable in \cm, then any morphism $f\colon K\to X$ factorizes through some $\delta_j$. Now $X$ is a canonical 
$\lambda$-filtered colimit in $\cm$ for the diagram consisting of all $f\colon K\to X$ where $K\in\Int\cm$ is $\lambda$-presentable in $\cm$. But we do not know that 
the objects $K$ are $\lambda$-presentable in $\Int\cm$  because we do not know that $\Int\cm$ is closed in \cm under $\lambda$-filtered colimits (or even that $\Int\cm$ has such colimits). 
\end{rem}

\begin{theo}\label{thm:orthogonality-class}
Let $\cv$ be a tractable monoidal model category and $\cm$ a tractable left proper model $\cv$-category. Then a full subcategory $\ca$ 
of $\Int\cm$ is a small homotopy orthogonality class if and only if it is $\cm$-accessibly embedded, homotopy reflective, and homotopy replete.  
\end{theo}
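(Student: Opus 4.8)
The statement is the enriched, homotopical analogue of the characterisation of small-orthogonality classes in locally presentable categories (\cite[6.24]{AR} and the surrounding discussion), so the plan is to prove the two implications separately, after first fixing a regular cardinal $\lambda$ large enough that three things hold at once: $\ca$ is closed in $\Int\cm$ under the $\lambda$-filtered colimits described in Definition~\ref{defn:accessibly-embedded}; the conclusion of Remark~\ref{rmk:fibrants-accessible} holds, so every object of $\Int\cm$ is a $\lambda$-filtered colimit in $\cm$ of objects of $\Int\cm$ that are $\lambda$-presentable in $\cm$; and the localised model structures arising below are $\lambda$-combinatorial.

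For the ``only if'' direction I would suppose $\ca=\cf$-$\Inj$ for a set $\cf$. Homotopy repleteness is immediate from Lemma~\ref{lemma:hty-orthog-invariance}, since homotopy orthogonality to a morphism is invariant under homotopy equivalence of the testing object. Homotopy reflectivity is exactly Theorem~\ref{thm:orthog-refl}. For $\cm$-accessible embedding I would invoke Remark~\ref{rmk:orthog-and-localization}: $\ca$ is $\Int\cn$ for the $\cf$-localised model $\cv$-category $\cn$, which may be taken $\lambda$-combinatorial. An object $K\in\Int\cm$ lies in $\ca$ precisely when it is fibrant in $\cn$, and the $\cn$-fibrant objects are closed under $\lambda$-filtered colimits by Remark~\ref{rmk:combinatorial}; hence any $\lambda$-filtered colimit in $\cm$ of objects of $\ca$ that happens to land in $\Int\cm$ again lies in $\ca$, which is exactly the required closure.

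The substance is the ``if'' direction. Let $\cg$ be a representative set of the objects of $\Int\cm$ that are $\lambda$-presentable in $\cm$, choose for each $K\in\cg$ a homotopy reflection $\eta_K\colon K\to K^{\ast}$ with $K^{\ast}\in\ca$ (Definition~\ref{defn:hty-refl}), and set $\cf=\{\eta_K\mid K\in\cg\}$. The inclusion $\ca\subseteq\cf$-$\Inj$ is immediate, since every object of $\ca$ is homotopy orthogonal to each reflection map. For the reverse inclusion I would take $M\in\cf$-$\Inj$; by the choice of $\lambda$ together with Theorem~\ref{thm:hocolim-colim}, $M$ is the $\lambda$-filtered homotopy colimit $\hocolim_{d}K_d$ in $\Int\cm$ of a diagram with each $K_d\in\cg$. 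I then form $N:=\hocolim_{d}K_d^{\ast}$; since each $K_d^{\ast}\in\ca$ and the diagram is $\lambda$-filtered with values in $\Int\cm$, Theorem~\ref{thm:hocolim-colim} identifies $N$ with the ordinary colimit in $\cm$, which lies in $\Int\cm$ and hence, by $\cm$-accessible embedding, in $\ca$. Assuming the reflection maps assemble into a comparison $\theta\colon M\to N$ (see below), I claim $\theta$ is a homotopy equivalence. Applying the contravariant representables, which carry homotopy colimits in $\Int\cm$ to homotopy limits in $\Int\cv$ (Lemma~\ref{lemma:reps-cts} applied to $\Int\cm\op$), both $\Int\cm(\theta,M)$ and $\Int\cm(\theta,N)$ are computed as homotopy limits over $d$ of the maps $\Int\cm(\eta_{K_d},M)$ and $\Int\cm(\eta_{K_d},N)$. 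Each such map is a weak equivalence: the first because $M\in\cf$-$\Inj$, the second because $N\in\ca$ is homotopy orthogonal to every reflection map. As a homotopy limit of weak equivalences is a weak equivalence (the dual of Lemma~\ref{lemma:invariance}), $\Int\cm(\theta,C)$ is a weak equivalence for $C\in\{M,N\}$, whence $\theta$ is a homotopy equivalence by Proposition~\ref{prop:hty-equiv}. Since $N\in\ca$ and $\ca$ is homotopy replete, it follows that $M\in\ca$.

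The main obstacle is the construction of the comparison $\theta\colon M\to N$ and the identification of $\Int\cm(\theta,-)$ with the homotopy limit of the maps $\Int\cm(\eta_{K_d},-)$: this requires the chosen homotopy reflections to be compatible with the transition maps of the diagram $(K_d)$, that is, a sufficiently functorial (homotopy-coherent) homotopy reflector that moreover preserves $\lambda$-filtered homotopy colimits. This is the homotopical replacement for the classical fact that the reflector, being a left adjoint, automatically preserves colimits. I would address it either by upgrading the pointwise reflection to a functorial one --- for instance by realising the reflection as fibrant replacement in an auxiliary localised model $\cv$-category built from $\cf$, via Theorem~\ref{thm:orthog-refl} and Remark~\ref{rmk:orthog-and-localization} --- or by working throughout in the homotopy category of the relevant diagram $\cv$-category, so that only coherence up to homotopy is needed. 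In either case Proposition~\ref{prop:hcolimit-uniqueness} and Proposition~\ref{prop:functoriality} ensure that the homotopy colimit $N$ and the map $\theta$ are determined up to homotopy, which is all the argument requires.
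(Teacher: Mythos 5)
Your ``only if'' direction is correct and essentially equivalent to the paper's: repleteness from Lemma~\ref{lemma:hty-orthog-invariance}, reflectivity from Theorem~\ref{thm:orthog-refl}, and accessible embedding either via horn-injectivity (Proposition~\ref{prop:Fhorn}, as the paper does) or via $\ca=\Int\cn$ for the localized structure (Remark~\ref{rmk:orthog-and-localization}, as you do); both work.

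The ``if'' direction, however, has a genuine gap, and it is not only the functoriality problem you flag. Your key step asserts that $N:=\hocolim_d K_d^{\ast}$ ``is identified by Theorem~\ref{thm:hocolim-colim} with the ordinary colimit in $\cm$, which lies in $\Int\cm$ and hence, by $\cm$-accessible embedding, in $\ca$.'' Theorem~\ref{thm:hocolim-colim} only gives a \emph{weak equivalence} $\hocolim\to\colim$; the ordinary $\lambda$-filtered colimit of a diagram in $\Int\cm$ is fibrant (Remark~\ref{rmk:combinatorial}) but there is no reason it should be \emph{cofibrant} --- the paper warns about exactly this in Remark~\ref{rmk:fibrants-accessible} (``we do not know that $\Int\cm$ is closed in $\cm$ under $\lambda$-filtered colimits''). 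So the colimit need not lie in $\Int\cm$, Definition~\ref{defn:accessibly-embedded} does not apply to it, and you cannot conclude $N\in\ca$; nor does homotopy repleteness help, since the comparison is only a weak equivalence to an object outside $\Int\cm$. Separately, your proposed repair of the functoriality problem is circular as stated: localizing at $\cf$ via Theorem~\ref{thm:orthog-refl}/Remark~\ref{rmk:orthog-and-localization} produces a functorial reflection into $\cf$-$\Inj$, not into $\ca$, and $\cf$-$\Inj=\ca$ is precisely what is being proved. (Both defects can in fact be repaired: one shows $R'K\simeq K^{\ast}$ for $K$ in your generating set, hence $R'K\in\ca$ by repleteness, where $R'$ is fibrant replacement in the $\cf$-localized structure; and, choosing $\lambda$ so that $R'$ preserves $\lambda$-filtered colimits, one gets $\colim_d R'K_d\cong R'M$, which \emph{is} cofibrant because $M\to R'M$ is a cofibration with cofibrant domain. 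But none of this is in your proposal, and it is the heart of the matter.)

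The paper avoids both obstacles by never forming a new colimit at all. Given $X\in\cf$-$\Inj$, it exhibits the \emph{given} object $X$ (already in $\Int\cm$) as a $\lambda$-filtered colimit in $\cm$ of $\lambda$-presentable objects of $\ca$: each canonical map $f\colon K\to X$ with $K\in\Int\cm$ $\lambda$-presentable is lifted through the cofibration part $g_K\colon K\to\tilde K$ of the reflection (here $\cm(g_K,X)$ is a trivial fibration since $X$ is $\cf$-injective), and then $g_K$ is factored through a $\lambda$-presentable object $K'\in\ca$ using $\lambda$-presentability of $K$. Since the resulting diagram of objects of $\ca$ has colimit the given $X\in\Int\cm$, the $\cm$-accessible embedding hypothesis applies verbatim, with no functorial reflector and no cofibrancy question ever arising. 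You should restructure your argument along these lines, or else supply the missing localization argument sketched above.
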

\begin{proof}
Let $\ca$ be a small homotopy orthogonality class in $\Int\cm$. Then $\ca$ is homotopy reflective by Theorem~\ref{thm:orthog-refl} and homotopy replete by Lemma~\ref{lemma:hty-orthog-invariance}. Moreover by Proposition~\ref{prop:Fhorn} there is a set $\cg$ of morphisms in $\cm$ such that $\ca$ consists of those objects of $\Int\cm$ which are injective in $\cm_0$ with respect to \cg. It follows that \ca is \cm-accessibly embedded in $\Int\cm$.

Conversely, assume that $\ca$ is $\cm$-accessibly embedded, homotopy reflective, and homotopy replete. We shall show that $\ca=\ch$-$\Inj$ 
where $\ch$ consists of all homotopy reflections $\eta_K\colon K\to K^\ast$ with $K$ in $\Int\cm$. Clearly $\ca\subseteq\ch$-$\Inj$. Consider $K$ in $\ch$-$\Inj$. 
Then both $\cm(\eta_K,K)$ and $\cm(\eta_K,K^\ast)$ are weak equivalences, and so by Proposition~\ref{prop:hty-equiv}, also $\eta_K$ is a homotopy equivalence. 
Since $\ca$ is homotopy replete, $K$ belongs to $\ca$.  This proves that $\ca=\ch$-$\Inj$, and if $\ch$ were small, the proof would be complete. 

For each $K\in\Int\cm$, factorize $\eta_K$ as
$$\xymatrix{
K \ar[r]^{g_K} & \tilde{K} \ar[r]^{h_K} & K^* }$$
where $g_K$ is a cofibration and $h_K$ a trivial fibration. Now let $\tilde{\ch}$ consist of the morphisms $g_K\colon K\to\tilde{K}$ (still with  $K\in\Int\cm$). By Lemma~\ref{lemma:hty-orthog-comp}, $\ch$-$\Inj=\tilde{\ch}$-$\Inj$. There is a regular cardinal $\lambda$ 
having the property from Remark~\ref{rmk:fibrants-accessible} and such that $\ca_0$ is $(\cm,\lambda)$-accessibly embedded in $(\Int\cm)_0$. When we speak of an object being $\lambda$-presentable, we shall always mean $\lambda$-presentable in \cm. Let $\cf$ consist of all those cofibrations 
$g_K$ for which $K$ is $\lambda$-presentable. We have $\ca\subseteq\cf$-$\Inj$. Since $\cf$ is a set, it suffices to prove the converse inclusion. 

Assume that $X\in\Int\cm$ belongs to $\cf$-$\Inj$. By  Remark~\ref{rmk:fibrants-accessible}, we know that $X$ is the colimit of the $\lambda$-filtered diagram consisting of all $f\colon K\to X$ with $K\in\Int\cm$ $\lambda$-presentable in \cm. Since $g_K\colon K\to \tilde{K}$ is a cofibration, $\cm(g_K,X)\colon\cm(\tilde{K},X)\to\cm(K,X)$ is a fibration (in $\cv$), and so we can factorize $f$ as $hg_K$, where $h\colon \tilde{K}\to X$.
Since $K$ is $\lambda$-presentable and $\tilde{K}$ is a $\lambda$-filtered colimit of objects of \ca which are $\lambda$-presentable, there is a factorization 
$$\xymatrix{ K \ar[r]^{s} & K' \ar[r]^{t} & \tilde{K} }$$
of $g_K$ for a   $\lambda$-presentable $K'\in\ca$. Now $f=hg_k=hts$, and so $X$ is the ($\lambda$-filtered) colimit of the diagram consisting of the $ht\colon K'\to X$; since the $K'$ are in \ca, so too is $X$.
\end{proof}

\begin{rem}\label{rmk:orthog-class-lambda}
In the situation of Theorem~\ref{thm:orthogonality-class}, let $\ca$ be a small homotopy orthogonality class in $\Int\cm$. By Remark~\ref{rmk:fibrants-accessible}, there is 
a regular cardinal $\lambda$ such that each object $X$ in $\ca$ is a canonical $\lambda$-filtered colimit in $\cm$ of objects belonging
to $\Int\cm$ and $\lambda$-presentable in $\cm$. We can assume that the (cofibration, trivial fibration) factorization in $\cm$ preserves
$\lambda$-filtered colimits and $\lambda$-presentable objects. Then each morphism $f\colon K\to X$ with $K$ in $\Int\cm$ and $\lambda$-presentable 
in $\cm$ factorizes as
$$\xymatrix{ K \ar[r]^{g} & \tilde{K} \ar[r]^{h} & X }$$
where $g$ is a cofibration, $h$ is a trivial fibration, and $\tilde{K}$ is $\lambda$-presentable in $\cm$. Since $\ca$ is homotopy replete,
$\tilde{K}$ belongs to $\ca$; thus $X$ is a canonical $\lambda$-filtered colimit in $\cm$ of objects belonging to $\ca$ and $\lambda$-presentable 
in $\cm$. 
\end{rem}
 
\begin{coro}\label{cor5.7}
Let $\cv$ be a tractable monoidal model category, $\cm$ a tractable left proper model $\cv$-category, and $\ca$ a small homotopy
ortho\-go\-na\-li\-ty class in $\Int\cm$. Then there is a regular cardinal $\lambda$ such that $\ca$ is closed under homotopy $\lambda$-filtered 
colimits in $\Int\cm$.
\end{coro}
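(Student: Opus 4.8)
The plan is to exploit the description of $\ca$ as $\cf\text{-}\Inj$ for a \emph{set} $\cf$ of morphisms in $\Int\cm$ (the defining property of a small homotopy orthogonality class), and to compare the homotopy $\lambda$-filtered colimit with the ordinary conical colimit by means of Theorem~\ref{thm:hocolim-colim}. Concretely, I would first fix a regular cardinal $\lambda$ with two features: $\cv$ and $\cm$ are both $\lambda$-combinatorial (available since tractable implies combinatorial), and every domain and codomain of a morphism in $\cf$ is $\lambda$-presentable in $\cm$, in the enriched sense that $\cm(A,-)$ preserves $\lambda$-filtered conical colimits. Given any $\lambda$-filtered ordinary category $\cj$ and a diagram $S\colon\overline\cj\to\ca\subseteq\Int\cm$, I construct the homotopy colimit as $\hocolim S=R(K*S)$, where $q\colon K\to\Delta I$ is the cofibrant replacement of the weight for conical colimits; this object lies in $\Int\cm$, and by Theorem~\ref{thm:hocolim-colim} the comparison $k\colon\hocolim S\to\colim S$ into the conical colimit $\colim S=\Delta I*S$ is a weak equivalence, with $\colim S$ fibrant by Remark~\ref{rmk:combinatorial}.

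The heart of the proof is a two-step orthogonality transfer. First I would show that $\colim S$ is homotopy orthogonal to each $f\colon A\to B$ in $\cf$: since $A$ and $B$ are $\lambda$-presentable, $\cm(f,\colim S)$ is the $\lambda$-filtered colimit of the maps $\cm(f,S_j)\colon\cm(B,S_j)\to\cm(A,S_j)$, each of which is a weak equivalence because $S_j\in\ca=\cf\text{-}\Inj$ is orthogonal to $f$; as $\cv$ is $\lambda$-combinatorial, a $\lambda$-filtered colimit of weak equivalences is a weak equivalence (Remark~\ref{rmk:combinatorial}), so $\cm(f,\colim S)$ is one. Second, I would transfer this property along $k$: because $A$ and $B$ are cofibrant and $k$ is a weak equivalence between fibrant objects, $\cm(A,k)$ and $\cm(B,k)$ are weak equivalences, and hence in the evident naturality square $\cm(f,\hocolim S)$ is a weak equivalence if and only if $\cm(f,\colim S)$ is. Thus $\hocolim S$ is homotopy orthogonal to every $f\in\cf$, and since $\hocolim S\in\Int\cm$ this places $\hocolim S$ in $\cf\text{-}\Inj=\ca$. (Any other model of the homotopy colimit is homotopy equivalent to this one by Proposition~\ref{prop:hcolimit-uniqueness}, and homotopy orthogonality classes are invariant under homotopy equivalence by Lemma~\ref{lemma:hty-orthog-invariance}, so the conclusion does not depend on the chosen construction.)

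The step I expect to be the main obstacle is precisely the passage between $\colim S$ and $\Int\cm$. The conical colimit $\colim S$ is fibrant but in general \emph{not} cofibrant (Remark~\ref{rmk:fibrants-accessible}), so it need not lie in $\Int\cm$; consequently one cannot test membership in $\ca$ directly on $\colim S$, nor simply invoke homotopy repleteness of $\ca$ to move from $\colim S$ to $\hocolim S$. The device that removes this difficulty is the elementary but crucial observation that homotopy orthogonality to a morphism whose domain and codomain are cofibrant is invariant under weak equivalences between fibrant objects — this is exactly where the hypotheses $\dom f,\cod f\in\Int\cm$ (giving cofibrancy) and the fibrancy of both $\hocolim S$ and $\colim S$ (letting $k$ act as an equivalence of fibrant objects) are used together. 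Once this invariance is in hand, the filtered-colimit computation in $\cm$ and the comparison of Theorem~\ref{thm:hocolim-colim} combine to give the result.
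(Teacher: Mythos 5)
Your proof is correct, but it implements both key steps differently from the paper, even though the shared skeleton is the same: compare $\hocolim S$ with $\colim S$ via the weak equivalence $k$ of Theorem~\ref{thm:hocolim-colim}. For the closure of orthogonality under the ordinary filtered colimit, the paper never touches enriched presentability: it invokes Proposition~\ref{prop:Fhorn} to rewrite $\cf$-$\Inj$ as an ordinary injectivity class in $\cm_0$ (injectivity with respect to the $\cf$-horns together with a generating set of trivial cofibrations), and then quotes the classical fact that injectivity with respect to a set of morphisms is closed under $\lambda$-filtered colimits for suitable $\lambda$. You instead choose $\lambda$ so that the domains and codomains of the morphisms in $\cf$ are enriched $\lambda$-presentable, compute $\cm(f,\colim S)$ as a $\lambda$-filtered colimit of the weak equivalences $\cm(f,S_j)$, and conclude by Remark~\ref{rmk:combinatorial}; this works, but note that the existence of such a $\lambda$ rests on the fact---from Kelly's theory, using that $\cv$ is locally presentable as a closed category---that every object of a locally presentable $\cv$-category is enriched $\mu$-presentable for some $\mu$, an input the paper's unenriched injectivity argument avoids. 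For the transfer from $\colim S$ to $\hocolim S$, the paper passes through the cofibrant replacement $Q(\colim S)$, shows it lies in $\cf$-$\Inj$, lifts $k$ through the trivial fibration $q$ to obtain a homotopy equivalence $\hocolim S\to Q(\colim S)$, and then applies Lemma~\ref{lemma:hty-orthog-invariance}; your direct two-out-of-three argument in the naturality square, using Ken Brown's lemma to see that $\cm(A,k)$ and $\cm(B,k)$ are weak equivalences, is more economical and avoids the replacement entirely (it in effect re-proves the invariance lemma with ``homotopy equivalence'' weakened to ``weak equivalence between fibrant objects''). What the paper's route buys is reuse of lemmas already established and a cardinal argument that stays within ordinary accessible-category theory; what yours buys is a shorter, more self-contained computation, at the price of the enriched presentability fact, which you should cite or prove explicitly.
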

\begin{proof}
Let $\cj$ be a generating set of trivial cofibrations in $\cm$.  By Proposition~\ref{prop:Fhorn}, the objects of $\cf$-$\Inj$ are precisly those of $\Hor(\cf)\cap\cj$-$\Inj$. Thus 
$\cf$-$\Inj$ is closed in $\cm$ under 
under $\lambda$-filtered colimits for some regular cardinal $\lambda$. 
Let $q\colon Q(\colim D)\to\colim D$ be a cofibrant replacement. Since $q$ is a trivial fibration and the domains and codomains of morphisms from $\cf$ 
are cofibrant (because they belong to $\Int\cm$), the same argument as in the proof of Lemma~\ref{lemma:hty-orthog-invariance} yields that $Q(\colim D)$ belongs to $\cf$-$\Inj$. 
Let $k\colon\hocolim D\to\colim D$ be a weak equivalence as guaranteed by Theorem~\ref{thm:hocolim-colim}. Since $\hocolim D$ is cofibrant, there is a lifting $f\colon\hocolim D\to Q(\colim D)$ 
with $q f=k$. Thus $f$ is a weak equivalence and, since both $\hocolim D$ and $Q(\colim D)$ belong to $\Int\cm$, also a homotopy equivalence. Hence, by Lemma~\ref{lemma:hty-orthog-invariance}, $\hocolim D$ belongs to $\cf$-$\Inj$. 
\end{proof}

\begin{defi}\label{defn:hty-acc-emb}  
A full subcategory of a fibrant $\cv$-category is called {\em homotopy accessibly embedded} when it is closed under homotopy 
$\lambda$-filtered colimits for some regular cardinal $\lambda$.
\end{defi}

\begin{coro}\label{cor:hty-orthog-characterization}
Let $\cv$ be a tractable monoidal model category and $\cm$ a tractable left proper model $\cv$-category. Then a full subcategory $\ca$ 
of $\Int\cm$ is a small homotopy orthogonality class if and only if it is homotopy accessibly embedded and homotopy reflective.  
\end{coro}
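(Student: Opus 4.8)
The plan is to deduce this corollary directly from Theorem~\ref{thm:orthogonality-class}, reconciling its three conditions (\cm-accessibly embedded, homotopy reflective, homotopy replete) with the two listed here (homotopy accessibly embedded, homotopy reflective). Homotopy reflectivity occurs in both lists, so it transfers verbatim in each direction, and the substance of the argument is to trade ``homotopy accessibly embedded'' against the pair ``\cm-accessibly embedded and homotopy replete''.

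For the forward implication, suppose \ca is a small homotopy orthogonality class. Then Theorem~\ref{thm:orthogonality-class} already provides that \ca is homotopy reflective, while Corollary~\ref{cor5.7} states precisely that \ca is closed under homotopy $\lambda$-filtered colimits in $\Int\cm$ for some regular cardinal $\lambda$; by Definition~\ref{defn:hty-acc-emb} this is exactly homotopy accessible embeddedness. So this direction is immediate from results in hand.

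For the converse, assume \ca is homotopy accessibly embedded and homotopy reflective, and verify the three hypotheses of Theorem~\ref{thm:orthogonality-class}. Homotopy reflectivity is assumed. For homotopy repleteness, given a homotopy equivalence $h\colon C\to D$ with $C\in\ca$, I would note that any category with a terminal object is $\lambda$-filtered for every regular cardinal, so $C$ is trivially a homotopy $\lambda$-filtered colimit, namely of the constant diagram on $C$ over such a category; since $D\simeq C$, Proposition~\ref{prop:hcolimit-uniqueness}(2) shows that $D$ equally serves as this homotopy colimit, and closure of \ca under homotopy $\lambda$-filtered colimits then forces $D\in\ca$. For \cm-accessible embeddedness, take a $\lambda$-filtered diagram $S$ in \ca whose conical colimit exists in $\Int\cm$ and is preserved by the inclusion into \cm, so that $\colim S$ lies in $\Int\cm$; by Theorem~\ref{thm:hocolim-colim} the comparison $k\colon\hocolim S\to\colim S$ is then a weak equivalence between objects of $\Int\cm$, hence a homotopy equivalence. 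Closure under homotopy $\lambda$-filtered colimits gives $\hocolim S\in\ca$, and repleteness transports this along $k$ to yield $\colim S\in\ca$. With all three conditions established, Theorem~\ref{thm:orthogonality-class} identifies \ca as a small homotopy orthogonality class.

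The delicate point I would be most careful about is the reading of ``closed under homotopy $\lambda$-filtered colimits'': since homotopy colimits are determined only up to homotopy equivalence, this closure must be understood as asserting that \emph{every} object realizing such a homotopy colimit lies in \ca, and it is exactly this reading that makes repleteness fall out for free and that allows the comparison $k$ of Theorem~\ref{thm:hocolim-colim} to move membership from $\hocolim S$ to $\colim S$. I would also align the various regular cardinals, enlarging $\lambda$ so that \cv and \cm are simultaneously $\lambda$-combinatorial, \ca is closed under homotopy $\lambda$-filtered colimits, and Theorem~\ref{thm:hocolim-colim} applies; this is routine, since all the relevant closure and combinatoriality properties persist on passing to larger cardinals.
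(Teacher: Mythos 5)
Your proposal is correct and follows essentially the same route as the paper: necessity via Theorem~\ref{thm:orthogonality-class} and Corollary~\ref{cor5.7}, and sufficiency by deducing homotopy repleteness and $\cm$-accessible embeddedness from homotopy accessible embeddedness and then invoking Theorem~\ref{thm:orthogonality-class}. The only difference is that you spell out the two steps the paper leaves implicit (the constant-diagram argument for repleteness and the use of Theorem~\ref{thm:hocolim-colim} to transfer closure from homotopy $\lambda$-filtered colimits to genuine ones), and these details are exactly right.
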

\begin{proof}
The necessity follows from Theorem~\ref{thm:orthogonality-class} and Corollary~\ref{cor5.7}. Since homotopy colimits in $\Int\cm$ are determined up to homotopy equivalence,
a homotopy accessible subcategory of $\Int\cm$ is homotopy replete. Moreover, it is $\cm$-accessibly embedded, thus the sufficiency follows from Theorem~\ref{thm:orthogonality-class}.
\end{proof}

\section{Homotopy locally presentable categories}\label{sect:hlp}

{\em We suppose for the remainder of the paper that \cv is locally present\-able and that every object is cofibrant}. 

Recall \cite{K} that \cv is locally  $\lambda$-presentable as a closed category when it is locally $\lambda$-presentable, and the full subcategory of $\lambda$-presentable objects is closed under tensoring and contains the unit. This assumption allows a good theory of \cv-enriched locally $\lambda$-presentable categories: see \cite{K}. Any symmetric monoidal closed category \cv which is locally presentable as a category is locally $\lambda$-presentable as a closed category for some $\lambda$, and then also for all larger values of $\lambda$: see \cite{vcat}.

\begin{defi}\label{def:lambda-small}
Suppose that \cv is locally $\lambda$-presentable as a closed category. A \cv-category \cx is said to be {\em $\lambda$-small} when it has fewer than $\lambda$ objects and when each hom-object $\cx(x,y)$ is $\lambda$-presentable in \cv.
\end{defi}

\begin{exam}\label{ex:lambda-small}
Suppose that \cj is an ordinary category which is $\lambda$-small in the usual sense that it has fewer than $\lambda$-morphisms. Since $I$ is $\lambda$-presentable, $\cv_0(I,-)\colon\cv_0\to\Set$ preserves $\lambda$-filtered colimits, and thus its left adjoint preserves $\lambda$-presentable objects. Hence each hom-object of $\overline{\cj}$ is $\lambda$-presentable in \cv, and so $\overline{\cj}$ is $\lambda$-small as a \cv-category. 
\end{exam}

\begin{propo}\label{prop:lambda-small-weight}
Suppose that \cv is locally $\lambda$-presentable as a closed category.  If \cd is a $\lambda$-small \cv-category, a weight $G\colon\cd\to\cv$ is $\lambda$-presentable in $[\cd,\cv]$ if and only if $GD$ is $\lambda$-presentable in \cv for all $D\in\cd$. Such a weight $G$ is called {\em $\lambda$-small}.
\end{propo}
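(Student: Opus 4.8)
The plan is to treat the two implications separately, in each case transporting the enriched presentability question about $[\cd,\cv]$ to an ordinary presentability question in \cv by means of the evaluation functors $\ev_D\colon[\cd,\cv]\to\cv$ and their adjoints. The standing tools will be: colimits in $[\cd,\cv]$ are computed pointwise; the $\lambda$-presentable objects of \cv are closed under tensor and contain $I$ (this is precisely the hypothesis that \cv is locally $\lambda$-presentable \emph{as a closed category}); and $\lambda$-filtered colimits commute with $\lambda$-small limits in \Set. The single substantive sub-lemma, used in both directions, is that if $A\in\cv$ is $\lambda$-presentable then the internal hom $\cv(A,-)\colon\cv\to\cv$ preserves $\lambda$-filtered colimits. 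I would prove this by testing the comparison map against the strong generator of $\lambda$-presentable objects $P$: since $\cv_0(P,\cv(A,-))\cong\cv_0(P\otimes A,-)$ and $P\otimes A$ is again $\lambda$-presentable by tensor-closure, the right-hand side preserves $\lambda$-filtered colimits, and because such $P$ jointly detect isomorphisms the comparison is invertible.

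For the forward implication I would use that $\ev_D$ is cocontinuous between locally presentable categories and hence has a right adjoint $R_D$, given pointwise by the power $R_D X(D')=\cv(\cd(D',D),X)$. Because \cd is $\lambda$-small each $\cd(D',D)$ is $\lambda$-presentable, so by the sub-lemma each $\cv(\cd(D',D),-)$ preserves $\lambda$-filtered colimits; since colimits in $[\cd,\cv]$ are pointwise, $R_D$ itself preserves $\lambda$-filtered colimits. A left adjoint whose right adjoint preserves $\lambda$-filtered colimits sends $\lambda$-presentable objects to $\lambda$-presentable objects, so $GD=\ev_D G$ is $\lambda$-presentable whenever $G$ is.

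For the converse I would compute the ordinary hom-set of the functor category as the end expressing \cv-naturality, namely the equalizer
$$[\cd,\cv]_0(G,H)=\mathrm{eq}\Bigl(\textstyle\prod_{D}\cv_0(GD,HD)\rightrightarrows\prod_{D,D'}\cv_0(\cd(D,D')\otimes GD,HD')\Bigr).$$
Given a $\lambda$-filtered colimit $H=\colim_i H_i$ (pointwise), each $GD$ is $\lambda$-presentable by hypothesis and each $\cd(D,D')\otimes GD$ is $\lambda$-presentable by tensor-closure, so every functor $\cv_0(GD,-)$ and $\cv_0(\cd(D,D')\otimes GD,-)$ preserves $\lambda$-filtered colimits. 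Since \cd has fewer than $\lambda$ objects, both indexing products are $\lambda$-small and the equalizer is a $\lambda$-small limit; pushing $\colim_i$ through the displayed diagram, using that $\lambda$-filtered colimits commute with $\lambda$-small limits in \Set, identifies $[\cd,\cv]_0(G,\colim_i H_i)$ with $\colim_i[\cd,\cv]_0(G,H_i)$, so $G$ is $\lambda$-presentable.

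The hard part is the coordination between $\lambda$-presentability and the closed structure rather than any single calculation: everything hinges on the internal-hom sub-lemma and on the closure of $\lambda$-presentables under tensor, which is exactly where the ``as a closed category'' hypothesis is indispensable (both for the power $\cv(\cd(D',D),-)$ in the forward direction and for the naturality products in the converse). By contrast, the commutation of $\lambda$-filtered colimits with $\lambda$-small limits and the pointwise computation of colimits are entirely routine, and the bound on the number of objects of \cd enters only to keep the relevant products $\lambda$-small.
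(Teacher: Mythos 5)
Your proof is correct, but it takes a genuinely different route from the paper's. The paper argues through the structure theory of presentable objects in enriched presheaf categories: for the forward direction it invokes the fact that a $\lambda$-presentable object of $[\cd,\cv]$ is a $\lambda$-small colimit of representables, so that applying the cocontinuous functor $\ev_D$ exhibits $GD$ as a $\lambda$-small colimit of hom-objects $\cd(C,D)$, which are $\lambda$-presentable by $\lambda$-smallness of $\cd$; for the converse it uses the co-Yoneda presentation of $G$ as a $\lambda$-small colimit of the copowers $GD\cdot\cd(-,D)$, each of which is $\lambda$-presentable, together with closure of $\lambda$-presentable objects under $\lambda$-small colimits. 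You avoid both of these structural inputs: your forward direction replaces ``presentable $\Rightarrow$ small colimit of representables'' (the deeper of the two facts) by the adjoint-functor argument with $R_D$, whose pointwise power formula $\cv(\cd(D',D),-)$ is handled by your internal-hom sub-lemma; and your converse replaces the coend presentation of $G$ by the dual end/equalizer formula for $[\cd,\cv]_0(G,-)$, reducing everything to commutation of $\lambda$-filtered colimits with $\lambda$-small limits in $\Set$. The two converse arguments are essentially dual to one another (coequalizer of copowers versus equalizer of homs), but your version is more self-contained and elementary, at the cost of being longer; the paper's is shorter because it leans on the established theory of \cite{K}. One small point worth noting: as written, your converse shows that the \emph{ordinary} hom-functor $[\cd,\cv]_0(G,-)$ preserves $\lambda$-filtered colimits, whereas Kelly's enriched notion of $\lambda$-presentability is phrased via the enriched hom $[\cd,\cv](G,-)\colon[\cd,\cv]\to\cv$; this is not a gap, since your equalizer argument enriches verbatim (replace $\cv_0$ by the internal hom, use your sub-lemma, and use that $\lambda$-small limits commute with $\lambda$-filtered colimits in the locally $\lambda$-presentable category $\cv$), or alternatively the two notions agree under the standing hypothesis that $\cv$ is locally $\lambda$-presentable as a closed category --- a point the paper's own proof also treats as background.
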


\proof
If $G$ is $\lambda$-presentable, then it is a $\lambda$-small colimit of representables. Since the evaluation functors are cocontinuous, $GD$ is a $\lambda$-small colimit of hom-objects $\cd(C,D)$; but these are all $\lambda$-presentable in \cv, hence so too is $GD$.

Suppose conversely that each $GD$ is $\lambda$-presentable. Each $\cd(-,D)$ is $\lambda$-presentable (in fact small-projective) and so each $GD\cdot \cd(-,D)$ is $\lambda$-presentable. But $G$ itself is a $\lambda$-small colimit of the $GD\cdot \cd(-,D)$, since $\cd$ is $\lambda$-small, thus $G$ is also $\lambda$-presentable. 
\endproof

\begin{propo}\label{prop:filtered-finite-decomposition}
Let $\cv$ be a combinatorial monoidal model category satisfying the monoid axiom, $\cm$ a combinatorial model $\cv$-category, and \cd a small $\cv$-category. Then there is a regular cardinal $\lambda$ such that, in $\Int\cm$, any weighted homotopy colimit over \cd is a homotopy $\lambda$-filtered colimit of $\lambda$-small weighted homotopy colimits. 
\end{propo}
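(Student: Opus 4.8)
The plan is to reduce the statement to a decomposition of the \emph{weight}, and then transport that decomposition through the colimit construction of Theorem~\ref{thm:colimit-existence}. By Remark~\ref{rmk:non-cofibrant-weights} we may assume the weight $G\colon\cd\op\to\cv$ is cofibrant, and I would write the homotopy colimit in $\Int\cm$ as $G\ast_h S=R(G\ast S)$, with $G\ast S$ the (cofibrant) ordinary weighted colimit and $R$ fibrant replacement, exactly as in Theorem~\ref{thm:colimit-existence}.

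First I would fix a regular cardinal $\lambda$ large enough that: $\cv$ is locally $\lambda$-presentable as a closed category; the $\cv$-category $\cd$ is $\lambda$-small (possible since $\cd$ has a set of objects and each of its hom-objects is presentable in $\cv$); both $[\cd\op,\cv]$ with the projective structure and $\cm$ are $\lambda$-combinatorial; and the cofibrant and fibrant replacement functors $Q$ and $R$ on these categories preserve $\lambda$-filtered colimits as well as $\lambda$-presentable objects. The existence of such a single $\lambda$ is precisely the sort of fact recalled in Remark~\ref{rmk:combinatorial} and exploited in Remark~\ref{rmk:fibrants-accessible}.

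Next I would decompose the weight. Since $[\cd\op,\cv]$ is locally $\lambda$-presentable, $G$ is the colimit of its canonical $\lambda$-filtered diagram $(H_i)_{i\in\cj}$ of $\lambda$-presentable objects. Applying $Q$, which preserves $\lambda$-filtered colimits and $\lambda$-presentable objects, gives $QG=\colim_i QH_i$ with each $G_i:=QH_i$ both cofibrant and $\lambda$-presentable; by Proposition~\ref{prop:lambda-small-weight} (applied to $\cd\op$, which is again $\lambda$-small) each $G_i$ is therefore a $\lambda$-small cofibrant weight. Because $G$ is cofibrant, the comparison $QG\to G$ is a weak equivalence between cofibrant objects, so by Proposition~\ref{prop:hcolimit-uniqueness}(3) it suffices to decompose $QG\ast_h S$.

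Finally I would transport this through the colimit. The functor $-\ast S\colon[\cd\op,\cv]\to\cm$ is cocontinuous, so $QG\ast S=\colim_i(G_i\ast S)$ is a $\lambda$-filtered colimit of the cofibrant objects $G_i\ast S$, and each $R(G_i\ast S)$ serves as the homotopy colimit $G_i\ast_h S$ by Theorem~\ref{thm:colimit-existence}. Since $R$ preserves $\lambda$-filtered colimits, applying it yields
$$QG\ast_h S=R(QG\ast S)=\colim_i R(G_i\ast S)=\colim_i (G_i\ast_h S).$$
The diagram $i\mapsto R(G_i\ast S)$ lands in $\Int\cm$ and is indexed by the $\lambda$-filtered $\cj$, so Theorem~\ref{thm:hocolim-colim} identifies this ordinary colimit with the corresponding homotopy colimit, $\colim_i(G_i\ast_h S)\simeq\hocolim_i(G_i\ast_h S)$. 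Chaining the weak equivalences gives $G\ast_h S\simeq\hocolim_i(G_i\ast_h S)$, exhibiting $G\ast_h S$ as a homotopy $\lambda$-filtered colimit of the $\lambda$-small weighted homotopy colimits $G_i\ast_h S$. I expect the main obstacle to be the simultaneous bookkeeping in the choice of $\lambda$: one must arrange at once that $\cd$ is $\lambda$-small, that $Q$ on $[\cd\op,\cv]$ preserves both $\lambda$-filtered colimits and $\lambda$-presentability (so the $G_i$ are both cofibrant and $\lambda$-small), and that $R$ on $\cm$ preserves $\lambda$-filtered colimits (so it commutes with the colimit defining $QG\ast S$); checking that one $\lambda$ meets all these demands is the crux.
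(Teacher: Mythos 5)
Your proof is correct and takes essentially the same route as the paper's: fix $\lambda$ so that $[\cd\op,\cv]$ is $\lambda$-combinatorial with cofibrant replacement preserving $\lambda$-presentable objects, write $QG$ as a $\lambda$-filtered colimit of $\lambda$-small cofibrant weights, and transport that decomposition through the homotopy colimit construction. The only difference is that where the paper compresses the final step into the phrase ``weighted homotopy colimits are functorial in $\Int\cm$,'' you spell it out explicitly --- cocontinuity of $-\ast S$, preservation of $\lambda$-filtered colimits by $R$, and the identification of the resulting $\lambda$-filtered colimit in $\Int\cm$ with a homotopy colimit via Theorem~\ref{thm:hocolim-colim} --- which is precisely what that phrase is hiding.
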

\begin{proof}
There is a regular cardinal $\lambda$ such that $[\cd{\op},\cv]$ is $\lambda$-combinatorial with cofibrant replacement preserving $\lambda$-presentable
objects. Consider a weight $G\colon\cd\op\to\cv$ and a diagram $S\colon\cd\to\Int\cm$. Since $G$ is a $\lambda$-filtered colimit of $\lambda$-small weights, the cofibrant replacement $QG$ is a $\lambda$-filtered colimit of $\lambda$-small and cofibrant weights. Since weighted homotopy colimits are functorial
in $\Int\cm$, a homotopy colimit of $S$ weighted by $G$ is a homotopy $\lambda$-filtered colimit of $\lambda$-small weighted homotopy colimits.
\end{proof}

\begin{propo}\label{hpres}
Let \cv be a $\lambda$-combinatorial monoidal model category which is locally $\lambda$-presentable as a closed category and has all objects cofibrant, and 
let \ck be a fibrant \cv-category. Then a $\lambda$-small weighted homotopy colimit of homotopy $\lambda$-presentable objects in \ck is homotopy $\lambda$-presentable.
\end{propo}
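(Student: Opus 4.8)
The plan is to prove directly that the representable $\cv$-functor $\ck(G\ast_h S,-)\colon\ck\to\Int\cv$ preserves homotopy $\lambda$-filtered colimits, since by Definition~\ref{defn:presentable} this is exactly what it means for $G\ast_h S$ to be homotopy $\lambda$-presentable. (Here I use that all objects of $\cv$ are cofibrant, so that the fibrant objects of $\cv$ form the fibrant $\cv$-category $\Int\cv$, and representables out of the fibrant $\cv$-category $\ck$ land in $\Int\cv$.) Because homotopy colimits are defined only for cofibrant weights, I first replace the $\lambda$-small weight $G$ by a cofibrant one; choosing $\lambda$ large enough that cofibrant replacement on $[\cd\op,\cv]$ preserves $\lambda$-presentable objects (as in Remark~\ref{rmk:fibrants-accessible}), I may assume $G$ is both cofibrant and $\lambda$-presentable, i.e.\ $\lambda$-small in the sense of Proposition~\ref{prop:lambda-small-weight}.

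The key step is to factor the representable through a weighted homotopy limit. By the defining weak equivalence $\beta$ of Definition~\ref{defn:hty-colimit}, the functor $\ck(G\ast_h S,-)$ is pointwise weakly equivalent to $\Phi:=[\cd\op,\cv](G,\ck(S,-))$; since $G$ is cofibrant, $\ck$ is fibrant, and all objects of $\cv$ are cofibrant, both functors take values in $\Int\cv$ and $\beta$ is a pointwise homotopy equivalence, so by Lemma~\ref{pres1} it suffices to show that $\Phi$ preserves homotopy $\lambda$-filtered colimits. Next I write $\Phi$ (up to pointwise homotopy equivalence) as the composite $L\circ QE$, where $E:=\ck(S,-)\colon\ck\to[\cd\op,\cv]$, where $Q$ is the enriched cofibrant replacement landing in $\Int[\cd\op,\cv]$, and where $L:=[\cd\op,\cv](G,-)=\{G,-\}_h\colon\Int[\cd\op,\cv]\to\Int\cv$ is the $G$-weighted homotopy limit (which equals the ordinary weighted limit here, by Theorem~\ref{thm:colimit-existence}, since $G$ is cofibrant and the argument is fibrant). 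Indeed $QE(X)\to E(X)=\ck(S,X)$ is a weak equivalence between fibrant objects, so $L$ applied to it is a weak equivalence by Ken Brown's lemma, giving $L\circ QE\simeq\Phi$; by Lemma~\ref{pres1} again it is enough to treat $L\circ QE$.

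I would then dispatch the two factors separately and recombine them with Lemma~\ref{pres2}. First, $QE$ preserves homotopy $\lambda$-filtered colimits: this is precisely Lemma~\ref{lemma:qe}, applied with $\ca=\cd$ and $F=S$, using that each $SD$ is homotopy $\lambda$-presentable in $\ck$. Second, $L$ preserves homotopy $\lambda$-filtered colimits: a $\lambda$-filtered homotopy colimit in $\Int[\cd\op,\cv]$ is computed pointwise, the evaluation functors preserving and jointly reflecting it by the construction in Theorem~\ref{thm:colimit-existence}, so this is exactly the assertion that the $G$-weighted homotopy limit over $\cd\op$ commutes with $\lambda$-filtered homotopy colimits, which is Proposition~\ref{prop:filtered-finite} applied with $\cm=\cv$ and the $\lambda$-presentable cofibrant weight $G$. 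Given both facts, Lemma~\ref{pres2} yields that $L\circ QE$ preserves any $\lambda$-filtered homotopy colimit that $QE$ preserves, completing the argument.

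The hard part will be the correct invocation of Proposition~\ref{prop:filtered-finite} for the factor $L$: that proposition is phrased as the weak-equivalence of a specific comparison map for two-variable diagrams $\cd\ox\cc\to\cm$ taking values in $\Int\cm$, so I must verify that the $\lambda$-filtered homotopy colimit fed into $L$ is genuinely a conical one over some $\overline{\cj}$, that the combined diagram $(D,j)\mapsto\ck(SD,T_j)$ really lands in $\Int\cv$, and that ``$\{G,-\}_h$ preserves $\lambda$-filtered homotopy colimits'' is faithfully translated into the commutativity formulation there. A secondary, bookkeeping, obstacle is the reduction to a cofibrant $\lambda$-presentable weight, which is what forces $\lambda$ to be taken large enough that cofibrant replacement on $[\cd\op,\cv]$ preserves $\lambda$-presentability.
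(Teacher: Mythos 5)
Your proposal has the same skeleton as the paper's proof: reduce via Lemma~\ref{pres1} to the functor $[\cd\op,\cv](G,\ck(S,-))$, rewrite it up to pointwise homotopy equivalence as the composite of $Q\circ\ck(S,-)\colon\ck\to\Int[\cd\op,\cv]$ (using Ken Brown's lemma and the fact that $q\colon QE(X)\to E(X)$ is a weak equivalence between fibrant objects) with the weighted-limit functor $L=[\cd\op,\cv](G,-)\colon\Int[\cd\op,\cv]\to\Int\cv$, handle the first factor by Lemma~\ref{lemma:qe}, and recombine with Lemma~\ref{pres2}. The one genuine divergence is the second factor. The paper invokes Proposition~\ref{prop:preservation}, whose hypotheses are immediate here: $L$ preserves $\lambda$-filtered colimits because $G$ is $\lambda$-presentable in $[\cd\op,\cv]$ (Proposition~\ref{prop:lambda-small-weight}); it preserves weak equivalences between fibrant objects by Ken Brown's lemma since $G$ is cofibrant; and it carries $\Int[\cd\op,\cv]$ into $\Int\cv$ since all objects of \cv are cofibrant. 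Its conclusion is then literally preservation of $\lambda$-filtered homotopy colimits in the sense of Definition~\ref{defn:preservation}. You instead invoke Proposition~\ref{prop:filtered-finite} with $\cm=\cv$. This does work: a $\lambda$-filtered diagram $T\colon\overline{\cj}\to\Int[\cd\op,\cv]$ corresponds to a two-variable diagram $\cd\op\ox\overline{\cj}\to\cv$ landing in $\Int\cv$; since all objects of \cv are cofibrant, the cofibrant replacement in the commutativity zig-zag may be taken to be the identity, so the zig-zag reduces to a weak equivalence $K*(L\circ T)\to\{G,R(K*T)\}$, and extending this along the fibrant replacement of its domain yields precisely the canonical comparison $\ell$ of Remark~\ref{rmk:preservation} (the interchange map is, by its universal-property definition, compatible with the two colimit cocones). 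But you have correctly identified the cost of this route: one must carry out this identification of comparison maps, and also reconcile the pointwise fibrant replacement $R(K*T)$ of Proposition~\ref{prop:filtered-finite} with the projective fibrant replacement used to construct homotopy colimits in $\Int[\cd\op,\cv]$ (both are pointwise fibrant replacements, so $\{G,-\}$ carries the connecting weak equivalence to a weak equivalence). The paper's choice of Proposition~\ref{prop:preservation} buys exactly the avoidance of that bookkeeping, while your choice makes the conceptual content (commutation of $\lambda$-presentable homotopy limits with $\lambda$-filtered homotopy colimits) more visible. One small correction to your opening step: homotopy colimits are by definition taken with cofibrant weights (Definition~\ref{defn:hty-colimit}), so the $\lambda$-small weight in the statement is already cofibrant; your reduction via ``choosing $\lambda$ large enough'' is not only unnecessary but not available, since $\lambda$ is fixed by the hypotheses---you should simply start, as the paper does, from a $\lambda$-small cofibrant weight.
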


\proof
Let \cd be a $\lambda$-small \cv-category, and $H\colon\cd\op\to\cv$ a $\lambda$-small cofibrant weight. 
Let $S\colon \cd\to\ck$ have homotopy $\lambda$-presentable values. We must show that $H*_hS$ is homotopy $\lambda$-presentable; in other words, that $\ck(H*_h S,-)$ preserves homotopy $\lambda$-filtered colimits. 

We have a pointwise weak equivalence 
$$\ck(H*_hS,-) \to [\cd\op,\cv](H,\ck(S,-))$$
but this takes values in $\Int\cv$ and so is in fact a pointwise homotopy equivalence.  Thus, by Lemma~\ref{pres1}, it will suffice to show that 
$[\cd\op,\cv](H,\ck(S,-))$ preserves the homotopy colimit. This functor is the composite 
$$\xymatrix @C3pc {
\ck \ar[r]^-{\ck(S,-)} & [\cd\op,\cv] \ar[rr]^-{[\cd\op,\cv](H,-)} && \Int\cv }$$
which is pointwise homotopy equivalent to the composite
$$\xymatrix @C3pc {
\ck \ar[r]^-{Q\circ \ck(S,-)} & \Int[\cd\op,\cv] \ar[rr]^-{\Int[\cd\op,\cv](H,-)} && \Int\cv. }$$
The first factor $Q\circ\ck(S,-)$ preserves $\lambda$-filtered homotopy colimits by Lemma~\ref{lemma:qe}, while the second factor preserves them by Proposition~\ref{prop:preservation}. \endproof

Recall the following two characterizations of locally $\lambda$-presentable categories. Let \ck be cocomplete and $J\colon\ca\to\ck$ a full subcategory consisting of $\lambda$-presentable objects. Then \ck is locally $\lambda$-presentable if either of the following equivalent conditions hold:
\begin{itemize}
\item each object of \ck is a $\lambda$-filtered colimit of objects in \ca;
\item the induced functor $\ck(J,1)\colon\ck\to[\ca\op,\Set]$ is fully faithful (in other words, $J$ is dense).
\end{itemize}

In our homotopy-theoretic setting we shall consider analogues of both these conditions; it is no longer clear that they are equivalent. 
 
\begin{defi}\label{def6.9}
Let $\cv$ be a combinatorial monoidal model category having all objects cofibrant, and let \ck be a fibrant \cv-category with weighted homotopy colimits. 
Let $J\colon\ca\to\ck$  be a small full subcategory consisting of homotopy $\lambda$-presentable objects. 

We say that \ca exhibits \ck as {\em strongly homotopy locally $\lambda$-presentable} if every object of \ck is a homotopy $\lambda$-filtered colimit of objects in \ca. 

We say that \ca exhibits \ck as {\em homotopy locally $\lambda$-presentable} if the induced \cv-functor
$$\xymatrix{
\ck \ar[r]^-{E} & [\ca\op,\cv] \ar[r]^{Q} & [\ca\op,\cv] }$$
is locally a weak equivalence. 

We say that \ck is {\em strongly homotopy locally $\lambda$-presentable} or {\em homotopy locally $\lambda$-presentable} if there is some such \ca, and we say that \ck is {\em strongly homotopy locally presentable} or {\em homotopy locally presentable} if it is so for some $\lambda$. 
\end{defi}

\begin{propo}\label{prop:strong-weak}
  Every strongly homotopy locally $\lambda$-presentable \cv-category is homotopy locally $\lambda$-presentable.
\end{propo}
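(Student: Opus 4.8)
The plan is to show directly that the $\cv$-functor $QE\colon\ck\to\Int[\ca\op,\cv]$ is locally a weak equivalence, i.e.\ that $(QE)_{X,Y}\colon\ck(X,Y)\to\Int[\ca\op,\cv](QEX,QEY)$ is a weak equivalence in $\cv$ for every pair $X,Y\in\ck$. I would first dispose of the case $X=JA$ with $A\in\ca$, and then propagate to arbitrary $X$ by writing it as a homotopy $\lambda$-filtered colimit of objects of \ca and exploiting that all the functors in sight preserve the relevant homotopy (co)limits.

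For the base case, note that $EJ$ is the Yoneda embedding $\ca\to[\ca\op,\cv]$, so $E(JA)=\ca(-,A)$ is cofibrant, while $EX=\ck(J-,X)$ is always fibrant since \ck is fibrant; moreover the enriched Yoneda lemma makes $E_{JA,Y}\colon\ck(JA,Y)\to[\ca\op,\cv](\ca(-,A),EY)$ the canonical isomorphism. Writing $q\colon Q\Rightarrow\Id$ for the enriched natural cofibrant-replacement weak equivalence, $\cv$-naturality gives $(q_{EY})_*\circ(QE)_{JA,Y}=(q_{E(JA)})^*\circ E_{JA,Y}$. Here $q_{E(JA)}$ is a weak equivalence between cofibrant objects with fibrant target $EY$, and $q_{EY}\colon QEY\to EY$ is a weak equivalence between fibrant objects with cofibrant source $Q(E(JA))$; hence both $(q_{E(JA)})^*$ and $(q_{EY})_*$ are weak equivalences in \cv. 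Since $E_{JA,Y}$ is invertible, two-out-of-three forces $(QE)_{JA,Y}$ to be a weak equivalence, for all $A\in\ca$ and $Y\in\ck$.

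For general $X$, fix $Y$ and consider the two $\cv$-functors $\ck(-,Y)$ and $\Int[\ca\op,\cv](QE-,QEY)$ from $\ck\op$ to $\Int\cv$ (a fibrant $\cv$-category because all objects of \cv are cofibrant), together with the $\cv$-natural transformation $(QE)_{-,Y}$ between them. By hypothesis $X$ is a homotopy $\lambda$-filtered colimit of a diagram landing in \ca; by Lemma~\ref{lemma:qe}, $QE$ preserves $\lambda$-filtered homotopy colimits, so $QEX$ is the corresponding homotopy $\lambda$-filtered colimit of the $QEX_i$ in $\Int[\ca\op,\cv]$. Reading these as homotopy limits in the (fibrant) opposite $\cv$-categories and applying Lemma~\ref{lemma:reps-cts} to the covariant representables $\ck(-,Y)=(\ck\op)(Y,-)$ and $\Int[\ca\op,\cv](-,QEY)=(\Int[\ca\op,\cv])\op(QEY,-)$, both $\ck(X,Y)$ and $\Int[\ca\op,\cv](QEX,QEY)$ are exhibited as the homotopy limits in $\Int\cv$ of the diagrams $i\mapsto\ck(X_i,Y)$ and $i\mapsto\Int[\ca\op,\cv](QEX_i,QEY)$, and $(QE)_{X,Y}$ is, by naturality, the induced map of homotopy limits. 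Each component $(QE)_{X_i,Y}$ is a weak equivalence by the base case; since a pointwise weak equivalence of diagrams in $\Int\cv$ is a pointwise homotopy equivalence and hence induces a homotopy equivalence on homotopy limits (the dual of Proposition~\ref{prop:hcolimit-uniqueness}), $(QE)_{X,Y}$ is a weak equivalence. This holds for all $X$, so $QE$ is locally a weak equivalence and \ca exhibits \ck as homotopy locally $\lambda$-presentable.

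The main obstacle is the third step: one must verify carefully that the contravariant representables really do convert the given homotopy $\lambda$-filtered colimit into a homotopy limit \emph{respected} by $(QE)_{X,Y}$, and that passing to homotopy limits carries pointwise weak equivalences to weak equivalences in this enriched context. Once the preservation statement of Lemma~\ref{lemma:qe} and the continuity of representables from Lemma~\ref{lemma:reps-cts} are invoked, what remains is essentially naturality bookkeeping, but getting the identification of $(QE)_{X,Y}$ with the induced map of homotopy limits right is the delicate point.
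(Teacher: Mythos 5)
Your skeleton matches the paper's: establish the weak equivalence for $X=JA$, $A\in\ca$, via the Yoneda lemma and Ken Brown's lemma, then propagate to all of \ck using the hypothesis that every object is a homotopy $\lambda$-filtered colimit of objects of \ca together with Lemma~\ref{lemma:qe}. Your base case is correct; the paper organizes it slightly differently (it first reduces, using that post-composition with $q_{EK'}$ is a weak equivalence, to showing the one-sided composite $q^*\circ E\colon\ck(K,K')\to[\ca\op,\cv](QEK,EK')$ is a weak equivalence), but the content is the same.

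The gap is in your closure step, exactly where you flagged it, and it is not mere bookkeeping as phrased. The dual of Proposition~\ref{prop:hcolimit-uniqueness}(5) shows that homotopy limits of pointwise homotopy equivalent diagrams agree \emph{as objects}: it produces some homotopy equivalence between them, but says nothing about the particular map $(QE)_{X,Y}$. Since homotopy limits are not functorial (as the paper notes before Proposition~\ref{prop:functoriality}), there is no canonical ``induced map of homotopy limits'' with which to identify $(QE)_{X,Y}$, so the inference as written does not go through. What finishes the proof --- and is the paper's own argument --- is the following. Write $\delta\colon G\to\ck(S,X)$ for the colimiting cocone, where $G$ is the cofibrant replacement of $\Delta I$ and $S\colon\cd\to\ck$ is the $\lambda$-filtered diagram in \ca; by Lemma~\ref{lemma:qe}, $\delta':=(QE)_{S,X}\circ\delta$ exhibits $QEX$ as the homotopy colimit $G*_h QES$. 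Now consider the square whose top edge is $(QE)_{X,Y}$, whose vertical edges are the universal-property maps $\beta_Y\colon\ck(X,Y)\to[\cd\op,\cv](G,\ck(S,Y))$ and $\beta'_{QEY}\colon\Int[\ca\op,\cv](QEX,QEY)\to[\cd\op,\cv](G,\Int[\ca\op,\cv](QES,QEY))$ induced by $\delta$ and $\delta'$, and whose bottom edge is $[\cd\op,\cv](G,\psi)$, with $\psi$ the natural map whose components are the $(QE)_{SD,Y}$. This square commutes \emph{on the nose}, by $\cv$-functoriality of $QE$. The vertical maps are weak equivalences by the two universal properties; the bottom map is a weak equivalence because $G$ is cofibrant and $\psi$ is, by your base case, a pointwise weak equivalence between projectively fibrant diagrams. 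Two-out-of-three then gives that $(QE)_{X,Y}$ is a weak equivalence. So the repair is short and available, but it must \emph{replace} the appeal to Proposition~\ref{prop:hcolimit-uniqueness}, not supplement it; once made, your proof coincides with the paper's.
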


\proof 
Let $J\colon\ca\to\ck$ exhibit \ck as strongly homotopy locally $\lambda$-presentable. We must show that 
$$\xymatrix{
\ck \ar[r]^-E & [\ca\op,\cv] \ar[r]^{Q} & [\ca\op,\cv] }$$
is locally a weak equivalence; in other words that 
$$\xymatrix{
\ck(K,K') \ar[r]^-E & [\ca\op,\cv](EK,EK') \ar[r]^-{Q} & [\ca\op,\cv](QEK,QEK') }$$
is a weak equivalence for all $K,K'\in\ck$. 

Since \ck is fibrant, $E$ has fibrant values, and so in particular $EK'$ is fibrant. Thus $q\colon QEK'\to EK'$ is a weak equivalence between fibrant objects, and $QEK$ is cofibrant, thus the map $[\ca\op,\cv](QEK,QEK')\to[\ca\op,\cv](QEK,EK')$ given by composition with $qEK'$ is a weak equivalence. Thus, by naturality of $q$,  it remains to show that the composite
$$\xymatrix{
\ck(K,K') \ar[r]^-{E} & [\ca\op,\cv](EK,EK') \ar[r]^{q^*} & [\ca\op,\cv](QEK,EK') }$$
is a weak equivalence, where $q^*$ denotes the map given by composition with $q\colon QEK\to EK$. 

To do this, let \cl consist of those objects $K$ for which this composite is a weak equivalence for all $K'\in\ck$. First, observe that \cl contains the objects in \ca, for if $K=JA$, then $EK=EJA=\ca(-,A)$ is representable, and so cofibrant, and so $q^*$ is itself a weak equivalence, and we need only check that $E\colon\ck(JA,K')\to[\ca\op,\cv](EJA,EK')$ is one. But this holds by the Yoneda lemma, since $[\ca\op,\cv](EJA,EK')=[\ca\op,\cv](\ca(-,A),EK')\cong (EK')A=\ck(JA,K')$.

Thus if \cl is closed under homotopy $\lambda$-filtered colimits, then it will be all of \ck. Suppose then that $\cj$ is a $\lambda$-filtered ordinary category, and that $\cd=\overline{\cj}$ is the free \cv-category on \cj, and that $K\colon \cd\op\to\cv$ is a cofibrant replacement of $\Delta I\colon \overline{\cj}\op\to\cv$. Let $S\colon\cd\to\ck$ be the \cv-functor induced by a diagram $\cj\to\ck$, and suppose that $S$ takes values in \cl. We must show that the homotopy colimit $K*_h S$ lies in \cl. 

Consider the following diagram
$$\xymatrix{
[\cd\op,\cv](K,\ck(S-,X)) \ar[r] \ar[d]_{[\cd\op,\cv](K,E)} & \ck(K*_hS,X) \ar[d]^{E}\\
[\cd\op,\cv](K,[\ca\op,\cv](ES-,EX)) \ar[d]_{[\cd\op,\cv](K,Q)} 
& [\ca\op,\cv](E(K*_hS),EX) \ar[d]^{Q} \\ 
[\cd\op,\cv](K,[\ca\op,\cv](QES-,QEX)) \ar[r] & [\ca\op,\cv](QE(K*_hS),QEX) }$$
in which the horizontal arrows are weak equivalences induced by the universal property of the homotopy colimit $K*_hS$ and the fact that $QE$ preserves this homotopy colimit (see Lemma~\ref{lemma:qe}). For each object $D\in\cd$, we know that $SD\in\cl$, and so that the composite 
$$\xymatrix{
\ck(SD,X) \ar[r]^-{E} & [\ca\op,\cv](ESD,EX) \ar[r]^{Q} & [\ca\op,\cv](QESD,QEX) }$$
is a weak equivalence between fibrant objects; since $K$ is cofibrant, it follows 
that the vertical composite on the left of the previous diagram is a weak equivalence, and so that the vertical composite on the right is also a weak equivalence. This proves that $K*_h S$ lies in \cl. 
\endproof

\begin{propo}\label{prop:IntMisLP} 
Let $\cv$ be a combinatorial monoidal model category having all objects cofibrant and $\cm$ a combinatorial model $\cv$-category. 
Then $\Int\cm$ is strongly homotopy locally presentable, and so also homotopy locally presentable.
\end{propo}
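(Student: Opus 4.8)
The plan is to establish the stronger statement that $\Int\cm$ is \emph{strongly} homotopy locally $\lambda$-presentable for a suitable $\lambda$; the final clause of the proposition then follows at once from Proposition~\ref{prop:strong-weak}. Note first that $\Int\cm$ is a fibrant \cv-category (each hom-object $\cm(A,B)$ is fibrant since $A$ is cofibrant and $B$ fibrant) and has weighted homotopy colimits by Theorem~\ref{thm:colimit-existence}, so Definition~\ref{def6.9} applies. I would fix a regular cardinal $\lambda$ large enough that both \cv and \cm are $\lambda$-combinatorial, that the conclusion of Remark~\ref{rmk:fibrants-accessible} holds, and that the presentability compatibility discussed below is in force. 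For the witnessing subcategory I take $J\colon\ca\to\Int\cm$ to be a skeleton of the full sub-\cv-category of $\Int\cm$ on those objects which are $\lambda$-presentable in \cm; this is small because the $\lambda$-presentable objects of the locally presentable category $\cm_0$ form an essentially small class. Two things then need checking: that each object of \ca is homotopy $\lambda$-presentable in $\Int\cm$, and that every object of $\Int\cm$ is a homotopy $\lambda$-filtered colimit of objects of \ca.

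For the first point, fix $K'\in\ca$ and consider the representable \cv-functor $\cm(K',-)\colon\cm\to\cv$. I would apply Proposition~\ref{prop:preservation} with $\cn=\cv$ (a model \cv-category over itself) and $F=\cm(K',-)$. Since $K'$ is cofibrant, $F$ is a right Quillen functor, so it preserves weak equivalences between fibrant objects, and it sends $\Int\cm$ into $\Int\cv$ because $\cm(K',X)$ is fibrant whenever $X$ is, and all objects of \cv are cofibrant. Granting that $F$ preserves $\lambda$-filtered colimits, Proposition~\ref{prop:preservation} then yields that the induced $\cm(K',-)\colon\Int\cm\to\Int\cv$ preserves $\lambda$-filtered homotopy colimits, which is precisely homotopy $\lambda$-presentability of $K'$ in the sense of Definition~\ref{defn:presentable}.

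For the second point I would invoke Remark~\ref{rmk:fibrants-accessible} directly: for $\lambda$ as chosen, an arbitrary $X\in\Int\cm$ is a $\lambda$-filtered conical colimit $X=\colim_j K_j$ in \cm of objects $K_j\in\Int\cm$ that are $\lambda$-presentable in \cm, that is, of objects of \ca. Presenting this as a diagram $S\colon\overline{\cj}\to\cm$ landing in $\Int\cm$, Theorem~\ref{thm:hocolim-colim} gives that the canonical comparison $k\colon\hocolim S\to\colim S=X$ is a weak equivalence. Both $\hocolim S$ and $X$ lie in $\Int\cm$, so $k$ is in fact a homotopy equivalence there, and hence by Proposition~\ref{prop:hcolimit-uniqueness}(2) the object $X$ itself serves as the homotopy colimit of $S$. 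Thus $X$ is a homotopy $\lambda$-filtered colimit of objects of \ca, as required, and \ca exhibits $\Int\cm$ as strongly homotopy locally $\lambda$-presentable.

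The one genuinely non-formal point, and the step I expect to be the main obstacle, is the claim that $\cm(K',-)$ preserves $\lambda$-filtered colimits, that is, that $\lambda$-presentability of $K'$ in the ordinary category $\cm_0$ upgrades to enriched $\lambda$-presentability. Since conical $\lambda$-filtered colimits are computed in the underlying categories, and testing an isomorphism in \cv against the $\lambda$-presentable objects $P$ together with the copower adjunction gives $\cv_0(P,\cm(K',-))\cong\cm_0(P\cdot K',-)$, this reduces to the assertion that $P\cdot K'$ is again $\lambda$-presentable in $\cm_0$ whenever $P$ is $\lambda$-presentable in \cv and $K'$ is $\lambda$-presentable in \cm. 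Because \cv is locally $\lambda$-presentable as a closed category and \cm is a cocomplete \cv-category whose underlying category is locally presentable, this holds for all sufficiently large $\lambda$ by the enriched theory of \cite{K,vcat}; I would enlarge $\lambda$ at the outset so that the ordinary and enriched notions of $\lambda$-presentability in \cm coincide. With that compatibility in hand, everything else is assembled from the existence (Theorem~\ref{thm:colimit-existence}), comparison (Theorem~\ref{thm:hocolim-colim}), and preservation (Proposition~\ref{prop:preservation}) results already established.
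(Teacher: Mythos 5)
Your proposal is correct and follows the same overall strategy as the paper's proof: the same choice of $\ca$ (the objects of $\Int\cm$ that are $\lambda$-presentable in \cm, for $\lambda$ as in Remark~\ref{rmk:fibrants-accessible}), the same use of Theorem~\ref{thm:hocolim-colim} to convert the canonical $\lambda$-filtered colimit presentations into homotopy colimit presentations, and the same two verification steps. The differences are in the middle step. Where the paper disposes of homotopy $\lambda$-presentability in one sentence (``$\lambda$-filtered colimits are weakly equivalent to homotopy $\lambda$-filtered colimits\dots{} Since the same is true in \cv\dots''), you route the argument through Proposition~\ref{prop:preservation}; this is only a repackaging, since the proof of that proposition is exactly the comparison the paper has in mind, but your verification of its hypotheses for $F=\cm(K',-)$ (right Quillen since $K'$ is cofibrant, hence preserves weak equivalences between fibrant objects; lands in $\Int\cv$ because all objects of \cv are cofibrant) is clean and correct. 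More significantly, you explicitly confront a point the paper passes over in silence: Remark~\ref{rmk:fibrants-accessible} delivers $\lambda$-presentability in the \emph{ordinary} category $\cm_0$, whereas both your argument and the paper's need the \emph{enriched} statement that $\cm(K',-)\colon\cm\to\cv$ preserves $\lambda$-filtered colimits. Your reduction via the copower adjunction to the claim that $P\cdot K'$ is $\lambda$-presentable in $\cm_0$ for $P$ $\lambda$-presentable in \cv, followed by enlarging $\lambda$, is the right fix; the precise justification is the uniformization theorem for accessible functors (\cite[2.19]{AR}, applied to the copower bifunctor $\cv_0\times\cm_0\to\cm_0$, which preserves filtered colimits because it is cocontinuous in each variable) rather than anything stated verbatim in \cite{K} or \cite{vcat}, so that citation should be sharpened--but the mathematics is sound, and this is exactly the level of cardinal bookkeeping the paper itself engages in elsewhere (e.g.\ Remark~\ref{rmk:orthog-class-lambda}).
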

\begin{proof}
We know that $\Int\cm$ has weighted homotopy colimits. By Remark~\ref{rmk:fibrants-accessible}, there is a regular cardinal $\lambda$ such that each object in $\Int\cm$ is a $\lambda$-filtered colimit of objects from $\Int\cm$ which are $\lambda$-presentable in $\cm$. Moreover, $\cm$ is $\lambda$-combinatorial. 
Thus $\lambda$-filtered colimits are weakly equivalent to homotopy $\lambda$-filtered colimits: see Theorem~\ref{thm:hocolim-colim}. 
Since the same is true in $\cv$, objects from $\Int\cm$ which are $\lambda$-presentable in $\cm$ are homotopy $\lambda$-presentable in $\Int\cm$.
We choose \ca to consist of those objects of $\Int\cm$ which are $\lambda$-presentable in \cm. 

 By Remark~\ref{rmk:fibrants-accessible}, any object of $\Int\cm$ is a $\lambda$-filtered colimit of objects in \ca. Since this $\lambda$-filtered colimit lies in $\Int\cm$, it is a homotopy colimit.
\end{proof}

\begin{theo}\label{th6.12} 
Let $\cv$ be a combinatorial monoidal model category in which all objects are cofibrant, and let $\cm$ be a tractable model $\cv$-category. Then each small homotopy orthogonality class in $\Int\cm$ is strongly homotopy locally presentable, and so also homotopy locally presentable.
\end{theo}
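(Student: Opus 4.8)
The statement has two halves, but the second is immediate: once $\ca$ is known to be strongly homotopy locally presentable, Proposition~\ref{prop:strong-weak} delivers that it is homotopy locally presentable. So the real task is to exhibit $\ca$ as strongly homotopy locally $\lambda$-presentable for a suitable $\lambda$. The cleanest route I would take is to invoke Remark~\ref{rmk:orthog-and-localization}: being a small homotopy orthogonality class, $\ca=\cf$-$\Inj$ for a set $\cf$, and then $\ca$ coincides with $\Int\cn$ for the combinatorial model $\cv$-category $\cn$ obtained as the $\cf$-localization of $\cm$. Proposition~\ref{prop:IntMisLP}, applied to $\cn$ in place of $\cm$, says precisely that $\Int\cn=\ca$ is strongly homotopy locally presentable. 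This reduces the whole theorem to two earlier results; the only things to check are that the identification $\ca=\Int\cn$ is one of $\cv$-categories and that $\cn$ meets the hypotheses of Proposition~\ref{prop:IntMisLP} (it is a combinatorial model $\cv$-category, and $\cv$ has all objects cofibrant by assumption).

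Should one prefer an argument that does not pass through the localized model structure, I would build the witnessing subcategory by hand, following the pattern of Proposition~\ref{prop:IntMisLP}. First choose $\lambda$ large enough that, by Corollary~\ref{cor5.7}, $\ca$ is closed in $\Int\cm$ under homotopy $\lambda$-filtered colimits, and that, by Remark~\ref{rmk:orthog-class-lambda}, every $X\in\ca$ is a canonical $\lambda$-filtered colimit in $\cm$ of objects lying in $\ca$ and $\lambda$-presentable in $\cm$; enlarging $\lambda$ if necessary I also demand that $\cv$ be $\lambda$-combinatorial and locally $\lambda$-presentable as a closed category and that $\cm$ be $\lambda$-combinatorial. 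Let $\cb$ be the (essentially small) full subcategory of $\ca$ on the objects that are $\lambda$-presentable in $\cm$. Exactly as in the proof of Proposition~\ref{prop:IntMisLP}, Theorem~\ref{thm:hocolim-colim} shows that such objects are homotopy $\lambda$-presentable in $\Int\cm$, and since $\ca$ is closed under homotopy $\lambda$-filtered colimits (so that these are computed as in $\Int\cm$) they are homotopy $\lambda$-presentable in $\ca$ as well. Finally, given $X\in\ca$, write it as the conical $\lambda$-filtered colimit $X=\colim_j X_j$ in $\cm$ with each $X_j\in\cb$; by Theorem~\ref{thm:hocolim-colim} the comparison $\hocolim_j X_j\to X$ is a weak equivalence, hence a homotopy equivalence since both objects lie in $\Int\cm$, and so by Proposition~\ref{prop:hcolimit-uniqueness}(2) the object $X$ itself serves as a homotopy $\lambda$-filtered colimit of the $X_j$. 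This exhibits $\ca$ as strongly homotopy locally $\lambda$-presentable.

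The point needing the most care in the direct argument is the requirement, built into Definition~\ref{def6.9}, that $\ca$ actually \emph{has} weighted homotopy colimits; this is what the clean approach gets for free from Theorem~\ref{thm:colimit-existence} once $\ca$ is recognized as $\Int\cn$. Directly, I would deduce it from homotopy reflectivity (Theorem~\ref{thm:orthogonality-class}): given $S\colon\cd\to\ca$ and a cofibrant weight $G$, form $G\ast_h S$ in $\Int\cm$ and take its homotopy reflection $\eta\colon G\ast_h S\to(G\ast_h S)^\ast$ into $\ca$; homotopy orthogonality of each $L\in\ca$ to $\eta$, combined with the defining weak equivalence of $G\ast_h S$, shows that $(G\ast_h S)^\ast$ is the homotopy colimit of $S$ weighted by $G$ computed inside $\ca$. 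The second delicate transition is from the conical $\lambda$-filtered colimits of Remark~\ref{rmk:orthog-class-lambda}, which a priori live in $\cm$, to genuine homotopy $\lambda$-filtered colimits in $\ca$; this is exactly what Theorem~\ref{thm:hocolim-colim} together with homotopy repleteness of $\ca$ supplies. Given the availability of Remark~\ref{rmk:orthog-and-localization}, however, I expect the short proof to be the intended one, and there the only genuine obstacle is confirming that the cited results apply verbatim --- in particular that $\cm$ be taken left proper, as in the standing hypotheses of Section~\ref{sect:orthogonality-classes}.
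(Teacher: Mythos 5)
Your proposal is correct, and your ``fallback'' direct argument is essentially the paper's own proof: the paper constructs weighted homotopy colimits in the orthogonality class exactly as you do, by forming $G\ast_h JS$ in $\Int\cm$ (Theorem~\ref{thm:colimit-existence}) and applying the homotopy reflection supplied by Theorem~\ref{thm:orthog-refl}, composing the two defining weak equivalences to get the universal property inside the subcategory; it then invokes Remark~\ref{rmk:orthog-class-lambda} and repeats the argument of Proposition~\ref{prop:IntMisLP}, so that the objects of the class that are $\lambda$-presentable in $\cm$ are homotopy $\lambda$-presentable and every object is a homotopy $\lambda$-filtered colimit of these via Theorem~\ref{thm:hocolim-colim}. (Your explicit use of Corollary~\ref{cor5.7} to see that homotopy $\lambda$-filtered colimits in $\ca$ are computed as in $\Int\cm$ is in fact slightly more careful than the paper's ``the same argument yields''.) Your preferred first route is genuinely different from the paper's: identifying $\ca$ with $\Int\cn$ for the $\cf$-localized combinatorial model $\cv$-category $\cn$ via Remark~\ref{rmk:orthog-and-localization} and then citing Proposition~\ref{prop:IntMisLP} for $\cn$. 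This is legitimate and shorter, and it gets the existence of weighted homotopy colimits in $\ca$ for free from Theorem~\ref{thm:colimit-existence} applied to $\cn$; what it costs is reliance on the enriched Bousfield localization machinery, which the paper records only as a remark following Theorem~\ref{thm:orthog-refl} rather than as a proved theorem, together with the (easy but necessary) check that $\ca=\Int\cn$ as $\cv$-categories. Finally, you are right to flag left properness: both of your routes, and indeed the paper's own proof (through Theorem~\ref{thm:orthog-refl} and Remark~\ref{rmk:orthog-class-lambda}), use results whose stated hypotheses include left properness of $\cm$, which is absent from the statement of Theorem~\ref{th6.12}; this is a defect of the paper's statement rather than of your argument.
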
 
\begin{proof}
Let $\cm$ be a tractable model  \cv-category and $\ck$ a small homotopy orthogonality class in $\Int\cm$. By Theorem~\ref{thm:orthog-refl},
$\ck$ is homotopy reflective in $\Int\cm$;  let $L$ be a homotopy reflection. Consider a cofibrant weight $G\colon\cd\op\to\cv$ and a diagram $S\colon\cd\to\ck$. By the universal property of the homotopy weighted colimit $G\ast_h JS$, there are weak equivalences 
$$\Int\cm(G*_h JS,M)\to [\cd\op,\cv](G,\Int\cm(JS,M))$$
natural in $M\in\Int\cm$; by the universal property of the homotopy reflection $L(G*_h JS)$ there are weak equivalences 
$$\ck(L(G*_h JS),K)\to\Int\cm(G*_h JS,JK)$$
natural in $K\in\ck$. Taking $M=JK$ in the first  and then composing with the second, we obtain weak equivalences 
$$\ck(L(G*_h JS),K)\to [\cd\op,\cv](G,\Int\cm(JS,JK))\cong [\cd\op,\cv](G,\ck(S,K))$$
natural in $K\in\ck$. This shows that $L(G*_h JS)$ has the universal property of the homotopy colimit $G*_h S$ in \ck, and so that \ck has homotopy weighted colimits. 

By Remark~\ref{rmk:orthog-class-lambda} (1), there is a regular cardinal $\lambda$ such that $\cm$ is $\lambda$-com\-bi\-na\-to\-rial and each object $X$ 
in $\ck$ is a $\lambda$-filtered colimit in $\cm$ of objects from $\ck$ which are $\lambda$-presentable in $\cm$. Hence the same
argument as in the proof of Proposition~\ref{prop:IntMisLP} yields that $\ck$ is strongly homotopy locally $\lambda$-presentable.
\end{proof}

\begin{theo}\label{thm:hlp-we-IntM} 
Let $\cv$ be a combinatorial monoidal model category in which all objects are cofibrant. Then, assuming Vop\v enka's principle, any homotopy locally presentable $\cv$-category $\ck$ admits a weak equivalence $\ck\to\Int\cm$ where $\cm$ is a combinatorial model $\cv$-category. Furthermore, \cm can be taken to be a left Bousfield localization of an (enriched) presheaf category  with respect to a set of morphisms.
\end{theo}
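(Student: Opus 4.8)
The plan is to identify \ck with the replete essential image of the comparison \cv-functor $QE\colon\ck\to\Int\cp$, where $\cp:=[\ca\op,\cv]$, and to exhibit that image as a small homotopy orthogonality class, at which point Remark~\ref{rmk:orthog-and-localization} delivers the desired model \cv-category. First I would use that \ck is homotopy locally presentable to fix a small full subcategory $J\colon\ca\to\ck$ of homotopy $\lambda$-presentable objects for which the composite $QE\colon\ck\to\cp$, with $EX=\ck(J-,X)$, is locally a weak equivalence (Definition~\ref{def6.9}). Since every object of \cv is cofibrant, \cv is tractable and left proper, and therefore so is the projective model \cv-category $\cp$; in particular $\Int\cp$ is a fibrant \cv-category and the standing hypotheses of Theorem~\ref{vopenka} and Remark~\ref{rmk:orthog-and-localization} hold with $\cm=\cp$. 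As \ck is fibrant, $E$ has fibrant values, so $QE$ lands in $\Int\cp$. Let \cl be the full sub-\cv-category of $\Int\cp$ on those objects homotopy equivalent to some $QEX$. Then \cl is homotopy replete, and $QE\colon\ck\to\cl$ is homotopically surjective on objects by construction and locally a weak equivalence because $QE$ is; hence it is a Dwyer-Kan equivalence.

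The crux is to show that \cl is a homotopy orthogonality class in $\Int\cp$, and I would first establish that \cl is homotopy reflective. Given $P\in\Int\cp$, regard $P$ as a cofibrant weight $\ca\op\to\cv$ and form the weighted homotopy colimit $P*_hJ$ in \ck, which exists since \ck has weighted homotopy colimits; put $P^\ast:=QE(P*_hJ)\in\cl$, and let $\eta_P\colon P\to P^\ast$ be a lift through the trivial fibration $q\colon QE(P*_hJ)\to E(P*_hJ)$ of the cocone $\delta\colon P\to\ck(J,P*_hJ)=E(P*_hJ)$, which exists because $P$ is cofibrant. For each $L=QEX$ in \cl, both $\Int\cp(P^\ast,L)$ and $\Int\cp(P,L)$ are weakly equivalent to $\cp(P,EX)$ — the former because $QE$ is locally a weak equivalence together with the universal property of $P*_hJ$, the latter because $P$ is cofibrant and $q$ is a weak equivalence between fibrant objects — and these equivalences are compatible with $\Int\cp(\eta_P,L)$, so the latter is a weak equivalence. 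Thus every $L$ in \cl is homotopy orthogonal to $\eta_P$.

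Let \ch be the (possibly proper) class of all the reflection maps $\eta_P$, $P\in\Int\cp$. Reflectivity gives $\cl\subseteq\ch$-$\Inj$. Conversely, if $K\in\ch$-$\Inj$ then $\Int\cp(\eta_K,K)$ is a weak equivalence, while $\Int\cp(\eta_K,K^\ast)$ is one since $K^\ast\in\cl$; by Proposition~\ref{prop:hty-equiv} this forces $\eta_K$ to be a homotopy equivalence, so $K\simeq K^\ast\in\cl$ and repleteness gives $K\in\cl$. Hence $\cl=\ch$-$\Inj$ is a homotopy orthogonality class. Invoking Vop\v enka's principle, Theorem~\ref{vopenka} upgrades it to a small homotopy orthogonality class $\cf$-$\Inj$, and Remark~\ref{rmk:orthog-and-localization} then furnishes a combinatorial model \cv-category \cn — the $\cf$-localized one, a left Bousfield localization of $\cp$ — with $\cl=\Int\cn$. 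The Dwyer-Kan equivalence $QE\colon\ck\to\cl=\Int\cn$ is the required weak equivalence.

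I expect the main obstacle to be the verification of homotopy reflectivity, specifically constructing $\eta_P$ and checking that the two weak equivalences to $\cp(P,EX)$ are genuinely compatible with $\Int\cp(\eta_P,L)$ up to homotopy, i.e.\ that the relevant triangles commute up to homotopy. The identification $\cl=\ch$-$\Inj$ and the concluding appeals to Vop\v enka's principle and to Remark~\ref{rmk:orthog-and-localization} are then formal, once one has confirmed at the outset that $\cp=[\ca\op,\cv]$ really satisfies the tractability and left-properness hypotheses those results require.
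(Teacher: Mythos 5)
Your proposal is correct, and its computational core coincides with the paper's: the same comparison functor $QE$, the same ``unit'' maps (your $\eta_P$, the paper's $\gamma_G$) obtained by lifting the homotopy-colimit cocone $\delta$ through the trivial fibration $q$, the same diagram chase (using $q\eta_P=\delta$ and $\cv$-naturality of $q$, which indeed makes your ``compatibility'' square commute on the nose, not just up to homotopy), and the same appeal to Vop\v enka's principle through Theorem~\ref{vopenka}. Where you genuinely diverge is in the organization. The paper localizes $[\ca\op,\cv]$ directly at the class $\cf$ of all units $\gamma_G$ for \emph{all} cofibrant weights $G$, invoking Theorem~\ref{vopenka} to see that this localization exists and agrees with localization at a set $\cf_\lambda$; it then proves that each $QEK$ is $\cf$-local (so $QE$ lands in $\Int\cm$) and that every $\cf$-local object $H$ is homotopy equivalent to $QE(H*_hJ)$ because Proposition~\ref{prop:hty-equiv} forces $\gamma_H$ to be a homotopy equivalence. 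You instead first name the homotopy-replete image $\cl$, prove it is homotopy reflective in $\Int{\cp}$ with reflections $\eta_P$ indexed only by $P\in\Int{\cp}$, identify $\cl=\ch$-$\Inj$ by exactly the argument of Theorem~\ref{thm:orthogonality-class}, and then quote Theorem~\ref{vopenka} and Remark~\ref{rmk:orthog-and-localization} as black boxes to produce $\cn$ with $\Int\cn=\cl$. This buys a shorter, more formal conclusion--you never have to reason explicitly about the fibrant and cofibrant objects of the localized model structure--at the price of routing through the reflectivity formalism of Section~4; the two steps of your identification $\cl=\ch$-$\Inj$ are precisely the paper's two verifications in disguise. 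Two small touch-ups: a general $L\in\cl$ is only homotopy \emph{equivalent} to some $QEX$, so you should cite Lemma~\ref{lemma:hty-orthog-invariance} to transfer homotopy orthogonality from $QEX$ to $L$; and the fibrancy of $QEX$ (needed for $QE$ to land in $\Int{\cp}$) deserves the one-line remark that $q\colon QEX\to EX$ is a fibration with fibrant codomain.
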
 

\begin{proof} Let $J\colon\ca\to\ck$ exhibit \ck as homotopy locally $\lambda$-presentable, where \cv is $\lambda$-combinatorial. We know that the composite
$$\xymatrix{
\ck \ar[r]^-{E} & [\ca\op,\cv] \ar[r]^-{Q} & [\ca\op,\cv] }$$
is locally a weak equivalence, but there is no reason why it should be homotopy surjective on objects. We deal with this via a suitable localization.

For each cofibrant weight $G\in[\ca\op,\cv]$, we have the homotopy colimit $G*_h J$ in \ck, and the corresponding unit 
$$\xymatrix{
G \ar[r]^-{\delta_G} & \ck(J,G*_h J) =  E(G*_h J). }$$
Since the cofibrant replacement map $q\colon QE(G*_h J)\to E(G*_h J)$ is a trivial fibration and $G$ is cofibrant, we may choose a lifting 
$$\xymatrix{
G \ar[r]^-{\gamma_G} & QE(G*_h J) }$$
of $\delta_G$ through $q\colon QE(G*_h J)\to E(G*_h J)$. 
The collection \cf of all such $\gamma_G$ with $G$ cofibrant can be written as the union $\cf=\cup_\lambda\cf_\lambda$, where $\cf_\lambda$ is the (small) set of those $\gamma_G$ for which $G$ is cofibrant and $\lambda$-presentable in $[\ca^{\op},\cv]$. Since all objects of \cv are cofibrant, $[\ca^{\op},\cv]$ is tractable and left proper, and so we may form the enriched left Bousfield localization $\cm_\lambda$ of $[\ca^{\op},\cv]$ with respect to $\cf_\lambda$: see \cite[4.46]{Ba}. Let $\cw_\lambda$ denote the class of weak equivalences in $\cm_\lambda$. 
Then the $\cw_\lambda$ form an increasing chain and we write \cw for its union. By \ref{vopenka}, a left Bousfield localization $\cm$ of $[\ca\op,\cv]$ with respect to $\cf$ exists, has $\cw$ as the class of weak equivalences and is equal to a left Bousfield localization $[\ca\op,\cv]$ with respect to some 
$\cf_\lambda$.  Thus $\cm$ is a combinatorial model $\cv$-category.

The cofibrant objects of \cm are the same as the cofibrant objects of $[\ca\op,\cv]$. The fibrant objects of \cm are those fibrant objects $H$ of $[\ca\op,\cv]$ which are moreover {\em \cf-local}, which here means that they are homotopy orthogonal to the $\gamma_G$. (By the invariance results of Proposition~\ref{prop:hcolimit-uniqueness}, this is independent of the choice of homotopy colimits.) This means that composition with each $\gamma_G$ induces a weak equivalence 
$$[\ca\op,\cv](QE(G*_h J),H)\to [\ca\op,\cv](G,H)$$
in \cv, and so a homotopy equivalence in $\Int\cv$. 

In the following commutative diagram, in which $qEK_*$ denotes composition with $qEK$,
$$\xymatrix{
& \ck(G*_h J,K) \ar[d]_{E} \\
& [\ca\op,\cv](E(G*_h J),EK) \ar[d]_{q^*} \ar[dl]_{Q} \ar `r[r]`[ddr]^{\delta^*_G}[dd]
& \\
[\ca\op,\cv](QE(G*_h J),QEK) \ar[r]_{qEK_*} \ar[d]_{\gamma^*_G} & [\ca\op,\cv](QE(G*_h J),EK) \ar[d]_{\gamma^*_G} \\
[\ca\op,\cv](G,QEK) \ar[r]_{qEK_*} & 
[\ca\op,\cv](G,EK) & }$$
the right-hand path $\delta^*_G E$ is a weak equivalence by the universal property of the homotopy colimit $G*_h J$, thus the left-hand path is also a weak equivalence. But the final map $qEK_*$ of the left-hand path is a weak equivalence since $G$ is cofibrant and $qEK$ is a trivial fibration; also, we saw above that the composite $QE$ appearing at the beginning of the left-hand path is a weak equivalence; it follows therefore that the $\gamma^*_G$ appearing in the middle of the left-hand path is a weak equivalence. This proves that $QEK$ is \cf-local, and so that $QE\colon\ck\to[\ca\op,\cv]$ takes values in $\Int\cm$.

Since, as a \cv-category, \cm is just $[\ca\op,\cv]$, the induced \cv-functor $\ck\to\Int\cm$ will still be locally a weak equivalence. It remains to show that it is homotopy surjective; equivalently, that if $H\in\Int[\ca\op,\cv]$ is \cf-local, then it is homotopy equivalent to some $QEK$. But this follows from
the fact that $\gamma_H:H\to QE(H\ast_h J)$ is a homotopy equivalence.
\end{proof}

\begin{propo}\label{prop:hlp-invariance}
Let \cv be a combinatorial monoidal model category in which all objects are cofibrant, and let $W\colon\ck\to\cl$ be a weak equivalence of fibrant \cv-categories. If \cl is homotopy locally $\lambda$-presentable, then so is \ck. 
\end{propo}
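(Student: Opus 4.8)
The plan is to transport a witnessing subcategory from \cl back to \ck along $W$, and to reduce the required local-weak-equivalence statement for \ck to the one already holding in \cl. Fix a small full subcategory $J\colon\cb\to\cl$ of homotopy $\lambda$-presentable objects exhibiting \cl as homotopy locally $\lambda$-presentable, so that $QE_\cb\colon\cl\to[\cb\op,\cv]$ is locally a weak equivalence. Since being homotopy locally $\lambda$-presentable presupposes weighted homotopy colimits, \cl has them, and then so does \ck by Proposition~\ref{hococomplete}(2). For each $B\in\cb$, homotopical surjectivity of $W$ (together with Remark~\ref{rmk:hty-equiv-symmetry}) provides an object $A_B\in\ck$ and a homotopy equivalence $h_B\colon WA_B\to B$; let $J_\ca\colon\ca\to\ck$ be the full sub-\cv-category of \ck on the objects $A_B$. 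Each $A_B$ is homotopy $\lambda$-presentable: by Proposition~\ref{prop:hty-equiv} the induced $\cl(h_B,-)\colon\cl(B,-)\to\cl(WA_B,-)$ is a pointwise homotopy equivalence, so $WA_B$ is homotopy $\lambda$-presentable in \cl by Lemma~\ref{pres1}, whence $A_B$ is homotopy $\lambda$-presentable in \ck by Proposition~\ref{hococomplete}(3).

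It then remains to show that $QE_\ca\colon\ck\to[\ca\op,\cv]$ is locally a weak equivalence. The key tool I would isolate is the following consequence of Proposition~\ref{prop:DKimpliesQ}: if $V\colon\cx\to\cy$ is a Dwyer-Kan equivalence of small \cv-categories, then the restriction \cv-functor $V^*\colon[\cy\op,\cv]\to[\cx\op,\cv]$ induces weak equivalences on derived hom-objects $[\cy\op,\cv](QP,P')\to[\cx\op,\cv](QV^*P,V^*P')$ for fibrant $P,P'$, compatibly with the canonical comparison maps. Indeed, $V_!\dashv V^*$ is a Quillen equivalence, so the enriched adjunction isomorphism $[\cy\op,\cv](V_!X,P')\cong[\cx\op,\cv](X,V^*P')$ evaluated at $X=QV^*P$ identifies the target with $[\cy\op,\cv](V_!QV^*P,P')$, and the derived counit $V_!QV^*P\to P$ is a weak equivalence.

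Next I would introduce the auxiliary full subcategories $\cb'$ of \cl on the objects $WA_B$ and $\cb''$ of \cl on the objects $B$ together with the $WA_B$. The inclusions $\cb\hookrightarrow\cb''\hookleftarrow\cb'$ are full and homotopically surjective, hence Dwyer-Kan equivalences, as is the corestriction $W_\ca\colon\ca\to\cb'$ of $W$ (locally a weak equivalence and surjective on objects); moreover $\cb'$ consists of homotopy $\lambda$-presentable objects by the previous paragraph. Restriction along the inclusions carries $E_{\cb''}$ to $E_\cb$ and to $E_{\cb'}$ on the nose. Applying the displayed tool to $\cb\hookrightarrow\cb''$ and $\cb'\hookrightarrow\cb''$, and using that \cb exhibits \cl as homotopy locally $\lambda$-presentable, I conclude that $E_{\cb''}$ and hence $\cb'$ do so as well. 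Finally, since $W$ is locally a weak equivalence, there is a pointwise weak equivalence $\theta\colon E_\ca\Rightarrow (W_\ca)^*E_{\cb'}W$ of \cv-functors $\ck\to[\ca\op,\cv]$ with fibrant values. Reducing the condition for $QE_\ca$ to that for $q^*\circ E_\ca$ exactly as in the proof of Proposition~\ref{prop:strong-weak}, and invoking $\theta$, it suffices to check the derived comparison map of the composite \cv-functor $(W_\ca)^*E_{\cb'}W$; but this factors as $W$ (locally a weak equivalence), the comparison map for $E_{\cb'}$ (a weak equivalence by the previous step), and the map induced by $(W_\ca)^*$ (a weak equivalence by the tool applied to $W_\ca$). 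Hence \ca exhibits \ck as homotopy locally $\lambda$-presentable.

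The hard part will be the bookkeeping in the last two paragraphs: one must verify that the chain of weak equivalences between the various derived hom-objects is genuinely compatible with the canonical comparison maps out of $\ck(K,K')$ and $\cl(Y,Y')$, so as to conclude that the specific map defining ``$QE_\ca$ locally a weak equivalence'' is itself a weak equivalence, rather than merely that its source and target are abstractly weakly equivalent. This naturality rests on the enriched adjunction isomorphisms being \cv-natural and on functoriality of the cofibrant replacement $Q$, and it is precisely what forces the use of the bridging category $\cb''$, since there is in general no direct \cv-functor between \cb and $\cb'$ realizing the homotopy equivalences $h_B$.
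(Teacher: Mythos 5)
Your proposal is correct and follows essentially the same route as the paper's own proof: transport the generating subcategory along $W$ (using Proposition~\ref{hococomplete} for homotopy cocompleteness and presentability), invoke the Quillen equivalence of Proposition~\ref{prop:DKimpliesQ} for the induced Dwyer--Kan equivalence of small \cv-categories, and transfer ``locally a weak equivalence'' across the pointwise weak equivalence $E_\ca\Rightarrow (W_\ca)^*E_{\cb'}W$ induced by $W$. The only organizational differences are that the paper enlarges \cb once, merely asserting what you prove via the bridging category $\cb''$ (that the enlarged subcategory still exhibits \cl as homotopy locally $\lambda$-presentable), and that it packages your final compatibility bookkeeping into a standalone lemma comparing three functors $R,S,T$ with pointwise weak equivalences $R\to S\leftarrow T$, which disposes of exactly the naturality issue you flag as ``the hard part.''
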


\proof
Let $J\colon\cb\to\cl$ exhibit \cl as homotopy locally $\lambda$-presentable. Since $W$ is a weak equivalence, for each $B\in\cb$ there is some $A\in\ck$ for which there is a homotopy equivalence $B\to WA$. If we enlarge \cb so as to include these objects $WA$, the resulting $J\colon\cb\to\cl$ will still exhibit \cl as homotopy locally $\lambda$-presentable. Let $H\colon\ca\to\ck$ be the full subcategory consisting of all such objects $A$, and let $V\colon\ca\to\cb$ be the restriction of $W$. By Proposition~\ref{hococomplete}, we know that \ck is homotopically cocomplete, and that each $A\in\ca$ is homotopically $\lambda$-presentable in \ck.  It remains to prove that $Q\tilde{H}\colon\ck\to\Int[\ca\op,\cv]$ is locally a weak equivalence, where $\tilde{H}$ is given by $\tilde{H}X=\ck(H,X)$. 

Now $V\colon\ca\to\cb$ is a weak equivalence of \cv-categories, and so by Proposition~\ref{prop:DKimpliesQ} the \cv-functor $V^*\colon[\cb\op,\cv]\to[\ca\op,\cv]$ given by restriction along $V$ is part of a Quillen equivalence $V_!\dashv V^*$.  In particular, this means that the functor 
$$QV^*\colon \Int[\cb\op,\cv]\to\Int[\ca\op,\cv]$$
is locally a weak equivalence. Thus the composite 
$$\xymatrix{
\ck \ar[r]^{W} & \cl \ar[r]^-{Q\tilde{J}} & \Int[\cb\op,\cv] \ar[r]^{QV^*} & \Int[\ca\op,\cv] }$$
is locally a weak equivalence, where $\tilde{J}L=\cl(J-,L)$. 

We now have three \cv-functors from \ck to $[\ca\op,\cv]$, namely $Q\tilde{H}$, $V^*\tilde{J}W$, and $QV^*Q\tilde{J}W$; we know that the last is locally a weak equivalence and we want to show that the first is one. 

Now 
$$V^*\tilde{J}WX=V^*\cl(J-,WX)=\cl(JV-,WX)=\cl(WH-,WX)$$
and composition with $W$ induces a morphism 
$$\xymatrix{
\ck(H-,X) \ar[r]^-{W_{H_,X}} & \cl(WH-,WX) }$$
in $[\ca\op,\cv]$ which is a weak equivalence; as $X$ varies in \ck, the $W_{H,X}$  define a \cv-natural 
$$\xymatrix{
\ck \ar@/^1pc/[rrr]^{\tilde{H}}_{~}="1" \ar@/_1pc/[rrr]_{V^*\tilde{J}W}^{~}="2" &&& [\ca\op,\cv] 
\ar@{=>}"1";"2"^{W_{H,-}} }$$
which is a pointwise weak equivalence.

On the other hand $q\colon Q\tilde{J}W\to\tilde{J}W$ is a pointwise trivial fibration, and $V^*$ preserves trivial fibrations and so the composites 
$$\xymatrix{
Q\tilde{H} \ar[r]^{q} & \tilde{H} \ar[r]^-{W_{H,-}} & V^*\tilde{J}W & 
QV^*Q\tilde{J}W \ar[r]^{q} & V^*Q\tilde{J}W \ar[r]^{V^*q} & V^*\tilde{J}W }$$
are pointwise weak equivalences. 
Also $Q\tilde{H}$ and $QV^*Q\tilde{J}W$ are pointwise fibrant and cofibrant (so take values in $\Int[\ca\op,\cv]$), while $V^*\tilde{J}W$ is pointwise fibrant. 
Thus by the following lemma, $Q\tilde{H}$ is locally a weak equivalence because $QV^*Q\tilde{J}W$ is one; this completes the proof of the proposition. 

\begin{lemma}
  Let \cm be a model \cv-category and \ck an arbitrary \cv-category. Let $R,S,T\colon\ck\to\cm$ be \cv-functors with fibrant values, and suppose further that $R$ and $T$ have cofibrant values. Let $v\colon R\to S$ and $q\colon T\to S$ be pointwise weak equivalences. Then $R$ is a local weak equivalence if and only if $T$ is one. 
\end{lemma}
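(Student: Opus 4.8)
The plan is to prove the pointwise statement: for fixed $K,K'\in\ck$, the map $R_{K,K'}\colon\ck(K,K')\to\cm(RK,RK')$ is a weak equivalence if and only if $T_{K,K'}\colon\ck(K,K')\to\cm(TK,TK')$ is one. The whole difficulty is that, although $R$ and $T$ have cofibrant and fibrant values, the intermediate functor $S$ is only pointwise fibrant, so one cannot simply compare $R$ and $T$ by transporting along $v$ and $q$ through $S$; precomposition with $v_K$ or $q_K$ need not be a weak equivalence, since $SK$ need not be cofibrant.

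First I would use \cv-naturality to peel off the target variable. The naturality squares for $v\colon R\to S$ and $q\colon T\to S$ give
$$\cm(RK,v_{K'})\circ R_{K,K'}=\cm(v_K,SK')\circ S_{K,K'},\qquad \cm(TK,q_{K'})\circ T_{K,K'}=\cm(q_K,SK')\circ S_{K,K'}.$$
Since $RK$ and $TK$ are cofibrant while $v_{K'}\colon RK'\to SK'$ and $q_{K'}\colon TK'\to SK'$ are weak equivalences between fibrant objects, Ken Brown's lemma (for cofibrant $A$ the functor $\cm(A,-)$ preserves weak equivalences between fibrant objects) shows that $\cm(RK,v_{K'})$ and $\cm(TK,q_{K'})$ are weak equivalences. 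Hence $R_{K,K'}$ is a weak equivalence if and only if $\cm(v_K,SK')\circ S_{K,K'}$ is, and $T_{K,K'}$ is a weak equivalence if and only if $\cm(q_K,SK')\circ S_{K,K'}$ is.

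Next I would build a homotopy comparison between these two composites. Because $v_K\colon RK\to SK$ is a weak equivalence between fibrant objects and $TK$ is cofibrant, there is a map $u_K\colon TK\to RK$ with $v_Ku_K$ homotopic to $q_K$; by two-out-of-three $u_K$ is a weak equivalence between the cofibrant objects $TK$ and $RK$. As $SK'$ is fibrant, $\cm(-,SK')$ carries weak equivalences between cofibrant objects to weak equivalences, so $\cm(u_K,SK')$ is a weak equivalence. Moreover $v_Ku_K$ and $q_K$ are homotopic maps between the cofibrant object $TK$ and the fibrant object $SK$, so applying the enriched functor $\cm(-,SK')$ yields homotopic maps in \cv; thus $\cm(u_K,SK')\circ\cm(v_K,SK')=\cm(v_Ku_K,SK')$ is homotopic to $\cm(q_K,SK')$. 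Precomposing with $S_{K,K'}$, the maps $\cm(u_K,SK')\circ\cm(v_K,SK')\circ S_{K,K'}$ and $\cm(q_K,SK')\circ S_{K,K'}$ are homotopic maps $\ck(K,K')\to\cm(TK,SK')$.

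Finally I would assemble the iff chain. The common source $\ck(K,K')$ is cofibrant (all objects of \cv are cofibrant) and the target $\cm(TK,SK')$ is fibrant ($TK$ cofibrant, $SK'$ fibrant), so two homotopic maps between them agree in $\Ho\cv$ and are weak equivalences together. Chaining: $R_{K,K'}$ is a weak equivalence iff $\cm(v_K,SK')\circ S_{K,K'}$ is, iff $\cm(u_K,SK')\circ\cm(v_K,SK')\circ S_{K,K'}$ is (as $\cm(u_K,SK')$ is a weak equivalence), iff the homotopic map $\cm(q_K,SK')\circ S_{K,K'}$ is, iff $T_{K,K'}$ is a weak equivalence. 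Since $K,K'$ were arbitrary, this proves the lemma. The hard part is precisely the non-cofibrancy of the values of $S$: it forces the auxiliary homotopy inverse $u_K$ and a passage through the homotopy relation, and one must keep track that the maps being compared have cofibrant source and fibrant target, so that homotopic maps are simultaneously weak equivalences.
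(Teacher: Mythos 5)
Your proof is correct, and it takes a genuinely different route from the paper's. The paper strictifies the comparison: it reduces to small $\ck$ (so that $[\ck,\cv]$ carries the projective model structure), constructs a pointwise cofibrant functor $P$ with pointwise weak equivalences $p\colon P\to R$ and $u\colon P\to T$ satisfying $vp=qu$ \emph{exactly} (described as a cofibrant replacement of the pullback of $v$ and $q$), and then runs a single diagram chase into $\cm(PX,SY)$, whose only homotopical inputs are the same two forms of Ken Brown's lemma you use; the homotopy relation never appears. You instead stay pointwise and replace the strict roof by a direct comparison map: since $TK$ is cofibrant and $v_K$ is a weak equivalence between fibrant objects, $q_K$ lifts through $v_K$ up to homotopy, giving $u_K\colon TK\to RK$ with $v_Ku_K\simeq q_K$, and you then transport this homotopy along $\cm(-,SK')$ and conclude using the fact that homotopic maps have the same image in $\Ho\cv$ and so are weak equivalences together. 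Each approach buys something. Yours avoids the auxiliary functor $P$ and the projective model structure entirely, and this is a genuine advantage: read literally, the paper's construction is delicate, since the strict pullback of two pointwise weak equivalences need not have weakly equivalent projections (consider the two vertices of a contractible Kan complex), so the paper's $P$ really has to be a homotopy pullback, arranged with some care so that the square still commutes on the nose. The price you pay is two standard facts that each deserve a line of justification: that applying $\cm(-,SK')$ to a left homotopy out of the cofibrant object $TK$ yields a right homotopy (apply $\cm(-,SK')$ to a good cylinder $TK+TK\rightarrowtail \mathrm{Cyl}(TK)\xrightarrow{\sim}TK$; the model $\cv$-category axioms make $\cm(\mathrm{Cyl}(TK),SK')$ a path object for $\cm(TK,SK')$), and that homotopic maps become equal in $\Ho\cv$, so that one is a weak equivalence exactly when the other is. The paper's strictification renders both points unnecessary, since all of its squares commute exactly.
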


\proof
Since the property of being a local weak equivalence can be checked on each hom-object separately, it suffices to consider the case where \ck is small; then we can consider the projective model structure on $[\ck,\cv]$. 

Let $P$ be the (pointwise) cofibrant replacement of the pullback of $v$ and $q$; thus we have a square
$$\xymatrix{
P \ar[r]^{p} \ar[d]_{u} & R \ar[d]^{v} \\ T \ar[r]_{q} & S }$$
of pointwise weak equivalences.  Consider the diagram
$$\xymatrix @C3pc {
\ck(X,Y) \ar[rr]^{R_{X,Y}} \ar[dd]_{T_{X,Y}} \ar[dr]^{S_{X,Y}} &&
\cm(RX,RY) \ar[d]^{\cm(RX,vY)} \\
& \cm(SX,SY) \ar[r]^{\cm(vX,SY)} \ar[d]_{\cm(qX,SY)} & \cm(RX,SY) \ar[d]^{\cm(pX,SY)} \\
\cm(TX,TY) \ar[r]_{\cm(TX,qY)} & \cm(TX,SY) \ar[r]_{\cm(uX,SY)} & \cm(PX,SY) }$$
for objects $X,Y\in\ck$. 
Now $RX$ is cofibrant and $vY$ is a weak equivalence between fibrant objects, so $\cm(RX,vY)$ is a weak equivalence; similarly $\cm(TX,qY)$ is a weak equivalence. On the other hand $SY$ is fibrant and $pX$ is a weak equivalence between cofibrant objects, and so $\cm(pX,SY)$ is a weak equivalence; similarly $\cm(uX,SY)$ is a weak equivalence. It follows that $R_{X,Y}$ is a weak equivalence iff $T_{X,Y}$ is one. 
\endproof

Combining the various results proved so far, we obtain a characterization of homotopy locally presentable categories. 

\begin{theo}\label{thm:hlp-characterization}
Let \cv be a combinatorial monoidal model category in which all objects are cofibrant. Then, assuming Vop\v enka's principle, a fibrant \cv-category \ck is homotopy locally $\lambda$-presentable if and only if there is a weak equivalence $\ck\to\Int\cm$ for some  combinatorial model category \cm. Furthermore \cm can be taken to be a left Bousfield localization of an (enriched) presheaf category  with respect to a set of morphisms. 
\end{theo}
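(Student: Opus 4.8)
The plan is to read this ``if and only if'' as a statement that simply assembles the principal results already established, one direction per result, so that almost all of the genuine content is imported rather than reproved. First I would treat the ``only if'' direction. Suppose \ck is a fibrant homotopy locally $\lambda$-presentable \cv-category. Since this in particular makes \ck homotopy locally presentable, I would apply Theorem~\ref{thm:hlp-we-IntM} verbatim: under Vop\v enka's principle it produces a weak equivalence $\ck\to\Int\cm$ with \cm a combinatorial model \cv-category, and furthermore exhibits \cm as a left Bousfield localization of an (enriched) presheaf category with respect to a set of morphisms. This single citation yields both the required comparison and the ``furthermore'' clause, so there is nothing more to do in this direction.

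For the converse, suppose $W\colon\ck\to\Int\cm$ is a weak equivalence with \cm a combinatorial model \cv-category. I would first note that $W$ is a weak equivalence \emph{of fibrant \cv-categories}: \ck is fibrant by hypothesis, and $\Int\cm$ is fibrant since each hom-object $\cm(A,B)$ with $A$ cofibrant and $B$ fibrant is fibrant in \cv. Next, by Proposition~\ref{prop:IntMisLP}, $\Int\cm$ is strongly homotopy locally presentable, hence homotopy locally $\mu$-presentable for some regular cardinal $\mu$. Finally I would invoke Proposition~\ref{prop:hlp-invariance}, which transports homotopy local $\mu$-presentability backwards along a weak equivalence of fibrant \cv-categories, to conclude that \ck is homotopy locally $\mu$-presentable. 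Reading the left-hand side of the biconditional as ``homotopy locally presentable for some cardinal'' (the meaning fixed in Definition~\ref{def6.9}), this closes the equivalence.

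The only point needing a moment's care --- and the nearest thing to an obstacle --- is the bookkeeping of the cardinal parameter: the $\lambda$ that makes \ck homotopy locally presentable in the forward direction, the possibly different $\mu$ supplied by Proposition~\ref{prop:IntMisLP} in the converse, and the $\lambda$ displayed in the statement need not coincide a priori. Since Proposition~\ref{prop:hlp-invariance} preserves the cardinal, the clean reading is to regard the biconditional as a characterization of homotopy local presentability (existence of \emph{some} $\lambda$), and then no genuine difficulty arises. All the substantive work --- the localization construction and the appeal to Vop\v enka's principle --- has already been discharged inside Theorem~\ref{thm:hlp-we-IntM}, so this final statement is essentially a summary that packages Propositions~\ref{prop:IntMisLP} and~\ref{prop:hlp-invariance} together with that theorem into a single equivalence.
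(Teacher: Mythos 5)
Your proof is correct and is essentially identical to the paper's own argument: the paper likewise obtains the ``only if'' direction by citing Theorem~\ref{thm:hlp-we-IntM} (where Vop\v enka's principle is used) and the converse by combining Propositions~\ref{prop:IntMisLP} and~\ref{prop:hlp-invariance}, noting that this direction needs no large-cardinal hypothesis. Your remark on the cardinal bookkeeping matches the paper's implicit reading of the statement as a characterization of homotopy local presentability for \emph{some} regular cardinal.
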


\proof
 If there is such a weak equivalence $\ck\to\Int\cm$ then \ck is homotopy locally presentable by Propositions~\ref{prop:IntMisLP} and ~\ref{prop:hlp-invariance}; this does not require Vop\v enka's principle. We do use it for the converse, which is Theorem~\ref{thm:hlp-we-IntM}.
\endproof 

\begin{rem}\label{re6.14}
(1) The proof of Theorem~\ref{thm:hlp-we-IntM} shows that a homotopy locally presentable $\cv$-category admits a weak equivalence into a category of models of a $\lambda$-small weighted homotopy limit sketch.

(2) Using \cite{V}, we can replace the existence of weighted homotopy colimits in the definition of homotopy locally presentable $\cv$-category by the existence of homotopy colimits and homotopy copowers.

(3) We have generalized results given in \cite{R1} from $\SSet$ to any monoidal model category $\cv$ having all objects cofibrant. The paper \cite{R1} contained a stronger formulation of \ref{thm:hlp-we-IntM} which asserted that each fibrant simplicial category weakly equivalent to $\Int\cm$, where $\cm$ is a combinatorial simplicial model category, is homotopy locally presentable. This stronger formulation was withdrawn in \cite{R2}. 
In fact Vop\v enka's principle also seems to be needed for the arguments in \cite{R1}. 
\end{rem}

On the other hand, the following result does not require Vop\v enka's principle: 

\begin{propo}\label{prop:shlp-invarianece}
Let $\cv$ be a combinatorial monoidal model category in which all objects are cofibrant. Let $\ck$ and $\cl$ be fibrant $\cv$-categories, let 
$W_1:\ck\to\cl$ and $W_2:\cl\to\ck$ be weak equivalences, and suppose that $W_2W_1 X$ is homotopy equivalent to $X$ for each $X\in\ck$, and that $W_1 W_2 Y$ is homotopy equivalent to $Y$ for each $Y\in\cl$. 
Then $\ck$ is strongly homotopy locally $\lambda$-presentable if and only if $\cl$ is strongly homotopy locally $\lambda$-presentable.
\end{propo}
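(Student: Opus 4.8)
The plan is to exploit the symmetry of the hypotheses together with the transfer properties of Dwyer--Kan equivalences recorded in Proposition~\ref{hococomplete}. The stated hypotheses are symmetric under interchanging $(\ck,W_1)$ with $(\cl,W_2)$, so it suffices to prove one implication, say that if \cl is strongly homotopy locally $\lambda$-presentable then so is \ck; the other implication then follows by symmetry. Before exhibiting the witnessing subcategory, I would first check that \ck has the weighted homotopy colimits demanded by Definition~\ref{def6.9}. Since \cl is strongly homotopy locally $\lambda$-presentable it has weighted homotopy colimits, and since $W_1\colon\ck\to\cl$ creates homotopy colimits by Proposition~\ref{hococomplete}(2), every cofibrant weight $G$ and diagram $S\colon\cd\to\ck$ has a homotopy colimit $G*_h S$ in \ck (because $G*_h W_1S$ exists downstairs in \cl). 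Thus \ck is a legitimate candidate for the definition.

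Next, let $J\colon\cb\to\cl$ exhibit \cl as strongly homotopy locally $\lambda$-presentable, and take \ca to be the (small) full subcategory of \ck on the objects $W_2B$ for $B\in\cb$. Each $B$ is homotopy $\lambda$-presentable in \cl, so applying Proposition~\ref{hococomplete}(3) to $W_2\colon\cl\to\ck$ --- whose codomain \ck now has homotopy $\lambda$-filtered colimits by the previous step --- shows that $W_2B$ is homotopy $\lambda$-presentable in \ck. Hence \ca consists of homotopy $\lambda$-presentable objects, as required. It then remains to check the colimit-density condition: that every $X\in\ck$ is a homotopy $\lambda$-filtered colimit of objects of \ca. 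Given such an $X$, the object $W_1X\in\cl$ is a homotopy $\lambda$-filtered colimit of objects of \cb by strong local presentability of \cl. Applying $W_2$, which preserves homotopy colimits by Proposition~\ref{hococomplete}(1), exhibits $W_2W_1X$ as a homotopy $\lambda$-filtered colimit of the corresponding objects $W_2B$ of \ca. Finally the hypothesis gives $W_2W_1X\simeq X$, and since homotopy colimits are determined up to homotopy equivalence, with any object homotopy equivalent to a homotopy colimit itself serving as one (Proposition~\ref{prop:hcolimit-uniqueness}(2)), the object $X$ is also such a homotopy $\lambda$-filtered colimit. This shows that \ca exhibits \ck as strongly homotopy locally $\lambda$-presentable.

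The argument is essentially formal once Proposition~\ref{hococomplete} is available; the points needing care are purely bookkeeping. One must track the directions of $W_1$ and $W_2$ so that the appropriate part of Proposition~\ref{hococomplete} is invoked in each instance (creation via $W_1$ to get colimits in \ck, preservation and reflection of presentability via $W_2$), and one must apply the hypothesis $W_2W_1X\simeq X$ at precisely the moment when the homotopy colimit decomposition obtained in \cl is transported back into \ck. The only genuine (and very mild) obstacle is confirming at the outset that \ck possesses the weighted homotopy colimits required for the definition to make sense at all, which is exactly where the creation property does its work; everything else is a direct consequence of the preservation statements and the homotopy-invariance of homotopy colimits.
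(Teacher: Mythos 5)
Your proof is correct and follows essentially the same route as the paper's: reduce by symmetry, use Proposition~\ref{hococomplete}(2) applied to $W_1$ to obtain homotopy cocompleteness of $\ck$, take $\ca$ to be the image of $\cb$ under $W_2$, and get the colimit decomposition of any $X\in\ck$ by applying $W_2$ (which preserves homotopy colimits by Proposition~\ref{hococomplete}(1)) to the decomposition of $W_1X$ and then invoking $X\simeq W_2W_1X$ together with the homotopy invariance of homotopy colimits. The only inessential difference is that you establish homotopy $\lambda$-presentability of the objects $W_2B$ by applying Proposition~\ref{hococomplete}(3) to $W_2$ directly (legitimate, since you have already shown $\ck$ has homotopy $\lambda$-filtered colimits), whereas the paper applies it to $W_1$ after noting that $W_1W_2B\simeq B$; both instantiations are valid.
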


\proof
Clearly it suffices to prove that if \cl is strongly homotopy locally $\lambda$-presentable then so is \ck. By Proposition~\ref{hococomplete} we know that \ck is homotopically cocomplete. Let $J\colon\cb\to\cl$ exhibit \cl as strongly homotopically locally $\lambda$-presentable. Let $H\colon\ca\to\ck$ be the full image of \cb under $W_2\colon\cl\to\ck$, and let $V\colon \cb\to\ca$ be the restriction of $W_2$. 

Each $A\in\ca$ has the form $W_2 B$ for some $B\in\cb$. Now $B$ is homotopy $\lambda$-presentable, and $W_1A=W_1W_2B$ is homotopically equivalent to $B$ so is also homotopy $\lambda$-presentable.
It follows by Proposition~\ref{hococomplete} that $A$ is homotopy $\lambda$-presentable in \ck.

Finally, any $X\in\ck$ is homotopically equivalent to $W_2 W_1 X$, and we can write $W_1 X=\hocolim_i JB_i$ as a homotopy $\lambda$-filtered colimit of objects in \cb, and $W_2$ preserves homotopy colimits, so 
$$X\simeq W_2 W_1 X=W_2\hocolim_i JB_i \simeq \hocolim_i W_2JB_i$$
with each $W_2 JB_i\in\ca$, and so $X$ is a homotopy $\lambda$-filtered colimit of objects in \ca. 
\endproof

The following result was proved by D. Dugger \cite{D} in the case of simplicial model categories.

\begin{theo}\label{thm:GuillouMay-comparison}
Let \cv be a combinatorial monoidal model category in which all objects are cofibrant, and let \cn be a combinatorial model \cv-category. Then,  
there is a Quillen equivalence $U\colon\cn\to\cm$ where \cm is a left Bousfield localization of a \cv-presheaf category with respect to a set of morphisms;
\end{theo}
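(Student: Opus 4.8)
The plan is to present \cn as a left Bousfield localization of an enriched presheaf category by the usual ``nerve and realization'' device, and then to upgrade the \cv-categorical equivalence living at the level of $\Int$ to a genuine Quillen equivalence of the full model categories. First I would fix the generating data: by Proposition~\ref{prop:IntMisLP} there is a regular cardinal $\lambda$ and a small full sub-\cv-category $J\colon\ca\to\Int\cn$ of homotopy $\lambda$-presentable objects exhibiting $\Int\cn$ as strongly homotopy locally $\lambda$-presentable, and composing with the inclusion $\Int\cn\hookrightarrow\cn$ I regard $J$ as landing in \cn. Since \cn is cocomplete, the \cv-functor $\tilde J\colon\cn\to[\ca\op,\cv]$ sending $N$ to $\cn(J-,N)$ has a left adjoint $L=(-*J)$ given by weighted colimit. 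Because every $JA$ is cofibrant in \cn, the functor $\tilde J$ preserves fibrations and trivial fibrations for the projective model structure on $[\ca\op,\cv]$, so $L\dashv\tilde J$ is a \cv-enriched Quillen pair.

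Next I would localize. As all objects of \cv are cofibrant, $[\ca\op,\cv]$ is tractable and left proper, so by \cite{Ba} its left Bousfield localization \cm at a \emph{set} of maps exists. I would take the localizing set \cf to consist of the maps $\gamma_G\colon G\to Q\tilde J(G*_h J)$ built exactly as in the proof of Theorem~\ref{thm:hlp-we-IntM}, but with $G$ ranging only over a set of cofibrant, $\lambda$-presentable weights. The role of the \emph{strong} form of local presentability, together with Proposition~\ref{prop:filtered-finite-decomposition}, is that an arbitrary cofibrant weight is a $\lambda$-filtered homotopy colimit of $\lambda$-small ones; hence every $\gamma_G$ is a $\lambda$-filtered homotopy colimit of maps already in \cf, and an object homotopy orthogonal to \cf is automatically orthogonal to all the $\gamma_G$ (exactly as in Corollary~\ref{cor5.7}). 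This is what lets me extract a genuine set at the outset, without appealing to Vop\v enka's principle. I would then check, as in the proof of Theorem~\ref{thm:hlp-we-IntM}, that $\tilde J Y$ is \cf-local for every fibrant $Y\in\cn$; equivalently that $L$ carries each $\gamma_G$ to a weak equivalence in \cn, so that $L\dashv\tilde J$ descends to a Quillen pair $L\colon\cm\to\cn$, $\tilde J\colon\cn\to\cm$, with $U=\tilde J$ the asserted right Quillen functor.

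To see this is a Quillen equivalence I would verify the standard criterion: $\tilde J$ reflects weak equivalences between fibrant objects, and for each cofibrant $G\in\cm$ the derived unit is a weak equivalence. For reflection, a weak equivalence of \cm between \cf-local fibrant objects is a pointwise weak equivalence in $[\ca\op,\cv]$, that is $\cn(JA,Y)\to\cn(JA,Y')$ is a weak equivalence for all $A$; since \ca homotopy-generates $\Int\cn$, the representables $\cn(C,-)$ then detect that the map is a weak equivalence in \cn. For the derived unit, the composite $G\to\tilde J(G*J)\to\tilde J R(G*J)$ is, up to the trivial fibration $q$, precisely the map $\gamma_G$, which is an \cf-local equivalence by construction; filtering over the generating $\lambda$-small weights and then over all cofibrant $G$ via Proposition~\ref{prop:filtered-finite-decomposition} handles the general case. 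The underlying content is that the induced \cv-functor $Q\tilde J\colon\Int\cn\to\Int\cm$ is exactly the Dwyer--Kan equivalence delivered by the construction of Theorem~\ref{thm:hlp-we-IntM}, and the two conditions above are read off from its being locally a weak equivalence and homotopy surjective.

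The main obstacle is the passage from the \cv-categorical statement to the model-categorical one: the earlier results describe $\Int\cn$ and $\Int\cm$, that is, bifibrant objects and their homotopy (co)limits, whereas a Quillen equivalence is a statement about the \emph{entire} model categories. I must therefore control objects that are fibrant but not cofibrant and conversely, and pin down the precise derived unit and counit rather than just an abstract equivalence of homotopy categories. A second delicate point is avoiding Vop\v enka's principle: Theorem~\ref{thm:hlp-we-IntM} uses it only to replace the proper class of all $\gamma_G$ by a set, and here I must instead produce that set directly from the \emph{strong} homotopy local presentability of $\Int\cn$ and Proposition~\ref{prop:filtered-finite-decomposition}, and then confirm that localizing at it already yields the correct class of local objects.
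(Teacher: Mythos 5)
Your overall strategy coincides with the paper's: the same Quillen adjunction $L\dashv\tilde{J}$ induced by the inclusion $J\colon\ca\to\cn$ of a small full subcategory of bifibrant, homotopy $\lambda$-presentable objects; localization of $[\ca\op,\cv]$ at (essentially) the derived-unit maps for cofibrant $\lambda$-presentable weights (your $\gamma_G$ differs from the paper's $Qs_G\colon QG\to Q\tilde{J}RLG$ only by cofibrant replacement of the domain, which is immaterial); the check that $\tilde{J}$ of a fibrant object is \cf-local so that the adjunction descends to \cm; and a filtered-colimit bootstrap from the generating weights to all cofibrant weights. Your packaging of the Quillen-equivalence criterion (right adjoint reflects weak equivalences between fibrant objects, plus invertibility of the derived unit) is an acceptable substitute for the paper's (derived counit invertible because $Q\tilde{J}$ is locally a weak equivalence, plus derived unit).

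There is, however, a genuine gap at exactly the step you single out as the replacement for Vop\v enka's principle. You claim that, since every $\gamma_G$ is a $\lambda$-filtered (homotopy) colimit of maps in \cf, ``an object homotopy orthogonal to \cf is automatically orthogonal to all the $\gamma_G$ (exactly as in Corollary~\ref{cor5.7})''. Corollary~\ref{cor5.7} says something different: it asserts that the class of \cf-local \emph{objects} is closed under homotopy $\lambda$-filtered colimits. What you need is closure of the class of \cf-local \emph{equivalences} under $\lambda$-filtered colimits in the arrow category, and the contravariant argument you sketch does not deliver it: mapping the decomposition $\gamma_G\cong\colim_i\gamma_{G_i}$ into a local object $H$ produces a $\lambda$-cofiltered \emph{limit} of the weak equivalences $[\ca\op,\cv](\gamma_{G_i},H)$, and weak equivalences (indeed even trivial fibrations) are not closed under cofiltered limits, so no conclusion about $[\ca\op,\cv](\gamma_G,H)$ follows. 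The paper argues covariantly instead: for arbitrary cofibrant $G$, the map $Qs_G$ is a $\lambda$-filtered colimit of the maps $Qs_{G_i}\in\cf$, each of which is a weak equivalence \emph{in \cm} by construction of the localization; since $Q$ and $Q\tilde{J}RL$ preserve $\lambda$-filtered colimits (this is what the Dugger-style choice of $\lambda$ buys) and the weak equivalences of the combinatorial model category \cm are closed under $\lambda$-filtered colimits (Remark~\ref{rmk:combinatorial}), the derived unit at $G$ is a weak equivalence in \cm, which is equivalent to the locality statement you wanted. That closure-of-weak-equivalences ingredient is what your argument is missing; with it substituted for the appeal to Corollary~\ref{cor5.7}, your proof becomes the paper's.
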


\proof  
Define $\ca$ as in the proof of Theorem~\ref{thm:hlp-we-IntM}, with $J\colon\ca\to\cn$ the inclusion; this takes values in $\Int\cn$. Since \cn is cocomplete, the induced functor $\tilde{J}\colon\cn\to[\ca\op,\cv]$, sending $N\in\cn$ to $\cn(J-,N)\colon\ca\op\to\cv$, has a left adjoint $L$ sending $M\in[\ca\op,\cv]$ to $M*J$. Since $J$ has cofibrant values, $\tilde{J}$ preserves fibrations and trivial fibrations, and therefore $L\dashv \tilde{J}$ is a Quillen adjunction. We know that the induced map $\Int\cn\to\Int[\ca\op,\cv]$ is locally a weak equivalence, so that the derived functor $\Ho\cn\to\Ho[\ca\op,\cv]$ is fully faithful, and the counit of the derived adjunction is invertible. This in turn implies that the map 
$$\xymatrix{
LQ\tilde{J}N \ar[r]^{Lq} & L\tilde{J}N \ar[r]^{e} & N }$$
is a weak equivalence for all fibrant $N$, where $e$ is the counit of the adjunction $L\dashv \tilde{J}$. 

For all $G\in[\ca\op,\cv]$, we have the  composite
$$\xymatrix{
G \ar[r]^-{n} & \tilde{J}LG \ar[r]^-{\tilde{J}r} & \tilde{J}RLG }$$
where $n$ is the unit of the adjunction; this defines a natural map $s\colon 1\to \tilde{J}RL$. 
As in \cite[3.2]{D} or \cite{R}, choose 
$\lambda$ so that: 
\begin{itemize}
\item $Q$ and $R$ preserve $\lambda$-filtered colimits
\item each $A\in\ca$ is $\lambda$-presentable in \cn
\item $Q$ preserves $\lambda$-presentability
\end{itemize}
and let \cm be the localization of $[\ca\op,\cv]$ with respect to the set \cf of all $Qs_G\colon QG\to Q\tilde{J}RLG$ with $G$ cofibrant and $\lambda$-presentable. 

By virtue of the Quillen adjunction, for any  $G\in\Int[\ca\op\cv]$ and any $N\in\Int\cn$, the composite 
$$\xymatrix{
\cn(RLG,N) \ar[r]^-{Q\tilde{J}} & 
[\ca\op,\cv](Q\tilde{J}RLG,Q\tilde{J}N) \ar[r]^-{(Qs_G)^*} & 
[\ca\op,\cv](QG,Q\tilde{J}N) }$$
is a weak equivalence. Also the first map $Q\tilde{J}$ is a weak equivalence because $\Int\cn$ is homotopy locally presentable, and so the second map $(Qs_G)^*$ is a weak equivalence in $\Int\cv$, and so a homotopy equivalence. Thus $Q\tilde{J}N$ is \cf-local, and so by the universal property of the localization \cm, the Quillen adjunction $L\dashv U$ factorizes through \cm. The counit of the derived adjunction is still invertible, but we should check that the unit of the derived adjunction is so too; in other words, that for each cofibrant $G$, the map $s_G\colon G\to \tilde{J}RLG$ is a weak equivalence in \cm; or, equivalently, 
that $Qs_G\colon QG\to Q\tilde{J}RLG$ is a weak equivalence. But $G$ is a $\lambda$-filtered colimit of $\lambda$-presentable objects, while both $Q$ and $Q\tilde{J}RL$ preserve $\lambda$-filtered colimits and the weak equivalences are closed under $\lambda$-filtered colimits, so it will suffice to show that $Qs_G$ is a weak equivalence for all $\lambda$-presentable objects $G$. This is true precisely because we have localized with respect to all such $Qs_G$. 
\endproof



\appendix

\section{Enriched cofibrant replacement}\label{app}

In this appendix we discuss issues related to the existence of an enriched cofibrant replacement functor. The bad news is that this is rather rare: we show below that if such an enriched cofibrant replacement functor exists for a monoidal model category \cv with cofibrant unit, then all objects \cv must be cofibrant.  The good news is, as explained in \cite[Proposition~24.2]{S}, that if all objects of \cv are cofibrant, then any cofibrantly generated model \cv-category does have an enriched cofibrant replacement functor, formed by a straightforward adaptation of the small object argument to the enriched setting. 

\begin{propo}\label{prop:all-cofibrant}
  Suppose that \cv is a monoidal model category with cofibrant unit, and that the \cv-natural transformation $q\colon Q\to 1$ exhibits $Q\colon\cv\to\cv$ as a cofibrant replacement \cv-functor. Then all objects of \cv are cofibrant.
\end{propo}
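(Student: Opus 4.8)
The plan is to show that every object $A$ is a retract of the cofibrant object $QA$; since the cofibrant objects in any model category are closed under retracts, this forces $A$ to be cofibrant. The retraction $q_A\colon QA\to A$ is already given, so the real work is to produce a section $\sigma\colon A\to QA$ with $q_A\sigma=\id_A$. Such a section cannot be obtained by a naive lifting, because $A$ is not yet known to be cofibrant; the point is that the enrichment of $Q$ lets us transport a section defined at the unit $I$ to a section at an arbitrary $A$.

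First I would treat the unit. Since $q\colon Q\to 1$ exhibits $Q$ as a cofibrant replacement, $q_I\colon QI\to I$ is a trivial fibration, and $I$ is cofibrant by hypothesis; lifting $\id_I$ through $q_I$ produces a genuine section $s\colon I\to QI$ with $q_I s=\id_I$. Next, using that $Q$ is a \cv-functor together with the canonical isomorphism $\cv(I,X)\cong X$ (valid since $I$ is the unit), I define $\sigma$ as the composite
$$\xymatrix{
A\cong\cv(I,A)\ar[r]^-{Q_{I,A}} & \cv(QI,QA)\ar[r]^-{\cv(s,QA)} & \cv(I,QA)\cong QA }$$
where $Q_{I,A}$ is the action of $Q$ on hom-objects. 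It then remains to check $q_A\sigma=\id_A$. Post-composing with $q_A$ amounts to applying $\cv(I,q_A)$, and since the internal hom is a bifunctor we have $\cv(I,q_A)\circ\cv(s,QA)=\cv(s,A)\circ\cv(QI,q_A)$. The $\cv$-naturality of $q$ (whose square reads $\cv(QI,q_A)\circ Q_{I,A}=\cv(q_I,A)$) then collapses the composite to $\cv(s,A)\circ\cv(q_I,A)=\cv(q_I s,A)=\cv(\id_I,A)$, which is the identity on $\cv(I,A)\cong A$. Hence $q_A\sigma=\id_A$, so $A$ is a retract of $QA$ and is therefore cofibrant.

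The main obstacle is the middle step: recognizing that the single enriched-functoriality morphism $Q_{I,A}\colon\cv(I,A)\to\cv(QI,QA)$, read through the isomorphism $\cv(I,-)\cong\id$, is exactly what spreads the unit-level section $s$ to a section of $q_A$ at every $A$. Once the composite $\sigma$ is written down, the verification is a routine diagram chase from the bifunctoriality of $\cv(-,-)$ and the $\cv$-naturality of $q$; the only genuinely model-categorical inputs are the existence of $s$ (cofibrancy of $I$ plus $q_I$ a trivial fibration) and the closure of cofibrant objects under retracts.
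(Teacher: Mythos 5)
Your proof is correct and is essentially the same as the paper's: both extract a section $s\colon I\to QI$ from cofibrancy of the unit, use the enriched action $Q_{I,A}\colon\cv(I,A)\to\cv(QI,QA)$ followed by $\cv(s,QA)$ to produce a section of $q_A$ under the identification $\cv(I,-)\cong\operatorname{id}$, and verify it by the same bifunctoriality-plus-$\cv$-naturality diagram chase. Nothing to add.
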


\proof
Since $I$ is cofibrant, there exists a section $s\colon I\to QI$ to $q\colon QI\to I$.
Let $X$ be an arbitrary object. Since $Q$ is a \cv-functor, it acts on the internal hom $[I,X]$ as a map $Q\colon [I,X]\to[QI,QX]$. Now in the diagram
$$\xymatrix{
[I,X] \ar[r]^-{Q} \ar[dr]_{[q,1]} & [QI,QX] \ar[r]^{[s,QX]} \ar[d]^{[QI,q]} & [I,QX] \ar[d]^{[I,q]} \\
& [QI,X] \ar[r]_{[s,X]} & [I,X] }$$
the triangular region commutes by naturality of $q$, and the rectangular region commutes by associativity of composition. The lower composite $[s,X][q,X]$ is $[qs,X]$ which is the identity. Thus the vertical map $[I,q]\colon[I,QX]\to[I,X]$ has a section; but up to isomorphism this is just $q\colon QX\to X$. This proves that $X$ is cofibrant.
\endproof

\bibliographystyle{plain}

\begin{thebibliography}{10}

\bibitem{AR}
Ji{\v{r}}{\'{\i}} Ad{\'a}mek and Ji{\v{r}}\'\i\xspace Rosick{\'y}.
\newblock {\em Locally presentable and accessible categories}, volume 189 of
  {\em London Mathematical Society Lecture Note Series}.
\newblock Cambridge University Press, Cambridge, 1994.

\bibitem{Albert-Kelly}
M.~H. Albert and G.~M. Kelly.
\newblock The closure of a class of colimits.
\newblock {\em J. Pure Appl. Algebra}, 51(1-2):1--17, 1988.

\bibitem{Ba}
Clark Barwick.
\newblock On left and right model categories and left and right {B}ousfield
  localizations.
\newblock {\em Homology, Homotopy Appl.}, 12(2):245--320, 2010.

\bibitem{Bk} 
Tibor Beke.
\newblock Sheafifiable homootpy model categories.
\newblock {\em Math. Proc. Cambr. Phil. Soc.} 129:447--475, 2000.

\bibitem{BM}
Clemens Berger and Ieke Moerdijk.
\newblock On the homotopy theory of enriched categories.
\newblock {\em Quart. J. Math.}, 64:805--846, 2013.

\bibitem{Be}
Julia~E. Bergner.
\newblock A model category structure on the category of simplicial categories.
\newblock {\em Trans. Amer. Math. Soc.}, 359(5):2043--2058, 2007.

\bibitem{Be1}
Julia~E. Bergner.
\newblock A survey of {$(\infty,1)$}-categories.
\newblock In {\em Towards higher categories}, volume 152 of {\em IMA Vol. Math.
  Appl.}, pages 69--83. Springer, New York, 2010.

\bibitem{BKPS}
G.~J. Bird, G.~M. Kelly, A.~J. Power, and R.~H. Street.
\newblock Flexible limits for {$2$}-categories.
\newblock {\em J. Pure Appl. Algebra}, 61(1):1--27, 1989.

\bibitem{Bo} 
John Bourke.
\newblock A colimit decomposition for homotopy algebras in {C}at.
\newblock {\em Appl. Categ. Structures}, 22(1):13--28, 2014. 

\bibitem{C}
Denis-Charles Cisinski.
\newblock Th\'eories homotopiques dans les topos.
\newblock {\em J. Pure Appl. Algebra}, 174(1):43--82, 2002.

\bibitem{D}
Daniel Dugger.
\newblock Combinatorial model categories have presentations.
\newblock {\em Adv. Math.}, 164(1):177--201, 2001.

\bibitem{GuillouMay-enrichedhomotopy} 
\newblock Bertrand Guillou and J.P. May.
\newblock {\em Enriched model categories and presheaf categories}, 
\newblock preprint, arXiv:1110.3567, 2011.


\bibitem{H}
Philip~S. Hirschhorn.
\newblock {\em Model categories and their localizations}, volume~99 of {\em
  Mathematical Surveys and Monographs}.
\newblock American Mathematical Society, Providence, RI, 2003.

\bibitem{Ho}
Mark Hovey.
\newblock {\em Model categories}, volume~63 of {\em Mathematical Surveys and
  Monographs}.
\newblock American Mathematical Society, Providence, RI, 1999.

\bibitem{K}
G.~M. Kelly.
\newblock Structures defined by finite limits in the enriched context. {I}.
\newblock {\em Cahiers Topologie G\'eom. Diff\'erentielle}, 23(1):3--42, 1982.



\bibitem{vcat}
G.~M. Kelly and Stephen Lack.
\newblock \cv-{C}at is locally presentable or locally bounded if \cv  is so.
\newblock {\em Theory Appl. Categ.}, 8:555--575, 2001.

\bibitem{La2}
Stephen Lack.
\newblock A {Q}uillen model structure for 2-categories.
\newblock {\em $K$-Theory}, 26(2):171--205, 2002.

\bibitem{La}
Stephen Lack.
\newblock Homotopy-theoretic aspects of 2-monads.
\newblock {\em J. Homotopy Relat. Struct.}, 2(2):229--260, 2007.

\bibitem{La1}
Stephen Lack.
\newblock A 2-categories companion.
\newblock In {\em Towards higher categories}, volume 152 of {\em IMA Vol. Math.
  Appl.}, pages 105--191. Springer, New York, 2010.

\bibitem{La3}
Stephen Lack.
\newblock A {Q}uillen model structure for {G}ray-categories.
\newblock {\em Journal of K-theory}, 8(2):183--221, 2011.

\bibitem{LR1}
Stephen Lack and Ji{\v{r}}{\'{\i}} Rosick{\'y}.
\newblock Enriched weakness.
\newblock {\em J. Pure Appl. Algebra}, 216(8-9):1807--1822, 2012.

\bibitem{L}
Jacob Lurie.
\newblock {\em Higher topos theory}, volume 170 of {\em Annals of Mathematics
  Studies}.
\newblock Princeton University Press, Princeton, NJ, 2009.

\bibitem{MP}
Michael Makkai and Robert Par{\'e}.
\newblock {\em Accessible categories: the foundations of categorical model
  theory}, volume 104 of {\em Contemporary Mathematics}.
\newblock American Mathematical Society, Providence, RI, 1989.

\bibitem{M}
F~Muro.
\newblock Dwyer-Kan homotopy theory of enriched categories.
\newblock arXiv:1201.1575, 2012.

\bibitem{R1}
J.~Rosick{\'y}.
\newblock On homotopy varieties.
\newblock {\em Adv. Math.}, 214(2):525--550, 2007.

\bibitem{R2}
J.~Rosick{\'y}.
\newblock Corrigendun to ``On homotopy varieties".
\newblock {\em Adv. Math.}, 259:841--842, 2014.


\bibitem{R}
Ji{\v{r}}{\'{\i}} Rosick{\'y}.
\newblock Generalized {B}rown representability in homotopy categories.
\newblock {\em Theory Appl. Categ.}, 14:no. 19, 451--479, 2005.

\bibitem{RT}
J. Rosick{\'y} and W. Tholen.
\newblock Left-determined model categories and universal homotopy theories.
\newblock{\em Trans. Amer. math. Soc.}, 355:no. 9, 3611--3623, 2003.

\bibitem{SS}
Stefan Schwede and Brooke~E. Shipley.
\newblock Algebras and modules in monoidal model categories.
\newblock {\em Proc. London Math. Soc. (3)}, 80(2):491--511, 2000.

\bibitem{S}
Michael Shulman.
\newblock Homotopy limits and colimits and enriched homotopy theory.
\newblock arXiv:math/0610194v3, 2009.

\bibitem{V}
L.~Vok\v r\'\i{}nek.
\newblock Homotopy weighted colimits.
\newblock arXiv:1201.2970, 2012.


\end{thebibliography}

\end{document}